\newtheoremstyle{theorem}
  {10pt}		  
  {10pt}  
  {\sl}  
  {\parindent}     
  {\bf}  
  {. }    
  { }    
  {}     
\theoremstyle{theorem}
\newtheorem{theorem}{Theorem}
\newtheorem{corollary}[theorem]{Corollary}
\newtheorem{proposition}[theorem]{Proposition}
\newtheorem{remark}[theorem]{Remark}
\newtheorem{example}[theorem]{Example}
\newtheorem{lemma}[theorem]{Lemma}
\newtheoremstyle{defi}
  {10pt}		  
  {10pt}  
  {\rm}  
  {\parindent}     
  {\bf}  
  {. }    
  { }    
  {}     
\theoremstyle{defi}
\newtheorem{definition}[theorem]{Definition}
\newcommand{\naturals}{\mathbb{N}}
\newcommand{\Right}{\mathfrak{R}}
\newcommand{\Left}{\mathfrak{L}}
\newcommand{\maxs}{\ell}
\newcommand{\nul}{{\rm nul}}
\newcommand{\defic}{{\rm def}}
\newcommand{\RS}{{\rm RS}}
\newcommand{\NS}{{\rm NS}}
\newcommand{\Ker}{{\rm Ker}}
\newcommand{\Img}{{\rm Im}}
\newcommand{\Class}{\mathbb{L}}
\newcommand{\card}{{\rm card}}
\newcommand{\Span}{{\rm span}}
\newcommand{\RFM}{\rm \mathbb{R}\mathbb{F}\mathbb{M}_\omega({\mathcal{F}})}
\newcommand{\Hom}{{\rm Hom}_{\mathcal{F}}({\mathcal{F}}^{(\omega)})}
\begin{document}

\bibliographystyle{plain}

\title{The Infinite Gauss-Jordan Elimination on \\ Row-Finite $\omega\times\omega$ Matrices}

\author{A. G. Paraskevopoulos\\
\small The Center for Research and Applications of Nonlinear Systems (CRANS)\\
\small Department of Mathematics, Division of Applied Analysis,\\
\small University of Patras, 26500 Patra, Greece
}

\maketitle

\begin{abstract} The Gauss-Jordan elimination algorithm is extended to reduce a row-finite $\omega\times\omega$ matrix to lower row-reduced form, founded on a strategy of rightmost pivot elements. Such reduced matrix form preserves row equivalence, unlike the dominant (upper) row-reduced form. This algorithm provides a constructive alternative to an earlier existence and uniqueness result for Quasi-Hermite forms based on the axiom of countable choice. As a consequence, the general solution of an infinite system of linear equations with a row-finite coefficient $\omega\times\omega$ matrix is fully constructible.\\

{\bf AMS Subject Classification:} 15A21, 16S50, 65F05\\

{\bf Key Words and Phrases:}
Infinite Gauss-Jordan algorithm, Gauss-Jordan elimination, Gaussian elimination, Infinite elimination, Row-finite matrix, Row reduced form, Hermite form, Infinite System.
\end{abstract}

\section{Introduction} \label{intro-sec} 
The Gaussian elimination, the fundamental tool in reducing matrix problems to simpler ones, from the ancient times (see~\cite{Ha:Ch}) up to the modern age (see~\cite{Gr:GE}) is dominated by strategies founded on leftmost pivot elements. When it is supplemented with Jordan elimination, it generates the upper row-reduced echelon form (URREF) of a matrix. This is the most widespread row canonical form of finite matrices, within the meaning that every finite matrix is reducible to a unique URREF, no matter what method is used. Even though the lower row-reduced echelon form (LRREF) is also row canonical, the URREF has appropriated the entire use of the term``row-reduced echelon form''. As it turns out, however, leftmost pivot strategies (LPS) along with matrices in URREF are serious obstacles in extending the Gauss-Jordan elimination to cover the algebra $\RFM$ of  row-finite $\omega\times\omega$ matrices over a field $\mathcal{F}$.

In all that follows, the shorthand notation LRRF (resp. URRF) stands for a lower (resp. upper) row-reduced form, without requiring the echelon conditions. Following earlier work of Toeplitz~\cite{To:Auf}, Fulkerson introduced in~\cite{Fu:Th}\footnote{I thank Travis D. Warwick, Librarian of the Stephen Cole Kleene Mathematics Library at University of Wisconsin - Madison, for making Fulkerson's Thesis available to me.} the notion ``Quasi-Hermite form" (QHF)  of a row-finite $\omega\times\omega$ matrix, establishing there existence and ``almost" uniqueness results largely ignored for a long period. These results, briefly presented in Section \ref{sec:PreliminaryNotationAndResults}, are one of the main sources of motivation of this study. As shown in Section \ref{sec:RowFiniteMatricesInLRRF}, a QHF is a row-finite $\omega\times\omega$ matrix in LRRF, the nonzero rows of which are of strictly increasing row-length. Let $C\in \RFM$. By the ``existence of a QHF of $C$" it is meant that there is a QHF, say $H\in \RFM$, such that $C$ and $H$ are left associates (or row equivalent). Fulkerson's proof on the existence of a QHF of $C$, as implicitly stated in~\cite[Theorem 3.1]{Fu:Th}, necessarily involves the axiom of countable choice (see Section \ref{sec:PreliminaryNotationAndResults}). It enabled him to overcome the lack of a certain rule for choosing a complete basis of row-length, say $\mbox{\boldmath$\mathrm{A}$}=(\mbox{\boldmath$\mathrm{A}$}_{k})_k$, from the length-equivalence classes of the row space of $C$. The nonzero rows of $H$ are connected by a recurrence relation with the mediation of elements of $\mbox{\boldmath$\mathrm{A}$}$. Because of the non-constructive nature of the axiom of countable choice, the presence of this axiom prior to the recurrence, reduces the constructibility of this recurrence to cases in which a complete basis of row-length is given in advance by $C$ (see Examples \ref{example2}, \ref{example3}). By way of contrast, the algorithm proposed here provides a certain rule for choosing a complete basis of row-length (see Propositions \ref{prop4}, \ref{prop6}), establishing the constructiveness of QHF of $C$.

The implementation of the Gauss-Jordan elimination with rightmost pivots is referred to as rightmost pivot strategy (RPS). The infinite Gauss-Jordan algorithm (see Section \ref{sec:TheInfiniteGauss-JordanEliminationAlgorithm}) consists of an effective routine implemented with RPS generating a chain of matrices with finite number of rows in LRRF. It starts with the reduction of the top submatrix of an arbitrary $C\in \RFM$ extended up to and including the second nonzero row of $C$, and then repeatedly apply to reduce matrices comprising the previously reduced matrix augmented by successive rows of $C$ one at a time and ad infinitum. 
This avoids the apparent difficulty due to the direct application of the Gauss Jordan algorithm to the entire $C$, which generally requires an infinite number of row operations to eliminate the nonzero entries of a pivot column. The question now turns to whether the above-mentioned infinite chain of row-reduced matrices, upon algorithm completion, ends up as a matrix in LRRF row equivalent to $C$.

The infinite Gauss-Jordan algorithm implemented with LPS has consequences in the row reduction of $\omega\times\omega$ integer valued row-finite matrices modulo a prime power (see \cite{Os:Pr}). As a general strategy, however (see Example \ref{example1}), LPS forces us to choose from two undesirable alternatives: Either that the infinite elimination leads to an undecidable problem or that it ends up as a matrix in URRF, but not preserving row equivalence. Apart from the case of invertible row-finite matrices (see the reference previously cited), it is shown here that there is no row equivalent URRF for an extensive class of row-finite matrices. However, the implementation of the infinite Gauss-Jordan algorithm with priority to RPS makes it possible to overcome the impasse regarding LPS. It is proved in Theorem \ref{main-theorem} that as the process proceeds, after a sufficiently large but finite number of algorithmic steps, every row remains invariant. As a consequence, upon algorithm completion, a row equivalent matrix in LRRF is accomplished (see Section \ref{sec:TheInfiniteGJEliminationAndTheRowEquivalence}). It is further shown the existence of QHF, without invoking the countable axiom of choice. 

Matrices in LRREF require that all zero rows are grouped on the top (resp. bottom) of the matrix. Thus, the LRREF of $C\in \RFM$, having infinite nullity (resp. rank) and nonzero rank (resp. nullity), must have at least the ordinal $\omega+1$ as row indexing set (see Example \ref{example grouping zero rows}), thus not belonging to the algebra $\RFM$. Within this algebra, however, an almost uniqueness result is feasible, as established in~\cite{Fu:Th} for QHFs. If the nullity of $C$ is finite a complete uniqueness result for the LRREF of $C$ follows directly (see Example \ref{example3}).

The full replacement of the axiom of countable choice by the infinite Gauss-Jordan algorithm implemented with RPS enables us to infer the existence of a LRRF and a QHF from the constructiveness of these forms, just as in the finite dimensional case. In Section \ref{sec:ConstructionOfQuasiHermiteForms}, the infinite Gauss-Jordan algorithm is supplemented with a reordering routine for constructing a chain of submatrices of a QHF of the original matrix. Moreover, an alternative algorithmic scheme is proposed, which establishes links with the currently developed software.

A remarkable application of the algorithmic process, proposed in this paper, concerns the construction of the general solution for infinite systems of linear equations associated with an arbitrary row-finite coefficient $\omega\times\omega$ matrix, which is discussed in the last Section of this paper.

\section{Preliminary Notation and Results}
\label{sec:PreliminaryNotationAndResults}
Throughout this paper, the set of natural numbers $\naturals=\{0,1,2,...\}$ endowed with the standard well ordering by magnitude is identified, as usual, with the ordinal number $\omega$. Also, $\mathcal{F}$ stands for an algebraic field and ${\mathcal{F}}^{(\omega)}$ for the vector space of sequences in $\mathcal{F}$ of only finitely many nonzero terms. The canonical basis of ${\mathcal{F}}^{(\omega)}$ is denoted by $\mbox{\boldmath $\mathrm{E}$}=\{\mbox{\boldmath $\mathrm{e}$}_i\}_{i\in \omega}$, where $\mbox{\boldmath $\mathrm{e}$}_i=(\delta_{ij})_{j\in\omega}$ and $\delta_{ij}$ is the Kronecker delta function. 
A row-finite $\omega\times\omega$ matrix $A$ is a sequence of vectors in ${\mathcal{F}}^{(\omega)}$ indexed by $\omega$ and denoted by $A=(A_k)_{k\in \omega}=(a_{ki})_{(k,i) \in \omega \times \omega}$. Also, $\textbf{I}=(\mbox{\boldmath $\mathrm{e}$}_i)_{i\in \omega}$ stands for the identity $\omega\times\omega$ matrix.
A row $A_k\in {\mathcal{F}}^{(\omega)}$ of $A$ is written as:
\begin{equation} \label{equation_0}
A_k=(a_{k0}, a_{k1},...,a_{k\rho_{k}},0,0...) = \sum^{\rho_k}_{i=0} a_{ki}\mbox{\boldmath $\mathrm{e}$}_i
\end{equation} 
Using set notation, we write $A=\{(k,A_k): k\in \omega, A_k\in {\mathcal{F}}^{(\omega)}\}\subset \omega\times {\mathcal{F}}^{(\omega)}$ by means of which repeated rows become distinguishable. The set $({{\mathcal{F}}^{(\omega)}})^\omega$ of row-finite $\omega\times\omega$ matrices, equipped with the matrix addition and multiplication by scalars, turns into a linear space over $\mathcal{F}$. If $({{\mathcal{F}}^{(\omega)}})^\omega$ is further equipped with matrix multiplication, linear and ring structures turn $({{\mathcal{F}}^{(\omega)}})^\omega$ into an associative and noncommutative algebra with identity element the matrix $\textbf{I}$, denoted by $\RFM$. For a general reference on nontrivial definitions and results on matrix rings over arbitrary indexing sets, the reader is referred to \cite{AF:rings} and to the literature cited therein.

Let $A\in \RFM$ and $A_{k}=(a_{ki})_{i\in \omega}$ be a row of $A$. Following Toeplitz~\cite{To:Auf}, in view of (\ref{equation_0}), the index $\rho_{k}$ is called \emph{length}\footnote{The ``length'' can also be viewed as the distance of $\rho_{k}$ from the zero-index, which justifies this terminology. However, this quantity is an element of $\omega$, thus being itself a finite ordinal. This is not to be confused with the ``length of a vector'' defined as a Euclidean distance.} (or \emph{rightmost index}) of $A_{k}$ and alternatively will be denoted by $\maxs(A_{k})$.
The \emph{rightmost} nonzero coefficient $a_{k \rho_{k}}$ of $A_{k}$ will be also termed \emph{right leading coefficient} of $A_{k}$. Certainly $\maxs(\mbox{\boldmath $\mathrm{e}$}_i)=i$ for $i\in \omega$, thus $\maxs(\mbox{\boldmath $\mathrm{e}$}_0)=0$. As $\maxs(\textbf{0})$ does not exist in $\omega$, the convention $\maxs(\textbf{0})=-1$ is adopted. Let $\mathfrak{X}\subset {\mathcal{F}}^{(\omega)}$. The set $\Right(\mathfrak{X})=\{\maxs(x):\ x\in \mathfrak{X}\}$ will be called set of \emph{rightmost index} of $\mathfrak{X}$. Let $F$ be a sequence of nonzero vectors of ${\mathcal{F}}^{(\omega)}$ indexed by $J\subset \omega$. Evidently the \emph{map of rightmost index} of $F$, that is $\rho: J\mapsto \Right(F):\rho(j)=\maxs(F_j)$, is surjective and defines the sequence of \emph{rightmost index} of $F$ denoted by $\rho=(\rho_{j})_{j\in J}$. If $\rho$ is injective, then the indexed set notation $\{\rho_{j}\}_{j\in J}$ is used along with the standard identification of $\{\rho_{j}\}_{j\in J}$ with $\Right(F)$, when the former is treated as the image set of $\rho$. By analogy we define the \emph{leftmost index} of $A_k$ denoted by $\zeta(A_k)$, $k\in J$ or simply $\zeta_k$ along with the \emph{left leading coefficient} $a_{k\zeta_{k}}$ of $A_k$.
The convention $\zeta(\textbf{0})=-1$ is also adopted. The sequence and the set of leftmost index of $A$ are denoted by $\zeta=(\zeta_{j})_{j\in J}$ and $\Left(A)$, respectively. From now onwards, the indexing sets of zero and nonzero rows of $A\in \RFM$ are denoted by $W, J$ respectively.
\paragraph{Matrix Representations of Linear Mappings on ${\mathcal{F}}^{(\omega)}$.}
\label{sec:MatrixRepresentationsOfLinearMappings} 
In all that follows the space of linear mappings from ${\mathcal{F}}^{(\omega)}$ to ${\mathcal{F}}^{(\omega)}$ over ${\mathcal{F}}$ will be denoted by $\Hom$.  Let $f\in \Hom$. The \emph{kernel} and the \emph{range} of $f$ will be denoted by $\Ker(f)$ and $\Img(f)$ respectively. By ${\mathcal{X}}=\{\chi_k\}_{k\in\omega}$, $\mbox{\boldmath $\mathrm{B}$}=\{\mbox{\boldmath $\mathrm{b}$}_i\}_{i\in\omega}$, we shall refer to Hamel bases (or bases) of the domain and codomain of $f$ respectively. The matrix representation of $f$, relative to $({\mathcal{X}}, \mbox{\boldmath $\mathrm{B}$})$, is a row-finite matrix  $A=(a_{ki})_{(k,i) \in \omega \times \omega}$ with entries determined by $f(\chi_{k})=\sum_i a_{ki} \mbox{\boldmath $\mathrm{b}$}_i$ and we shall referred to it as $A=[f]^{\mbox{\scriptsize \boldmath $\mathrm{B}$}}_{{\mathcal{X}}}$. On the other hand, if $A\in\RFM$, there is a unique linear extension of $\chi_{k}\mapsto\sum_i a_{ki} \mbox{\boldmath 
$\mathrm{b}$}_i$ to $\Hom$, call it $f$, such that $A=[f]^{\mbox{\scriptsize \boldmath $\mathrm{B}$}}_{{\mathcal{X}}}$. A row $A_k$ of $A$ represents the element $f(\chi_k)$, relative to $\mbox{\boldmath $\mathrm{B}$}$\vspace{0.02in}. 
 
The matrix multiplication on $\RFM$ is defined in complete analogy with finite matrices, resulting in a matrix in $\RFM$. 
If $\xi\in {\mathcal{F}}^{(\omega)}$, the mapping $\alpha: {\mathcal{F}}^{(\omega)}\ni \xi\mapsto \alpha(\xi)=\xi\cdot A \in {\mathcal{F}}^{(\omega)}$ will be called the \emph{induced linear mapping} by $A$. In this setting, $A$ is treated as a right operator and $A=[\alpha]^{\mbox{\scriptsize \boldmath $\mathrm{E}$}}_{\mbox{\scriptsize \boldmath $\mathrm{E}$}}$, since 
\[\alpha(\mbox{\boldmath $\mathrm{e}$}_{k})=\mbox{\boldmath $\mathrm{e}$}_{k}\cdot A=(a_{k0}, a_{k1},...,a_{k\rho_{k}},0,0...)= \sum^{\rho_k}_{i=0} a_{ki}\mbox{\boldmath $\mathrm{e}$}_i.\] 

\paragraph{Change of Bases and Left Association.}
\label{sec:ChangeOfBasesAndRowEquivalence}
If a left and a right inverse of $A\in\RFM$ exist in $\RFM$, then they necessarily coincide defining the two-sided inverse of $A$ and $A$ is said to be \emph{non-singular or invertible}. In this setting, the existence of a left (resp. right) inverse entails that the induced linear mapping is surjective (resp. injective), whence the following Proposition:
\begin{proposition} \label{proposition_1.0} Let $\alpha\in \Hom$. Then $\alpha$ is an isomorphism if and only if $A=[\alpha]^{\mbox{\scriptsize \boldmath $\mathrm{E}$}}_{\mbox{\scriptsize \boldmath $\mathrm{E}$}}$ is non-singular.
\end{proposition}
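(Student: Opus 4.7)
\emph{Proof proposal.} My plan is to treat the two implications separately; both follow rather quickly from the setup and the remarks recorded in the paragraph immediately preceding the statement.

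For the easier direction, I would assume that $A$ is non-singular in $\RFM$, with two-sided inverse $B$. Then $B$ serves simultaneously as a left inverse and as a right inverse of $A$, so by the observation stated just before the proposition, the induced linear map $\alpha$ is both surjective and injective, hence an isomorphism in $\Hom$.

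For the converse, suppose $\alpha\in \Hom$ is an isomorphism, and set $\beta:=\alpha^{-1}$. Then $\beta$ is an $\mathcal{F}$-linear endomorphism of ${\mathcal{F}}^{(\omega)}$, hence it belongs to $\Hom$. Define $B:=[\beta]^{\mbox{\scriptsize \boldmath $\mathrm{E}$}}_{\mbox{\scriptsize \boldmath $\mathrm{E}$}}$. Because the codomain of $\beta$ is ${\mathcal{F}}^{(\omega)}$, each $\beta(\mbox{\boldmath $\mathrm{e}$}_k)$ is finitely supported, so every row of $B$ has finite support and $B\in \RFM$. I would then invoke the functoriality of matrix representation under the right-operator convention that has been adopted: since $\alpha(\xi)=\xi\cdot A$ and $\beta(\xi)=\xi\cdot B$ for $\xi\in{\mathcal{F}}^{(\omega)}$, one computes $(\beta\circ\alpha)(\xi)=\xi(AB)$ and $(\alpha\circ\beta)(\xi)=\xi(BA)$. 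The identities $\beta\circ\alpha=\mathrm{id}=\alpha\circ\beta$ then translate, via comparison with $\textbf{I}=[\mathrm{id}]^{\mbox{\scriptsize \boldmath $\mathrm{E}$}}_{\mbox{\scriptsize \boldmath $\mathrm{E}$}}$, into $AB=BA=\textbf{I}$, so $A$ is non-singular with inverse $B$.

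The only point needing a moment's attention is verifying $B\in \RFM$, i.e. that the inverse map does not generate rows of infinite support; this is immediate because, by hypothesis, the range of $\beta$ already lies in ${\mathcal{F}}^{(\omega)}$. Beyond that, the argument is purely formal, since the right-operator convention has been set up precisely so that composition of maps corresponds to matrix multiplication in the standard order. Hence I do not anticipate any substantial obstacle: the proposition is essentially a compact restatement of the two sentences preceding it, and the proof mostly amounts to making the matrix-map correspondence explicit in both directions.
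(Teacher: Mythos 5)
Your proof is correct and follows essentially the same route as the paper, which offers no formal proof but derives the statement from the remark that a left (resp.\ right) inverse of $A$ forces the induced map to be surjective (resp.\ injective) --- exactly your first direction. Your converse direction, passing to $B=[\alpha^{-1}]^{\mbox{\scriptsize \boldmath $\mathrm{E}$}}_{\mbox{\scriptsize \boldmath $\mathrm{E}$}}$ and checking $AB=BA=\textbf{I}$ under the right-operator convention, is the standard completion that the paper leaves implicit, and your computation of the order of the products is correct.
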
 
If $\mathfrak{X}\subset {\mathcal{F}}^{(\omega)}$ spans a subspace $\mathcal{Z}$ of ${\mathcal{F}}^{(\omega)}$, we shall write $\Span(\mathfrak{X})={\mathcal{Z}}$.
The \emph{null space} and the \emph{row space} of $A\in \RFM$ are defined by $\NS(A)=\Ker(\alpha)=\{\xi\in {\mathcal{F}}^{(\omega)}: \xi\cdot A=0\}$ and $\RS(A)=\Img(\alpha)=\Span(A_k)_{k\in \omega}$. The Hamel dimensions of these spaces are called \emph{nullity} and \emph{rank} of $A$, respectively, writing $\nul(A)={\rm dim}(\NS(A))$, ${\rm rank}(A)={\rm dim}(\RS(A))$. Let $f\in \Hom$ and $A=[f]^{\mbox{\scriptsize \boldmath $\mathrm{B}$}}_{{\mathcal{X}}},\; B=[f]^{\mbox{\scriptsize \boldmath $\mathrm{B}$}}_{{\mathcal{X}}^*}$, where ${\mathcal{X}^*}=\{\chi^*_k\}_{k\in\omega}$ is another basis of ${\mathcal{F}}^{(\omega)}$. Let also $\phi:{\mathcal{X}}\mapsto {\mathcal{X}^*}$ be defined by \vspace{0.02in}$\chi^*_k=\phi(\chi_k)$ and $\phi(\chi_k)=\sum_i q_{ki}\chi_i$. As the unique linear extension of the bijection $\phi$, say $\varphi:{\mathcal{F}}^{(\omega)}\mapsto {\mathcal{F}}^{(\omega)}$, is an isomorphism, the matrix $Q=(q_{ki})_{(k,i)\in \omega\times\omega}=[\varphi]^{{\mathcal{X}}}_{{\mathcal{X}}}$ is non-singular, called the \emph{matrix of passage} from the basis ${\mathcal{X}}$ to the basis ${\mathcal{X}}^*$. This passage of bases causes a change of the matrix representation of $f$ from $A$ to $B$, for a fixed basis $\mbox{\boldmath $\mathrm{B}$}$ of the codomain space, described by
\begin{equation}\label{change of basis}
Q\cdot A=B.
\end{equation}
Let $Q\in \RFM$ be non-singular such that (\ref{change of basis}) holds. Then $A,B$ is said to be \emph{left associates} or \emph{row-equivalent} and we shall write $A\sim B$. Formally ``$\sim$" is an equivalence relation on $\RFM$, which generalizes the prevailing notion of ``row-equivalence" for finite matrices. A change of the domain basis gives rise to row equivalent matrix representations of a linear mapping and visa versa.
\paragraph{Permutation Matrices and a Rearrangement Theorem.}
\label{sec:PermutationMatrices}
Given a bijection $\sigma :\omega \mapsto \omega$, the permutation matrix $P=(p_{ij})_{(i,j)\in\omega\times\omega}$ in $\RFM$ is defined by: $p_{ij}=0$, if $j\not=\sigma_i$, or $p_{i\sigma_i}=1$. Pre-multiplying $A\in \RFM$ by $P$ a permutation of the rows of $A$ occurs. Let $\sigma, \mu$ be bijections on $\omega$ and $P_{\sigma},P_{\mu}$ be the corresponding permutation matrices. Then $P_{\sigma}\cdot P_{\mu} = P_{\mu\circ\sigma}$ and $P_{\sigma^{-1}}=P^{-1}_{\sigma}$. Thus, a permutation matrix is non-singular, and its inverse is a permutation matrix too.
 
\newcounter{nroman}
Let $J=\{j_{0},j_{1},...\}$ be an infinite subset of $\omega$ such that $j_0<j_1<...$. Let also $\phi: J\mapsto \omega$ be injective and $K=\{\varphi_{j_0},\varphi_{j_1},...\}$ (the set of images of $\varphi$). The mapping $\sigma$ is defined by the following inductive process: 
\begin{list} {\roman{nroman})}{\usecounter{nroman}\setlength{\rightmargin}{\leftmargin}}
  \item  The first element, $j_0$, of $J$ is mapped to $k_0\in J$ ($k_0=\sigma(j_0)$) if and only if  $\varphi_{k_0}$ is the first element of $K$. 
	\item  The $n$-th element, $j_n$, of $J$ is mapped to $k_n\in J$ ($k_n=\sigma(j_n)$) if and only if  $\varphi_{k_n}$ is the first element of $K\setminus       \{\varphi_{k_0},  \varphi_{k_1},...,\varphi_{k_{n-1}}\}$.
\end{list}
Applying elementary arguments involving mathematical induction, we deduce that the map $J\ni j \mapsto \sigma(j)=\sigma_j\in J$ is bijective and 
the sequence $(\phi_{\sigma_i})_{i\in J}$, defined by the mapping $\phi\circ\sigma(i)$, $i\in J$, is strictly increasing. Hence, without invoking the countable axiom of choice, the following Theorem holds:
\begin{theorem} \label{Relabeling} Let $J\not=\emptyset$ and $J\subseteq \omega$. If $\phi: J\mapsto \omega$ is injective, there exists a bijection $\sigma: J \mapsto J$ such that the sequence $(\phi_{\sigma_i})_{i\in J}$ is strictly increasing.
\end{theorem}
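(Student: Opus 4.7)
The plan is to formalize the inductive construction sketched in the paragraph immediately preceding the theorem, and to stress that no form of choice is required, since at each step we select the \emph{least} element of a nonempty subset of $\omega$ rather than an arbitrary one. Let $K=\phi(J)\subseteq\omega$. Because $\omega$ carries the standard well-ordering, every nonempty subset of $K$ has a unique least element, and this will furnish the canonical enumeration needed.

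First I would set up the recursion. Define $\sigma(j_0)$ to be the unique $j\in J$ with $\phi(j)=\min K$; uniqueness follows from the injectivity of $\phi$. Having defined $\sigma(j_0),\ldots,\sigma(j_{n-1})$, put
\[
K_n = K\setminus\{\phi(\sigma(j_0)),\ldots,\phi(\sigma(j_{n-1}))\},
\]
and, provided $K_n\neq\emptyset$, let $\sigma(j_n)$ be the unique $j\in J$ with $\phi(j)=\min K_n$. If $J$ is finite this terminates, and if $J$ is infinite we obtain $\sigma$ on all of $J$ because $|K_n|=|J|-n$ by the injectivity of $\phi$, hence $K_n$ is never exhausted at a finite stage.

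Next I would check the three required properties. Injectivity of $\sigma$ follows from the fact that the sequence $(\phi(\sigma(j_n)))_n$ is strictly increasing by construction (each new value is the minimum of a strictly smaller set of remaining elements). For surjectivity in the infinite case, fix $j^{\ast}\in J$ and set $k^{\ast}=\phi(j^{\ast})\in K$; the set $\{k\in K:k\le k^{\ast}\}$ is a finite subset of $\omega$, say of cardinality $m+1$, and a routine induction shows that after $m+1$ steps $k^{\ast}$ must have been selected, so $j^{\ast}$ lies in the image of $\sigma$. Strict monotonicity of $(\phi_{\sigma_i})_{i\in J}$ is immediate from the recursion, since $\phi(\sigma(j_n))=\min K_n < \min K_{n+1}=\phi(\sigma(j_{n+1}))$.

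The main subtlety, and the whole point of including this theorem, is not a technical obstruction but a foundational one: one must be careful to observe that the recursion is genuinely effective, because at every stage the ``choice'' of $\sigma(j_n)$ is dictated by the well-ordering of $\omega$ (the minimum of $K_n$) rather than by an external selector. This is exactly what allows the subsequent algorithmic work in the paper to avoid the axiom of countable choice that was implicit in Fulkerson's treatment.
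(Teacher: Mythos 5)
Your proof is correct and takes essentially the same route as the paper's: the paper defines $\sigma$ by the identical inductive rule (send the $n$-th element of $J$ to the unique element of $J$ whose image under $\phi$ is the least remaining member of $K$) and relegates bijectivity and strict monotonicity to ``elementary arguments involving mathematical induction,'' which you have simply spelled out. Your closing remark about the minimum of $K_n$ replacing an external selector is precisely the point the paper makes in stating that the theorem holds ``without invoking the countable axiom of choice.''
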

\paragraph{Hermite Bases and Quasi-Hermite Forms.}
\label{sec:HermiteBases}
The existence and uniqueness results on QHFs of row-finite $\omega\times\omega$ matrices, attributed to Fulkerson \cite{Fu:Th}, are based on the notion of Hermite bases of subspaces of ${\mathcal{F}}^{(\omega)}$. 
\begin{definition} \label{definition-Hermite basis} 
A family $\mbox{\boldmath $\mathrm{H}$}=(\mbox{\boldmath $\mathrm{h}$}_{j})_{j\in J}$ of vectors of ${\mathcal{F}}^{(\omega)}$, $J\subseteq \omega$, is a \emph{Hermite basis} of a subspace ${\mathcal{Z}}$ of ${\mathcal{F}}^{(\omega)}$, if $\mbox{\boldmath $\mathrm{H}$}$ fulfills the subsequent conditions:
(i) $\mbox{\boldmath $\mathrm{H}$}$ spans ${\mathcal{Z}}$.
(ii) The sequence $\rho=(\rho_{j})_{j\in J}$ of rightmost index of $\mbox{\boldmath $\mathrm{H}$}$ is strictly increasing.
(iii) $h_{j\rho_{j}}=1$ for all $j\in J$ (The rightmost coefficients are ones).
(iv) If $k\in \omega$ and $j\in J$ such that $j<k$, then $h_{k \rho_{j}}=0$ (All entries in the same column and below a right leading one are zero).
\end{definition}
Let ${\mathcal{Z}}$ be an infinite dimensional subspace of ${\mathcal{F}}^{(\omega)}$. Let also $\Class_\rho$ be the set of vectors in ${\mathcal{Z}}$ of fixed length $\rho\in \Right({\mathcal{Z}})$. The infinite family $\Class=\{\Class_\rho\}_{\rho\in \Right({\mathcal{Z}})}$ partitions ${\mathcal{Z}}$ into pairwise disjoint sets such that $\displaystyle {\mathcal{Z}}=(\bigcup_{\rho\in \Right({\mathcal{Z}})}\Class_\rho)\cup\{0\}$, which thus called \emph{set of length-equivalence classes} of ${\mathcal{Z}}$. Following Fulkerson~\cite{Fu:Th}, let us choose a representative vector $\mbox{\boldmath$\mathrm{A}$}_\rho$ from each $\Class_\rho$, thus forming a sequence $\mbox{\boldmath $\mathrm{A}$}=\{\mbox{\boldmath $\mathrm{A}$}_\rho\}_{\rho\in \Right({\mathcal{Z}})}$ in ${\mathcal{Z}}$. In this respect, one formally asserts the existence of an injective choice mapping $\Phi: \Class \mapsto \cup\Class$ such that $\Phi(\Class_\rho)=\mbox{\boldmath $\mathrm{A}$}_\rho\in \Class_\rho$, that is a standard form of the axiom of countable choice. Let  $\Right({\mathcal{Z}})=\{\rho_0,\rho_1,\rho_2,...\}$ with $\rho_0<\rho_1<\rho_2<...$, and $\mbox{\boldmath$\mathrm{A}$}_{\rho_j}=(\alpha_{j0},\alpha_{j1},...,\alpha_{j\rho_{j}},0,0,...)$. 
The sequence $\mbox{\boldmath $\mathrm{H}$}=\{\mbox{\boldmath $\mathrm{H}$}_{\rho_j}\}_{j\in \omega}$ in ${\mathcal{Z}}$ generated recursively by
\begin{equation} \label{rec-Hermite}
\left\{\begin{array}{lll} 
\mbox{\boldmath $\mathrm{H}$}_{\rho_0}&=&\alpha^{-1}_{0\rho_{0}}\mbox{\boldmath $\mathrm{A}$}_{\rho_0}\vspace{0.05in}\\
\mbox{\boldmath $\mathrm{H}$}_{\rho_j}&=&
\alpha^{-1}_{j\rho_{j}}(\mbox{\boldmath $\mathrm{A}$}_{\rho_j}-\alpha_{j\rho_{j-1}}\mbox{\boldmath $\mathrm{H}$}_{\rho_{j-1}}-...-\alpha_{j\rho_{0}}\mbox{\boldmath $\mathrm{H}$}_{\rho_{0}}) \end{array} \right.
\end{equation}
turns out to be a Hermite basis of ${\mathcal{Z}}$, thus establishing the existence of this type of bases. The uniqueness of $\mbox{\boldmath $\mathrm{H}$}$ follows directly from Definition \ref{definition-Hermite basis}. In view of (\ref{rec-Hermite}), $\maxs(\mbox{\boldmath $\mathrm{H}$}_{\rho_j})=\maxs(\mbox{\boldmath $\mathrm{A}$}_{\rho_j})=\rho_j$ for all $j\in \omega$, thus
\begin{equation} \label{cond-Hermite}
\Right({\mathcal{Z}})=\Right(\mbox{\boldmath $\mathrm{A}$})=\Right(\mbox{\boldmath $\mathrm{H}$}).
\end{equation}
Relation (\ref{rec-Hermite}) is to be compared with an analogous recurrence established in~\cite{Pa:reap}. The latter recurrence generalizes an earlier result devised by Ortiz~\cite{Or:tau}\footnote{Ortiz' recursive formulation of the Lanczos' tau method was the starting landmark for a considerable amount of work in the area of polynomial approximations to differential problems, centered at Imperial college for more than twenty years. The author of this paper has had the fortune to work in this area under the supervision of Prof. Eduardo Ortiz.} generating the so-called canonical polynomials.

\begin{definition} \label{definition-Quasi-Hermite-form} \rm {$H\in \RFM$ is said to be in Quasi-Hermite form if the set of nonzero rows of $H$ is the Hermite basis of $\RS(H)$.}
\end{definition}
\begin{theorem}[Existence-Uniqueness] \label{existence-uniqueness of QHFs}
Let $A\in \RFM$. \emph{i)} There is a non-singular matrix $Q\in \RFM$ such that $Q\cdot A$ is in \emph{QHF}. \emph{ii)} $H_{1}, \ H_{2}$ are two \emph{QHFs} of $A$ if and only if there exists a permutation matrix $P\in \RFM$ such that $H_{1}=P\cdot H_{2}$.
\end{theorem}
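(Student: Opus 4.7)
The plan is to leverage the Hermite basis machinery already set up, then package it into matrix form; for the uniqueness, I will exploit the fact that a Hermite basis of a given subspace is unique.

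For part (i), first I would apply the axiom of countable choice to pick a representative $\mbox{\boldmath$\mathrm{A}$}_{\rho}$ from each length-equivalence class $\Class_\rho$ of $\RS(A)$, and then invoke the recurrence (\ref{rec-Hermite}) to produce the Hermite basis $\mbox{\boldmath$\mathrm{H}$}=(\mbox{\boldmath$\mathrm{H}$}_{\rho_j})_{j\in J}$ of $\RS(A)$, where $J$ is $\omega$ or a finite initial segment according to $\rank(A)$. The uniqueness claim embedded in Definition \ref{definition-Hermite basis} is itself straightforward: conditions (ii)--(iv) force $\mbox{\boldmath$\mathrm{H}$}_{\rho_j}$ to be the unique element of $\Class_{\rho_j}$ with right-leading coefficient $1$ that vanishes in columns $\rho_i$ for $i<j$.

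Next I would assemble the matrix $H$ by fixing an injection $\tau:J\hookrightarrow\omega$, placing $\mbox{\boldmath$\mathrm{H}$}_{\rho_j}$ as row $\tau(j)$ and zeros elsewhere; the cardinality of $\omega\setminus\tau(J)$ can be arranged to match $\nul(A)$. Since the nonzero rows form the Hermite basis of $\RS(H)=\RS(A)$, $H$ is in QHF. Producing some $Q\in\RFM$ with $Q\cdot A=H$ is then a matter of expressing each $\mbox{\boldmath$\mathrm{H}$}_{\rho_j}$ as a finite $\mathcal{F}$-combination of rows of $A$; but non-singularity requires more. For that, I would pick a complement $V$ of $\NS(A)$ in $\mathcal{F}^{(\omega)}$, take the unique $v_j\in V$ with $v_j\cdot A=\mbox{\boldmath$\mathrm{H}$}_{\rho_j}$, and pick a basis $\{n_k\}$ of $\NS(A)$. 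Placing the $v_j$'s at rows $\tau(j)$ and the $n_k$'s at the remaining rows of $Q$ yields a matrix whose rows form a Hamel basis of $\mathcal{F}^{(\omega)}$; the induced mapping of $Q$ is therefore an isomorphism on $\mathcal{F}^{(\omega)}$, so by Proposition \ref{proposition_1.0} $Q$ is non-singular, and $Q\cdot A=H$ by construction.

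For part (ii), the ``if'' direction is immediate: pre-multiplying by a permutation matrix only reshuffles rows, preserving both the row space and the set of nonzero rows, so $P\cdot H_2$ remains a QHF row-equivalent to $A$. Conversely, if $H_1,H_2$ are two QHFs of $A$, row-equivalence gives $\RS(H_1)=\RS(A)=\RS(H_2)$, and the nonzero rows of each form the Hermite basis of this common subspace. By the uniqueness noted above, these two Hermite bases coincide as sets. Matching equal nonzero rows against one another, together with any bijection between the zero-row positions, defines a bijection $\sigma$ on $\omega$ whose associated permutation matrix $P$ satisfies $H_1=P\cdot H_2$.

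The main obstacle I anticipate is the non-singularity of $Q$ in part (i). Rewriting each $\mbox{\boldmath$\mathrm{H}$}_{\rho_j}$ as a finite combination of rows of $A$ gives a row-finite $Q'$ solving $Q'\cdot A=H$ trivially, but such a $Q'$ need not be invertible in $\RFM$. Enforcing invertibility forces us to complement $\NS(A)$ and choose a basis for it, which is precisely the step where the axiom of countable choice is essential in Fulkerson's approach and cannot be circumvented without the constructive algorithm developed in the later sections of the paper.
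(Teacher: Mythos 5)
Your proof is correct and follows essentially the route the paper itself takes: the theorem is stated as Fulkerson's result, and the paragraphs immediately preceding it sketch exactly your construction — the axiom of countable choice selects representatives $\mbox{\boldmath $\mathrm{A}$}_\rho$ from the length-equivalence classes, the recurrence (\ref{rec-Hermite}) produces the (unique) Hermite basis of $\RS(A)$, and the non-singularity of $Q$ is obtained, as you do, by assembling a preimage basis of a complement of $\NS(A)$ together with a basis of $\NS(A)$ so that the rows of $Q$ form a Hamel basis of ${\mathcal{F}}^{(\omega)}$ (Propositions \ref{proposition_1.0} and \ref{proposition_1.1}). The only detail to add is that your injection $\tau$ must be chosen order-preserving, so that the nonzero rows of the assembled $H$ have strictly increasing row-length in accordance with Definition \ref{definition-Hermite basis}(ii) and (iv).
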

The matrix $H:=Q\cdot A$ resulting from Theorem \ref{existence-uniqueness of QHFs} is said to be QHF of $A$.
An equivalent statement to the definition of left association, shown in~\cite{Fu:Th}, is presented below. 
\begin{proposition} \label{proposition_3.1} Two row-finite matrices in $\RFM$ are left associates \emph{(}or row equivalent\emph{)} if and only if they have the same row space and nullity. 
\end{proposition}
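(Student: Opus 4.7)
The plan is to translate both directions into statements about the induced linear mappings $\alpha,\beta\in\Hom$ defined by $\alpha(\xi)=\xi\cdot A$ and $\beta(\xi)=\xi\cdot B$, recalling that $\RS(A)=\Img(\alpha)$, $\NS(A)=\Ker(\alpha)$, and analogously for $B$. For the forward direction, assume $A\sim B$ via a non-singular $Q\in\RFM$ with $Q\cdot A=B$. Since $Q$ is row-finite, each row of $Q\cdot A$ is a finite $\mathcal{F}$-linear combination of rows of $A$, giving $\RS(B)\subseteq\RS(A)$; the symmetric argument applied to $A=Q^{-1}\cdot B$ yields the reverse inclusion. By Proposition \ref{proposition_1.0}, the induced map $\psi:\xi\mapsto\xi\cdot Q$ is a linear isomorphism of ${\mathcal{F}}^{(\omega)}$. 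Using associativity, $\xi\in\NS(B)$ is equivalent to $(\xi\cdot Q)\cdot A=0$, i.e.\ to $\psi(\xi)\in\NS(A)$; hence $\psi$ restricts to a linear isomorphism $\NS(B)\to\NS(A)$, so $\nul(A)=\nul(B)$.

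For the converse, suppose $\RS(A)=\RS(B)=:\mathcal{Z}$ and $\nul(A)=\nul(B)$. I would build an isomorphism $\phi$ of ${\mathcal{F}}^{(\omega)}$ satisfying $\alpha\circ\phi=\beta$; then the matrix $Q:=[\phi]^{\mbox{\scriptsize \boldmath $\mathrm{E}$}}_{\mbox{\scriptsize \boldmath $\mathrm{E}$}}$ is automatically row-finite (since $\phi$ sends ${\mathcal{F}}^{(\omega)}$ into itself) and non-singular by Proposition \ref{proposition_1.0}, while the identity $\xi\cdot(Q\cdot A)=\alpha(\phi(\xi))=\beta(\xi)=\xi\cdot B$, valid for every $\xi$, forces $Q\cdot A=B$ and hence $A\sim B$. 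To construct $\phi$, pick complements ${\mathcal{F}}^{(\omega)}=\NS(A)\oplus V_\alpha=\NS(B)\oplus V_\beta$, so that the restrictions $\alpha|_{V_\alpha}:V_\alpha\to\mathcal{Z}$ and $\beta|_{V_\beta}:V_\beta\to\mathcal{Z}$ are linear isomorphisms onto the common row space. Fix any isomorphism $\phi_1:\NS(B)\to\NS(A)$, which exists because $\dim\NS(A)=\dim\NS(B)$, and set $\phi_2:=(\alpha|_{V_\alpha})^{-1}\circ\beta|_{V_\beta}$. Then $\phi:=\phi_1\oplus\phi_2$ does the job: on $\NS(B)$ both $\alpha\circ\phi$ and $\beta$ vanish, while on $V_\beta$ the definition of $\phi_2$ gives $\alpha\circ\phi=\beta$.

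The delicate point is the converse: it relies on splitting the short exact sequences $0\to\NS(A)\to{\mathcal{F}}^{(\omega)}\to\mathcal{Z}\to 0$ (and its analogue for $B$) and on exhibiting an isomorphism between $\NS(A)$ and $\NS(B)$ out of their equal Hamel dimensions, both of which invoke the axiom of choice. This is acceptable here, since the proposition asserts only the \emph{existence} of a row-equivalence witness $Q$; indeed, it is precisely the sort of choice-dependent existence result that the rest of the paper aims to supplant by the explicit infinite Gauss-Jordan algorithm.
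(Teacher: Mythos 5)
Your proof is correct. Note, however, that the paper itself offers no proof of this proposition: it is stated as a result ``shown in'' Fulkerson's thesis \cite{Fu:Th}, so there is no in-paper argument to compare against. Your two directions are the natural ones and both check out: the forward direction correctly uses row-finiteness of $Q$ and $Q^{-1}$ for the row-space equality and the induced isomorphism $\psi(\xi)=\xi\cdot Q$ (via Proposition \ref{proposition_1.0} and associativity in $\RFM$) to carry $\NS(B)$ onto $\NS(A)$; the converse correctly assembles $\phi=\phi_1\oplus\phi_2$ from a kernel isomorphism and the two splittings, verifies $\alpha\circ\phi=\beta$, and recovers $Q=[\phi]^{\mbox{\scriptsize \boldmath $\mathrm{E}$}}_{\mbox{\scriptsize \boldmath $\mathrm{E}$}}$, which is automatically row-finite and non-singular. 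Your closing remark is also apt and consistent with the paper's framing: the converse rests on choice-dependent splittings (compare Proposition \ref{proposition_1.1}, quoted from Bourbaki), which is acceptable for a pure existence statement and is precisely the kind of non-constructive step the paper's algorithm is designed to replace in the specific case of reduction to QHF/LRRF.
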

Using $\RS(A)$ in place of ${\mathcal{Z}}$, condition (\ref{cond-Hermite}) of recurrence (\ref{rec-Hermite}) takes the form:
\begin{equation} \label{cond-Hermite2}
\Right(\RS(A))=\Right(A).
\end{equation}
\section{Complete Bases of Right/Leftmost Index}
\label{sec:BasesOfSbspaces}
A commonly known Hamel basis of ${\mathcal{F}}^{(\omega)}$ is the set of rows of a lower triangular $\omega\times\omega$ matrix with nonzero elements 
in the diagonal, as indicated below.
\begin{proposition} \label{proposition_1.2} An indexed set $\mbox{\boldmath $\mathrm{M}$}=\{\mbox{\boldmath $\mathrm{m}$}_i\}_{i\in \omega}$ with $\mbox{\boldmath $\mathrm{m}$}_i\in {\mathcal{F}}^{(\omega)}$ defined by
\begin{equation}\label{eq-Bou1.1}	
\mbox{\boldmath $\mathrm{m}$}_i=\sum^{i}_{k=\zeta_i}  \lambda_{ik} \mbox{\boldmath $\mathrm{e}$}_{k}, 
\end{equation}
for $\lambda_{ii}\not=0$ and $0\le \zeta_i \le i$, is a Hamel basis of ${\mathcal{F}}^{(\omega)}$.
\end{proposition}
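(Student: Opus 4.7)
The plan is to verify directly the two defining properties of a Hamel basis: every finite subfamily of $\mbox{\boldmath $\mathrm{M}$}$ is linearly independent, and $\Span(\mbox{\boldmath $\mathrm{M}$})=\mathcal{F}^{(\omega)}$. The whole argument hinges on a single observation about (\ref{eq-Bou1.1}): since $\lambda_{ii}\neq 0$, we have $\maxs(\mbox{\boldmath $\mathrm{m}$}_{i})=i$, so the rightmost indices of the elements of $\mbox{\boldmath $\mathrm{M}$}$ are pairwise distinct (and in fact exhaust $\omega$).

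For linear independence, I would take a finite combination $\sum_{j=1}^{n}c_{j}\mbox{\boldmath $\mathrm{m}$}_{i_{j}}=\mathbf{0}$ with $i_{1}<\cdots<i_{n}$, and suppose for contradiction that some $c_{j}$ is nonzero. Letting $j_{0}$ be the largest such index, each $\mbox{\boldmath $\mathrm{m}$}_{i_{j}}$ with $j\le j_{0}$ has zero entries in every column past $i_{j}\le i_{j_{0}}$, so the coefficient of $\mbox{\boldmath $\mathrm{e}$}_{i_{j_{0}}}$ in the sum is precisely $c_{j_{0}}\lambda_{i_{j_{0}}\,i_{j_{0}}}\neq 0$, contradicting that the combination vanishes. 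Hence every finite subfamily of $\mbox{\boldmath $\mathrm{M}$}$ is independent.

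For the spanning property, since $\mbox{\boldmath $\mathrm{E}$}$ is a basis of $\mathcal{F}^{(\omega)}$ it suffices to show $\mbox{\boldmath $\mathrm{e}$}_{i}\in\Span(\mbox{\boldmath $\mathrm{M}$})$ for every $i\in\omega$, which I would prove by strong induction on $i$. The base case is forced: $0\le\zeta_{0}\le 0$ gives $\mbox{\boldmath $\mathrm{m}$}_{0}=\lambda_{00}\mbox{\boldmath $\mathrm{e}$}_{0}$, so $\mbox{\boldmath $\mathrm{e}$}_{0}=\lambda_{00}^{-1}\mbox{\boldmath $\mathrm{m}$}_{0}\in\Span(\mbox{\boldmath $\mathrm{M}$})$. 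For the inductive step, assuming $\mbox{\boldmath $\mathrm{e}$}_{0},\ldots,\mbox{\boldmath $\mathrm{e}$}_{i-1}\in\Span(\mbox{\boldmath $\mathrm{M}$})$, one rearranges (\ref{eq-Bou1.1}) as
\[
\mbox{\boldmath $\mathrm{e}$}_{i}=\lambda_{ii}^{-1}\Big(\mbox{\boldmath $\mathrm{m}$}_{i}-\sum_{k=\zeta_{i}}^{i-1}\lambda_{ik}\,\mbox{\boldmath $\mathrm{e}$}_{k}\Big),
\]
and the right-hand side lies in $\Span(\mbox{\boldmath $\mathrm{M}$})$ by the inductive hypothesis.

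I do not expect a genuine obstacle: the statement is essentially the invertibility of a lower-triangular matrix with nonzero diagonal, transferred to the finite-support vector space. The one subtlety worth flagging is that every linear combination produced must remain finite, which is automatic here since each $\mbox{\boldmath $\mathrm{m}$}_{i}$ already belongs to $\mathcal{F}^{(\omega)}$ and the induction writes each $\mbox{\boldmath $\mathrm{e}$}_{i}$ as an explicit combination of only $\mbox{\boldmath $\mathrm{m}$}_{0},\ldots,\mbox{\boldmath $\mathrm{m}$}_{i}$.
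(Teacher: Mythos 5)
Your proof is correct. The paper itself offers no proof of this Proposition --- it is presented as a ``commonly known'' fact, with the surrounding text merely observing that the coefficient matrix $(\lambda_{ik})$ is lower triangular with nonzero diagonal entries and hence non-singular --- and your direct verification (independence via the rightmost nonzero coordinate $\maxs(\mbox{\boldmath $\mathrm{m}$}_i)=i$, spanning via strong induction on $i$) is exactly the elementary argument that claim tacitly relies on, so it fills the omission rather than diverging from the paper's route.
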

By assuming that $\lambda_{ik}=0$ for all $k$ such that $0\le k < \zeta_i$, we can identify $\zeta_i$ in (\ref{eq-Bou1.1}) with zero and we shall adopt this convention throughout this paper. In this regard, the coefficient matrix $(\lambda_{ik})_{(k,i)\in \omega\times\omega}$ in (\ref{eq-Bou1.1}) is lower triangular with nonzero elements in the diagonal. Moreover $(\lambda_{ik})_{(i,k)\in \omega\times\omega}$ indicates the matrix of passage from the basis $\mbox{\boldmath $\mathrm{E}$}$ to the basis $\mbox{\boldmath $\mathrm{M}$}$, thus being non-singular. Besides, the next statement holds (see Bourbaki~\cite[Proposition 21 pp. 218]{Bou:Alg}).
\begin{proposition} \label{proposition_1.1} Let $A\in \RFM$ and $\{A_{j}\}_{j\in J}$ be a basis of $\RS(A)$. Let also $\alpha$ be the induced linear mapping by $A$. \emph{i)} If $(z_j)_{j\in J}$ is a sequence in ${\mathcal{F}}^{(\omega)}$ such that $\alpha(z_j)=L_j$ for all $j\in J$, then $(z_j)_{j\in J}$ is a basis of a complementary space of $\NS(A)$. \emph{ii)} If $W=\omega\setminus J$, then ${\rm card} (W)= \nul (A)$.
\end{proposition}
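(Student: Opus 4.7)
The plan is to handle the two parts in sequence, using the induced linear map $\alpha$ as the main shuttle between $\RS(A)$ and its preimages. For part (i), independence of $(z_j)_{j\in J}$ comes from applying $\alpha$ to any finite dependence $\sum_j c_j z_j=0$ to get $\sum_j c_j A_j=0$, and invoking that $\{A_j\}_{j\in J}$ is a basis of $\RS(A)$; the same trick shows that $V:=\Span\{z_j\}_{j\in J}$ meets $\NS(A)$ trivially. To see that $V+\NS(A)$ fills $\mathcal{F}^{(\omega)}$, I would take an arbitrary $x\in\mathcal{F}^{(\omega)}$, expand $\alpha(x)\in\RS(A)$ in the basis $\{A_j\}_{j\in J}$ with finite coefficients $c_j$, and exhibit the decomposition $x=\sum_j c_j z_j+(x-\sum_j c_j z_j)$, whose second summand lies in $\NS(A)$ by construction.

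For part (ii), the strategy is to build an explicit basis of $\NS(A)$ indexed by $W$, rather than trying to infer the cardinality from a dimension count. For each $k\in W$, the basis assumption gives a unique finite expansion $A_k=\sum_{j\in J}c_{kj}A_j$; since $\alpha(\mbox{\boldmath $\mathrm{e}$}_i)=A_i$ for every $i\in\omega$, the vectors
\[
n_k\;=\;\mbox{\boldmath $\mathrm{e}$}_k\;-\;\sum_{j\in J}c_{kj}\mbox{\boldmath $\mathrm{e}$}_j,\qquad k\in W,
\]
lie in $\NS(A)$. The decisive observation is that $n_k$ has $\mbox{\boldmath $\mathrm{e}$}_k$-coordinate equal to $1$, while every other $n_{k'}$ with $k'\in W\setminus\{k\}$ has zero $\mbox{\boldmath $\mathrm{e}$}_k$-coordinate, because the correction term $\sum_{j\in J}c_{k'j}\mbox{\boldmath $\mathrm{e}$}_j$ is supported on $J$, which is disjoint from $W$. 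This immediately yields linear independence of $\{n_k\}_{k\in W}$. For spanning, any $v=\sum_i v_i\mbox{\boldmath $\mathrm{e}$}_i\in\NS(A)$ forces, via $\alpha(v)=0$ and the independence of $\{A_j\}_{j\in J}$, the identities $v_j=-\sum_{k\in W}v_k c_{kj}$ for each $j\in J$; substituting into $\sum_{k\in W}v_k n_k$ recovers $v$ exactly. Therefore $\{n_k\}_{k\in W}$ is a basis of $\NS(A)$ and $\nul(A)=\card(W)$.

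The step I expect to be the main obstacle is precisely this cardinality equality: a direct rank-nullity accounting gives only the indeterminate cardinal identity $\card(\omega)=\card(J)+\nul(A)$, which cannot be inverted when both summands are infinite. The whole point is therefore to produce an indexed basis of $\NS(A)$ in bijection with $W$; once the family $\{n_k\}_{k\in W}$ is in hand, the verification is routine linear algebra in $\mathcal{F}^{(\omega)}$, constructive and free from the axiom of choice.
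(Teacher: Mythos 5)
Your proof is correct. Note that the paper gives no proof of this proposition at all: it is stated with a pointer to Bourbaki (Proposition 21, p.~218 of \emph{Algebra I}), so there is nothing internal to compare against, and your self-contained argument is a genuine addition. Two remarks. First, you silently (and correctly) read the statement's ``$\alpha(z_j)=L_j$'' as ``$\alpha(z_j)=A_j$''; the symbol $L_j$ is not defined at that point in the paper and is evidently a typo, and your reading is the only one that makes the proposition true and is the one the paper later uses (e.g.\ in the proof of Proposition \ref{prop6}(iii), where $f(\mbox{\boldmath $\mathrm{l}$}_j)=L_j$ plays the role of $\alpha(z_j)=A_j$). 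Second, your part (ii) is really part (i) in disguise: applying (i) with $z_j=\mbox{\boldmath $\mathrm{e}$}_j$ (legitimate since $\alpha(\mbox{\boldmath $\mathrm{e}$}_j)=A_j$) makes $V=\Span\{\mbox{\boldmath $\mathrm{e}$}_j\}_{j\in J}$ a complement of $\NS(A)$, so $\NS(A)\cong {\mathcal{F}}^{(\omega)}/V$, and $\{\mbox{\boldmath $\mathrm{e}$}_k+V\}_{k\in W}$ is visibly a basis of that quotient; your vectors $n_k$ are exactly the projections of $\mbox{\boldmath $\mathrm{e}$}_k$ onto $\NS(A)$ along $V$. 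Your explicit construction buys a concretely indexed basis of the null space (which is in the spirit of the paper's constructive program and is essentially what Proposition \ref{prop4}(v) later produces via the Gaussian elimination), at the cost of a few lines of verification; the quotient argument is shorter but less explicit. Your closing observation about why a naive cardinal count $\card(\omega)=\card(J)+\nul(A)$ cannot be inverted for infinite cardinals correctly identifies the point of the exercise.
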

Let $\alpha\in \Hom$ and $A=(A_{k})_{k\in \omega}=[\alpha]^{\mbox{\scriptsize \boldmath $\mathrm{E}$}}_{\mbox{\scriptsize \boldmath $\mathrm{E}$}}$. If $\{A_{k}\}_{k\in \omega}$ is a basis of ${\mathcal{F}}^{(\omega)}$, then $\alpha(\mbox{\boldmath $\mathrm{e}$}_{k})=A_k$ for all $k\in \omega$. Thus Proposition \ref{proposition_1.1} implies that $\Img(\alpha)={\mathcal{F}}^{(\omega)}$ and  $\NS(A)=\{0\}$, whence $\alpha$ is an isomorphism.
\begin{lemma} \label{lemma1}
Let $(\rho_i)_{i\in \omega}$ be a sequence in $\omega$ such that $i\le \rho_i$. Let also
\begin{equation}\label{eq-Bou1.2}	
\mbox{\boldmath $\mathrm{n}$}_i=\sum^{\rho_i}_{k=i} \lambda_{ik} \mbox{\boldmath $\mathrm{e}$}_k, 
\end{equation}
where $\lambda_{ii}\not=0$. The indexed set $\mbox{\boldmath $\mathrm{n}$}=\{\mbox{\boldmath $\mathrm{n}$}_i\}_{i\in\omega}$ with $\mbox{\boldmath $\mathrm{n}$}_i\in {\mathcal{F}}^{(\omega)}$ is linearly independent.
\end{lemma}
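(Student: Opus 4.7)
The plan is to exploit the fact that the leftmost index of $\mbox{\boldmath $\mathrm{n}$}_i$ is exactly $i$. Indeed, by formula (\ref{eq-Bou1.2}) the entries of $\mbox{\boldmath $\mathrm{n}$}_i$ at positions $0,1,\dots,i-1$ are zero (by the adopted convention on the lower summation index), while the entry at position $i$ equals $\lambda_{ii}\not=0$. Thus $\zeta(\mbox{\boldmath $\mathrm{n}$}_i)=i$ and the left leading coefficient of $\mbox{\boldmath $\mathrm{n}$}_i$ is $\lambda_{ii}$.

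To prove linear independence, I would take an arbitrary finite linear combination equal to zero, say $\sum_{i\in F} c_i \mbox{\boldmath $\mathrm{n}$}_i = \mbox{\boldmath $0$}$ with $F\subset\omega$ finite and nonempty. Set $i_0=\min F$. For every $i\in F$ with $i>i_0$, the $i_0$-th coordinate of $\mbox{\boldmath $\mathrm{n}$}_i$ vanishes because $i_0<i=\zeta(\mbox{\boldmath $\mathrm{n}$}_i)$. Reading the $i_0$-th coordinate of the combination therefore yields $c_{i_0}\lambda_{i_0 i_0}=0$, which forces $c_{i_0}=0$ since $\lambda_{i_0 i_0}\not=0$. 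Iterating this argument on $F\setminus\{i_0\}$ (or invoking induction on $\card(F)$) eliminates all the remaining coefficients.

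Equivalently, one can organize this observation matricially: the coefficient array $(\lambda_{ik})_{(i,k)\in\omega\times\omega}$ (with $\lambda_{ik}=0$ outside the range $i\le k\le\rho_i$) is upper triangular with a nowhere-vanishing diagonal, so pre-multiplication by any finitely supported row vector produces a row vector whose first nonzero component sits precisely at the smallest index in the support. Either formulation reaches the conclusion without circularity and without appealing to the axiom of choice.

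I do not anticipate any genuine obstacle here; the lemma is essentially the familiar fact that triangular systems with nonzero diagonal are nonsingular, transplanted to the row-finite $\omega\times\omega$ setting. The only point requiring a small amount of care is the bookkeeping of the support of each $\mbox{\boldmath $\mathrm{n}$}_i$: since $\rho_i$ is allowed to grow with $i$, one must use the \emph{leftmost} (not rightmost) index to order the pivots, which is exactly what the hypothesis $i\le\rho_i$ together with $\lambda_{ii}\not=0$ makes available.
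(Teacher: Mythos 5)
Your proof is correct. It differs in execution from the paper's: the paper fixes a finite subset $\mbox{\boldmath $\mathrm{F}$}\subset\{\mbox{\boldmath $\mathrm{n}$}_i\}_{0\le i\le m}$, augments it with the canonical vectors $\{\mbox{\boldmath $\mathrm{e}$}_i\}_{m+1\le i\le \mbox{\tiny $\mathrm{M}$}}$ where $\mbox{\footnotesize $\mathrm{M}$}=\max(\rho_i)_{0\le i\le m}$, and embeds everything into ${\mathcal{F}}^{\mbox{\tiny $\mathrm{M}$}+1}$, where the resulting square matrix is upper triangular with nonzero diagonal and hence a basis of ${\mathcal{F}}^{\mbox{\tiny $\mathrm{M}$}+1}$; linear independence of $\mbox{\boldmath $\mathrm{F}$}$ follows as a subset of a basis. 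You instead run the triangularity argument directly in ${\mathcal{F}}^{(\omega)}$: observing that $\zeta(\mbox{\boldmath $\mathrm{n}$}_i)=i$ with left leading coefficient $\lambda_{ii}\not=0$, you read off the coordinate at the minimal index of the support of a vanishing combination and induct. Both arguments are the same nonsingular-triangular-system idea, but yours is more economical --- it needs no completion to a square matrix, no appeal to the finite-dimensional basis criterion, and no embedding bookkeeping --- while the paper's version has the mild advantage of reusing, in mirror image, the setup already established for Proposition \ref{proposition_1.2}. One caution: your observation is essentially the injectivity-of-leftmost-index criterion of Proposition \ref{proposition_1.3}(ii), but that proposition is proved in the paper \emph{using} this lemma, so it is important (as you do) to argue directly rather than cite it.
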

\begin{proof}
If $\mbox{\boldmath $\mathrm{F}$}$ is a finite subset of $\mbox{\boldmath $\mathrm{n}$}$, there exists some large enough $m\in \omega$ such that $\mbox{\boldmath $\mathrm{F}$}\subset \{\mbox{\boldmath $\mathrm{n}$}_i\}_{0\le i \le m}$. Let $\mbox{\footnotesize $\mathrm{M}$}={\rm max}(\rho_i)_{0\le i \le m}$ and $\mbox{\boldmath $\mathrm{U}$}=\{\mbox{\boldmath $\mathrm{n}$}_i\}_{0\le i \le m}\cup \{\mbox{\boldmath $\mathrm{e}$}_i\}_{m+1\le i \le \mbox{\tiny $\mathrm{M}$}}$. Let also $e$ be the standard embedding of ${\mathcal{F}}^{\mbox{\tiny $\mathrm{M}$}+1}$ into ${\mathcal{F}}^{(\omega)}$ and $e^{-1}$ the isomorphism $e^{-1}: \Img(e)\mapsto {\mathcal{F}}^{\mbox{\tiny $\mathrm{M}$}+1}$. In view of (\ref{eq-Bou1.2}), the elements of $e^{-1}(\mbox{\boldmath $\mathrm{U}$})$ form an upper triangular finite $(\mbox{\footnotesize $\mathrm{M}$}+1)\times (\mbox{\footnotesize $\mathrm{M}$}+1)$ matrix with nonzero elements in the diagonal, and so the set $e^{-1}(\mbox{\boldmath $\mathrm{U}$})$ is a basis of ${\mathcal{F}}^{\mbox{\tiny $\mathrm{M}$}+1}$. Since $e^{-1}(\mbox{\boldmath $\mathrm{F}$})\subset e^{-1}(\mbox{\boldmath $\mathrm{U}$})$, $e^{-1}(\mbox{\boldmath $\mathrm{F}$})$ is linearly independent and so is $\mbox{\boldmath $\mathrm{F}$}$. As $\mbox{\boldmath $\mathrm{F}$}$ was arbitrary, $\mbox{\boldmath $\mathrm{n}$}$ is linearly independent.
\end{proof}
However, as shown in the next counterexample, $\mbox{\boldmath $\mathrm{n}$}$ is not in general a basis of ${\mathcal{F}}^{(\omega)}$, unlike the analogous result for finite matrices and that of Proposition \ref{proposition_1.2}. Let $\mbox{\boldmath $\mathrm{a}$}=(\mbox{\boldmath $\mathrm{a}$}_i)_{i\in\omega}$ with $\mbox{\boldmath $\mathrm{a}$}_i=\mbox{\boldmath $\mathrm{e}$}_i+\mbox{\boldmath $\mathrm{e}$}_{i+1}$ for $i\in \omega$. Proposition \ref{proposition_1.2} implies that $\mbox{\boldmath $\mathrm{a}$}\cup \{\mbox{\boldmath $\mathrm{e}$}_{0}\}=\{\mbox{\boldmath $\mathrm{e}$}_{0}, \mbox{\boldmath $\mathrm{e}$}_{0}+\mbox{\boldmath $\mathrm{e}$}_{1}, \mbox{\boldmath $\mathrm{e}$}_{1}+\mbox{\boldmath $\mathrm{e}$}_{2},...\}$ is a basis of ${\mathcal{F}}^{(\omega)}$. Accordingly $\mbox{\boldmath $\mathrm{e}$}_0\not\in \Span(\mbox{\boldmath $\mathrm{a}$})$, namely $\displaystyle \Span(\mbox{\boldmath $\mathrm{a}$})\varsubsetneqq {\mathcal{F}}^{(\omega)}$. Thus $\mbox{\boldmath $\mathrm{a}$}$ is not a generating system of ${\mathcal{F}}^{(\omega)}$.
\begin{definition} \label{def-complete basis}
\rm {A Hamel basis $\mbox{\boldmath $\mathrm{F}$}$ of ${\mathcal{Z}}$, which satisfies $\Right(\mbox{\boldmath $\mathrm{F}$})=\Right({\mathcal{Z}})$ (resp. $\Left(\mbox{\boldmath $\mathrm{F}$})=\Left({\mathcal{Z}})$), will be referred to as a \emph{complete basis of rightmost index} (resp. \emph{leftmost index}) of ${\mathcal{Z}}$. If ${\mathcal{Z}}$ is the row space of a matrix, then we shall equivalently use the terms ``\emph{row-length}" and ``rightmost index}".
\end{definition}
In view of (\ref{cond-Hermite}), the bases $\mbox{\boldmath $\mathrm{H}$}$ and $\mbox{\boldmath $\mathrm{A}$}$ whose terms are connected by (\ref{rec-Hermite}) are complete bases of rightmost index. Two useful complete bases of rightmost index (resp. leftmost index) of subspaces of ${\mathcal{F}}^{(\omega)}$ are given in what follows.

\begin{proposition} \label{proposition_1.3} Let $\mbox{\boldmath $\mathrm{F}$}=(\mbox{\boldmath $\mathrm{F}$}_{j})_{j\in J}$ be a family of nonzero vectors in ${\mathcal{F}}^{(\omega)}$. Let also ${\mathcal{Z}}=\Span(\mbox{\boldmath $\mathrm{F}$}_j)_{j\in J}$.
\emph{i)} If the sequence $(\rho_{j})_{j\in J}$ of rightmost index of $\mbox{\boldmath $\mathrm{F}$}$ is injective, then 
$\mbox{\boldmath $\mathrm{F}$}$ is a complete basis of rightmost index of ${\mathcal{Z}}$.
\emph{ii)} If the sequence $(\zeta_{j})_{j\in J}$ of leftmost index of $\mbox{\boldmath $\mathrm{F}$}$ is injective, then 
$\mbox{\boldmath $\mathrm{F}$}$ is a complete basis of leftmost index of ${\mathcal{Z}}$.
\end{proposition}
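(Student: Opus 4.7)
The plan is to verify, under the hypothesis of part (i), the two conditions a complete basis of rightmost index must satisfy: that $\mbox{\boldmath $\mathrm{F}$}$ is linearly independent (which, combined with the spanning hypothesis, makes it a Hamel basis of $\mathcal{Z}$) and that $\Right(\mbox{\boldmath $\mathrm{F}$})=\Right(\mathcal{Z})$. Part (ii) will then follow by a symmetric argument with leftmost in place of rightmost indices.

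First I would prove linear independence. Take any finite subfamily of $\mbox{\boldmath $\mathrm{F}$}$ indexed by distinct $j_1,\dots,j_n\in J$ and a vanishing linear combination $\sum_{i=1}^n c_i \mbox{\boldmath $\mathrm{F}$}_{j_i}=\textbf{0}$. Injectivity of $(\rho_j)_{j\in J}$ permits a finite reordering so that $\rho_{j_1}<\rho_{j_2}<\cdots<\rho_{j_n}$. Reading off column $\rho_{j_n}$, only the last term contributes, and its entry there is $c_n$ times the (nonzero) right leading coefficient of $\mbox{\boldmath $\mathrm{F}$}_{j_n}$; hence $c_n=0$. A downward induction on the rightmost index then annihilates the remaining coefficients.

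Next I would establish completeness. The inclusion $\Right(\mbox{\boldmath $\mathrm{F}$})\subseteq \Right(\mathcal{Z})$ is immediate since every $\mbox{\boldmath $\mathrm{F}$}_j$ is a nonzero element of $\mathcal{Z}$. For the reverse inclusion, take a nonzero $z\in \mathcal{Z}$ and write $z=\sum_{i=1}^n c_i \mbox{\boldmath $\mathrm{F}$}_{j_i}$ as a finite combination with $c_i\neq 0$ and distinct indices; reorder so that $\rho_{j_1}<\cdots<\rho_{j_n}$. All positions beyond $\rho_{j_n}$ vanish in each summand, while position $\rho_{j_n}$ receives only the contribution of the last term, which is nonzero. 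Therefore $\maxs(z)=\rho_{j_n}\in \Right(\mbox{\boldmath $\mathrm{F}$})$, and $\Right(\mathcal{Z})\subseteq \Right(\mbox{\boldmath $\mathrm{F}$})$ follows.

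The whole argument is a triangular-cancellation routine applied finite-subfamily by finite-subfamily; the only point worth stressing is that injectivity of $\rho$ is exactly what permits each finite reordering by strictly increasing rightmost index, so no appeal to countable choice is made (the global relabeling of Theorem \ref{Relabeling} is not even needed here). Part (ii) is obtained verbatim by interchanging the roles of rightmost and leftmost: replace $\rho$ by $\zeta$, the right leading coefficient by the left leading coefficient, and extract the column of smallest rather than largest index in the cancellation step.
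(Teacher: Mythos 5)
Your proof is correct, and its completeness half coincides with the paper's: both extract $\Right({\mathcal{Z}})\subseteq\Right(\mbox{\boldmath $\mathrm{F}$})$ by ordering a finite representing combination so that the summand of largest rightmost index alone survives in that column. Where you genuinely diverge is in the linear-independence half. The paper does not cancel coefficients directly; it relabels $\mbox{\boldmath $\mathrm{F}$}$ by the values of $\rho$, adjoins the canonical vectors $\{\mbox{\boldmath $\mathrm{e}$}_i\}_{i\in\omega\setminus\Right({\mathcal{Z}})}$, and observes that the augmented family satisfies the lower-triangular pattern of Proposition \ref{proposition_1.2} (so it is a Hamel basis of ${\mathcal{F}}^{(\omega)}$ and $\mbox{\boldmath $\mathrm{F}$}$, as a subset, is independent); for part (ii) it instead invokes Lemma \ref{lemma1}, since the upper-triangular pattern only yields independence, not a basis. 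Your direct finite triangular cancellation is more elementary and self-contained, avoids the augmentation and the appeal to those two auxiliary results, and has the small advantage of being manifestly symmetric between the rightmost and leftmost cases, whereas the paper must switch lemmas between (i) and (ii). What the paper's route buys in exchange is the stronger byproduct that $\mbox{\boldmath $\mathrm{F}$}$ extends to a complete basis of ${\mathcal{F}}^{(\omega)}$ by adjoining canonical vectors, which is reused immediately in Corollary \ref{Corollary_1.5}. Your closing remark that no choice principle and no global relabeling (Theorem \ref{Relabeling}) is needed is accurate but moot: the paper's relabeling here is mere reindexing by the image of the injective map $\rho$, not a reordering requiring that theorem.
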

\begin{proof} i) Since $\Right(\mbox{\boldmath $\mathrm{F}$})\subset \Right({\mathcal{Z}})$, we are to show that $\Right({\mathcal{Z}})\subset \Right(\mbox{\boldmath $\mathrm{F}$})$. Let $i\in \Right({\mathcal{Z}})$. By definition there exists some $Z\in {\mathcal{Z}}$ such that $\maxs(Z)=i$. As $\mbox{\boldmath $\mathrm{F}$}$ spans ${\mathcal{Z}}$, we conclude that $Z=\lambda_{j_{1}}\mbox{\boldmath $\mathrm{F}$}_{j_{1}}+\lambda_{j_{2}}\mbox{\boldmath $\mathrm{F}$}_{j_{2}}+...+\lambda_{j_{n}}\mbox{\boldmath $\mathrm{F}$}_{j_{n}}$. Since $\rho$ is injective, we may assume that $\rho_{j_{1}}<\rho_{j_{2}}<...<\rho_{j_{n}}$. Thus $i=\rho_{j_{n}}\in \Right(\mbox{\boldmath $\mathrm{F}$})$, as required. In showing that $\mbox{\boldmath $\mathrm{F}$}$ is basis of ${\mathcal{Z}}$, it remains to show that $\mbox{\boldmath $\mathrm{F}$}$ is linearly independent. Whereas $\rho:J\mapsto \omega$ is injective and $\Right({\mathcal{Z}})=\Right(\mbox{\boldmath $\mathrm{F}$})$, it follows that $\rho:J\mapsto \Right({\mathcal{Z}})$ is bijective. Accordingly, the terms of $\mbox{\boldmath $\mathrm{F}$}$ can be relabeled by setting $\mbox{\boldmath $\mathrm{F}$}'_{\rho_{j}}=\mbox{\boldmath $\mathrm{F}$}_{j}$ for all $j\in J$. Since $\{\mbox{\boldmath $\mathrm{F}$}'_{\rho_j}\}_{j\in J}$ and $\{\mbox{\boldmath $\mathrm{F}$}'_i\}_{i\in \Right({\mathcal{Z}})}$ coincide as sets, the terms of $\displaystyle {\mathcal{X}}=\{\mbox{\boldmath $\mathrm{F}$}'_i\}_{i\in \Right({\mathcal{Z}})}\cup \{\mbox{\boldmath $\mathrm{e}$}_i\}_{i\in\omega\setminus \Right({\mathcal{Z}})}$ satisfy (\ref{eq-Bou1.1}). Proposition \ref{proposition_1.2} implies that ${\mathcal{X}}$ is a basis of ${\mathcal{F}}^{(\omega)}$. Since  $\mbox{\boldmath $\mathrm{F}$}=\{\mbox{\boldmath $\mathrm{F}$}'_i\}_{i\in \Right({\mathcal{Z}})}\subset {\mathcal{X}}$ the assertion follows. ii) By substituting $\zeta$ for $\rho$ and using the above arguments, we conclude that $\Left ({\mathcal{Z}})=\Left(\mbox{\boldmath $\mathrm{F}$})$. Whereas $\zeta: J\mapsto \Left(\mbox{\boldmath $\mathrm{F}$})$ is bijective, we define $\mbox{\boldmath $\mathrm{F}$}'_{\zeta_j}=\mbox{\boldmath $\mathrm{F}$}_j$ for $j\in J$ and $\mbox{\boldmath $\mathrm{F}$}=\{\mbox{\boldmath $\mathrm{F}$}'_{\zeta_j}\}_{j\in J}=\{\mbox{\boldmath $\mathrm{F}$}'_m\}_{m\in \Left(\mbox{\footnotesize\boldmath $\mathrm{F}$})}$. The set $\{\mbox{\boldmath $\mathrm{F}$}'_m\}_{m\in \Left(\mbox{\footnotesize\boldmath $\mathrm{F}$})}\cup\{\mbox{\boldmath $\mathrm{e}$}_m\}_{m\in \omega\setminus \Left(\mbox{\footnotesize\boldmath $\mathrm{F}$})}$ is linearly independent, as satisfying (\ref{eq-Bou1.2}), and so is its subset $\mbox{\boldmath $\mathrm{F}$}$, as required. 
\end{proof}
\begin{corollary} \label{corollary_1.4} Let $A\in \RFM$. Let also the sequence of rightmost index \emph{(}resp. leftmost index\emph{)} of the nonzero rows, say $(A_{j})_{j\in J}$, of $A$ be injective. Then:
\emph{i)} The indexed set $(A_{j})_{j\in J}$ is a complete basis of rightmost (resp. leftmost index) of $\RS(A)$. 
\emph{ii)} ${\rm card} (W)= \nul (A)$ \emph{(}$W=\omega\setminus J$\emph{)}.
\end{corollary}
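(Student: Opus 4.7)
The plan is to derive both parts as essentially immediate consequences of Propositions \ref{proposition_1.3} and \ref{proposition_1.1}. The first step is to note that, since zero rows contribute nothing to the span, one has $\RS(A)=\Span(A_{j})_{j\in J}$, where $(A_{j})_{j\in J}$ enumerates the nonzero rows of $A$. Thus $(A_{j})_{j\in J}$ is a family of \emph{nonzero} vectors in ${\mathcal{F}}^{(\omega)}$ spanning $\RS(A)$ whose sequence of rightmost (resp. leftmost) index is, by hypothesis, injective.

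For part (i), I would apply Proposition \ref{proposition_1.3}(i) in the rightmost case and Proposition \ref{proposition_1.3}(ii) in the leftmost case to the family $(A_{j})_{j\in J}$; the corresponding conclusion is exactly that $(A_{j})_{j\in J}$ is a complete basis of rightmost (resp. leftmost) index of $\RS(A)$.

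For part (ii), I would feed the output of (i) into Proposition \ref{proposition_1.1}: having established that $\{A_{j}\}_{j\in J}$ is a Hamel basis of $\RS(A)$, the hypothesis of that proposition is satisfied, and its conclusion (ii) gives $\card(W)=\nul(A)$ with $W=\omega\setminus J$, which in the paper's conventions is precisely the index set of the zero rows of $A$.

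I expect no substantial obstacle; the corollary is a packaging of the two earlier propositions together with the elementary observation that, in the notation $A=(A_{k})_{k\in\omega}$, the complement $W=\omega\setminus J$ is exactly the indexing set of the zero rows. The only point requiring mild care is ensuring that the $J$ appearing in the hypothesis of Proposition \ref{proposition_1.3} (an arbitrary index set of a family of nonzero vectors) is consistently identified with the $J$ appearing in Proposition \ref{proposition_1.1} (the index set of nonzero rows of $A$), which is automatic here by construction.
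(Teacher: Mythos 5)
Your proposal is correct and follows essentially the same route as the paper: part (i) is Proposition \ref{proposition_1.3} applied to the family of nonzero rows with ${\mathcal{Z}}=\RS(A)$, and part (ii) then follows from Proposition \ref{proposition_1.1}(ii) once the nonzero rows are known to form a basis of $\RS(A)$. The paper's proof is just a terser statement of the same two applications.
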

\begin{proof} Proposition \ref{proposition_1.3} applied with $\mbox{\boldmath $\mathrm{F}$}=(A_{j})_{j\in J}$ and ${\mathcal{Z}}=\RS(A)$ imply directly statements (i).  Statement (ii) follows from Proposition \ref{proposition_1.1} (ii). 
\end{proof}
An equivalent statement to Definition \ref{def-complete basis}, concerning complete bases of rightmost index exclusively, is given below.
\begin{theorem} \label{theorem complete-basis} Let ${\mathcal{Z}}$ be a subspace of ${\mathcal{F}}^{(\omega)}$. Then $\mbox{\boldmath $\mathrm{F}$}=(\mbox{\boldmath $\mathrm{F}$}_j)_{j\in J}$ is a complete basis of rightmost index of ${\mathcal{Z}}$ if and only if ${\mathcal{Z}}=\Span(\mbox{\boldmath $\mathrm{F}$}_j)_{j\in J}$ and the sequence $(\rho_{j})_{j\in J}$ of rightmost index of $\mbox{\boldmath $\mathrm{F}$}$ is injective.
\begin{proof} Let $\mbox{\boldmath $\mathrm{F}$}_j=(f_{jk})_{k\in \omega}$. On the contrary, let there are $j_0, j_1$ in $J$ such that $j_0\not= j_1$ and $\rho_{j_0}=\rho_{j_1}$. Let also $m_1:=\rho_{j_1}$ and $\lambda_0:=f_{j_1 m_1}, \lambda_1:=f_{j_0 m_1}$ (in that order). Certainly $\lambda_0\not= 0, \ \lambda_1\not= 0$ and $\mbox{\boldmath $\mathrm{F}$}_{j_0}=(f_{j_0 0},f_{j_0 1},..., \lambda_1,0,...)$, $\mbox{\boldmath $\mathrm{F}$}_{j_1}=(f_{j_1 0},f_{j_1 1},..., \lambda_0,0,...)$, where $\lambda_0, \lambda_1$ have as column position $m_1$. Thus $\maxs (\lambda_0\mbox{\boldmath $\mathrm{F}$}_{j_0}-\lambda_1 \mbox{\boldmath $\mathrm{F}$}_{j_1})<m_1$ and $\lambda_0\mbox{\boldmath $\mathrm{F}$}_{j_0}-\lambda_1 \mbox{\boldmath $\mathrm{F}$}_{j_1}\not=\textbf{0}$, since $\mbox{\boldmath $\mathrm{F}$}_{j_0}, \mbox{\boldmath $\mathrm{F}$}_{j_1}$ are linearly independent. Let us call $m_2=\maxs (\lambda_0\mbox{\boldmath $\mathrm{F}$}_{j_0}-\lambda_1 \mbox{\boldmath $\mathrm{F}$}_{j_1})$. As $\lambda_0\mbox{\boldmath $\mathrm{F}$}_{j_0}-\lambda_1 \mbox{\boldmath $\mathrm{F}$}_{j_1} \in {\mathcal{Z}}$ and $\Right({\mathcal{Z}})=\Right(\mbox{\boldmath $\mathrm{F}$})$, we infer $m_2\in \Right(\mbox{\boldmath $\mathrm{F}$})$. Thus, there is some $\mbox{\boldmath $\mathrm{F}$}_{j_2}$ in $\mbox{\boldmath $\mathrm{F}$}$ such that $\maxs (\mbox{\boldmath $\mathrm{F}$}_{j_2})=m_2$ and $m_2<m_1$. 
As $m_2=\maxs (\lambda_0\mbox{\boldmath $\mathrm{F}$}_{j_0}-\lambda_1 \mbox{\boldmath $\mathrm{F}$}_{j_1})=\maxs (\mbox{\boldmath $\mathrm{F}$}_{j_2})$, there is $\lambda_2\not=0$ such that $m_3=\maxs(\lambda_0\mbox{\boldmath $\mathrm{F}$}_{j_0}-\lambda_1 \mbox{\boldmath $\mathrm{F}$}_{j_1}-\lambda_2 \mbox{\boldmath $\mathrm{F}$}_{j_2})$ and $m_3<m_2$. Now $\lambda_0\mbox{\boldmath $\mathrm{F}$}_{j_0}-\lambda_1 \mbox{\boldmath $\mathrm{F}$}_{j_1}-\lambda_2 \mbox{\boldmath $\mathrm{F}$}_{j_2}\not=\textbf{0}$, since $\mbox{\boldmath $\mathrm{F}$}_{j_0},\mbox{\boldmath $\mathrm{F}$}_{j_1},\mbox{\boldmath $\mathrm{F}$}_{j_2}$ are linearly independent, thus $m_3\in \Right(\mbox{\boldmath $\mathrm{F}$})$. This process defines a strictly decreasing infinite sequence $m=(m_i)$ in $\naturals$, which is impossible. The converse statement is equivalent to Proposition \ref{proposition_1.3}.
\end{proof}
\end{theorem}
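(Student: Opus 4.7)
The "$\Leftarrow$" direction of this biconditional is already available to me: it is precisely Proposition \ref{proposition_1.3}(i), which says that injectivity of $\rho$ together with $\mathcal{Z}=\Span(\mbox{\boldmath $\mathrm{F}$}_j)_{j\in J}$ forces $\mbox{\boldmath $\mathrm{F}$}$ to be a complete basis of rightmost index. So the work is concentrated in the "$\Rightarrow$" direction: assuming $\mbox{\boldmath $\mathrm{F}$}$ is a complete basis of rightmost index of $\mathcal{Z}$, I must show that the map $\rho:J\to\omega$ defined by $j\mapsto\maxs(\mbox{\boldmath $\mathrm{F}$}_j)$ is injective.

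My plan is a proof by contradiction exploiting well-foundedness of $\naturals$. Suppose that $\rho$ is not injective, so pick distinct $j_0,j_1\in J$ with $\rho_{j_0}=\rho_{j_1}=:m_1$. The right leading coefficients of $\mbox{\boldmath $\mathrm{F}$}_{j_0}$ and $\mbox{\boldmath $\mathrm{F}$}_{j_1}$ are both nonzero at column $m_1$, so I can choose nonzero scalars $\lambda_0,\lambda_1$ making the combination $v_1:=\lambda_0\mbox{\boldmath $\mathrm{F}$}_{j_0}-\lambda_1\mbox{\boldmath $\mathrm{F}$}_{j_1}$ vanish at column $m_1$. Linear independence of $\mbox{\boldmath $\mathrm{F}$}$ (as part of a basis) forces $v_1\neq\mathbf{0}$, so $m_2:=\maxs(v_1)$ is well-defined and satisfies $m_2<m_1$. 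Because $v_1\in\mathcal{Z}$ and $\Right(\mathcal{Z})=\Right(\mbox{\boldmath $\mathrm{F}$})$, the integer $m_2$ lies in $\Right(\mbox{\boldmath $\mathrm{F}$})$: there is some $j_2\in J$ with $\rho_{j_2}=m_2$.

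Iterating, I peel off one term at a time: given $v_n\neq\mathbf{0}$ with $\maxs(v_n)=m_{n+1}$, pick $j_{n+1}\in J$ with $\rho_{j_{n+1}}=m_{n+1}$, choose the scalar $\lambda_{n+1}$ that cancels column $m_{n+1}$, and set $v_{n+1}:=v_n-\lambda_{n+1}\mbox{\boldmath $\mathrm{F}$}_{j_{n+1}}$. The step produces $\maxs(v_{n+1})<m_{n+1}$, and again $v_{n+1}\in\mathcal{Z}$, so its rightmost index belongs to $\Right(\mbox{\boldmath $\mathrm{F}$})$. Iterating yields a strictly decreasing infinite sequence $m_1>m_2>m_3>\cdots$ in $\naturals$, which is the desired contradiction.

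The delicate point — and really the only obstacle in this plan — is verifying that every $v_n$ is genuinely nonzero so that the next rightmost index is defined. This reduces to showing that the basis vectors involved, namely $\mbox{\boldmath $\mathrm{F}$}_{j_0},\mbox{\boldmath $\mathrm{F}$}_{j_1},\mbox{\boldmath $\mathrm{F}$}_{j_2},\ldots$, are pairwise distinct so that linear independence of $\mbox{\boldmath $\mathrm{F}$}$ forbids any nontrivial linear combination from vanishing. Distinctness follows from the construction: the rightmost indices $m_2>m_3>\cdots$ are strictly decreasing and all strictly less than $m_1$, so the $\mbox{\boldmath $\mathrm{F}$}_{j_n}$ for $n\geq 2$ have mutually distinct rightmost indices, each different from $m_1$, and hence differ from each other and from $\mbox{\boldmath $\mathrm{F}$}_{j_0},\mbox{\boldmath $\mathrm{F}$}_{j_1}$. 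Once this bookkeeping is settled, the descent argument closes the theorem.
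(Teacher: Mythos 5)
Your proposal is correct and follows essentially the same route as the paper: the converse is delegated to Proposition \ref{proposition_1.3}, and the forward direction is the same infinite-descent argument, cancelling the shared rightmost index of $\mbox{\boldmath $\mathrm{F}$}_{j_0}$ and $\mbox{\boldmath $\mathrm{F}$}_{j_1}$, using $\Right({\mathcal{Z}})=\Right(\mbox{\boldmath $\mathrm{F}$})$ to find the next basis vector to subtract, and deriving a strictly decreasing infinite sequence in $\naturals$. Your explicit check that the vectors $\mbox{\boldmath $\mathrm{F}$}_{j_0},\mbox{\boldmath $\mathrm{F}$}_{j_1},\mbox{\boldmath $\mathrm{F}$}_{j_2},\ldots$ are pairwise distinct (so that linear independence of the family applies) is a welcome bit of bookkeeping that the paper leaves implicit.
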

Unlike Theorem \ref{theorem complete-basis}, the sequence $(\zeta_{j})_{j\in J}$ of a complete basis of leftmost index is not necessarily injective. To see this, let us recall the basis $\mbox{\boldmath $\mathrm{a}$}\cup \{\mbox{\boldmath $\mathrm{e}$}_{0}\}$ of ${\mathcal{F}}^{(\omega)}$, as defined in the previous counterexample. Call this basis $\mbox{\boldmath $\mathrm{F}$}$. As $\Left(\mbox{\boldmath $\mathrm{F}$})=\omega$, it follows that $\Left({\mathcal{F}}^{(\omega)})=\Left(\mbox{\boldmath $\mathrm{F}$})$. Thus $\mbox{\boldmath $\mathrm{F}$}$ is a complete basis of leftmost index of ${\mathcal{F}}^{(\omega)}$ but the sequence $(\zeta_{j})_{j\in J}$ is not injective, since $\zeta_0=\zeta_1=0$.
\begin{corollary} \label{Corollary_1.5} Let $\mbox{\boldmath $\mathrm{F}$}=(\mbox{\boldmath $\mathrm{F}$}_j)_{j\in J}$ be a complete basis of rightmost index of ${\mathcal{Z}}$. \emph{(i)} The set $\mbox{\boldmath $\mathrm{F}$}\cup\{\mbox{\boldmath $\mathrm{e}$}_i\}_{i\in \omega\setminus\Right(\mbox{\footnotesize\boldmath $ \mathrm{F}$})}$ is a complete basis of rightmost index of ${\mathcal{F}}^{(\omega)}$.\\
\emph{(ii)} The set $\{\mbox{\boldmath $\mathrm{e}$}_i\}_{i\in \omega\setminus\Right(\mbox{\footnotesize\boldmath $ \mathrm{F}$})}$ is a basis of a complementary space of ${\mathcal{Z}}$.
\end{corollary}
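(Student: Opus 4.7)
The plan is to leverage Theorem \ref{theorem complete-basis} as the characterization of complete bases of rightmost index, reducing (i) to checking two things: that the combined family spans ${\mathcal{F}}^{(\omega)}$, and that its sequence of rightmost indices is injective. For (ii) the point is that once (i) is in hand, a standard direct-sum argument produces the required complementary subspace.

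For the injectivity check in (i), note that the sequence of rightmost indices of $\mbox{\boldmath $\mathrm{F}$}$ is injective because $\mbox{\boldmath $\mathrm{F}$}$ itself is a complete basis of rightmost index (one direction of Theorem \ref{theorem complete-basis}), and its image is exactly $\Right(\mbox{\boldmath $\mathrm{F}$})$. The added vectors $\mbox{\boldmath $\mathrm{e}$}_i$ for $i\in\omega\setminus\Right(\mbox{\boldmath $\mathrm{F}$})$ have $\maxs(\mbox{\boldmath $\mathrm{e}$}_i)=i$, so they contribute exactly the complementary indices $\omega\setminus\Right(\mbox{\boldmath $\mathrm{F}$})$, injectively and disjointly from the first block. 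Hence the global sequence of rightmost indices is injective with image $\Right(\mbox{\boldmath $\mathrm{F}$})\cup(\omega\setminus\Right(\mbox{\boldmath $\mathrm{F}$}))=\omega=\Right({\mathcal{F}}^{(\omega)})$. For the spanning, I would simply reuse the relabeling trick already appearing in the proof of Proposition \ref{proposition_1.3}(i): set $\mbox{\boldmath $\mathrm{F}$}'_{\rho_j}=\mbox{\boldmath $\mathrm{F}$}_j$ for $j\in J$, so that the combined family, indexed by $\omega$ via $\Right(\mbox{\boldmath $\mathrm{F}$})\sqcup(\omega\setminus\Right(\mbox{\boldmath $\mathrm{F}$}))$, has each vector of the form $\sum_{k=\zeta_i}^i \lambda_{ik}\mbox{\boldmath $\mathrm{e}$}_k$ with nonzero diagonal coefficient. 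Proposition \ref{proposition_1.2} then gives it is a Hamel basis of ${\mathcal{F}}^{(\omega)}$, and combined with the injectivity above, Theorem \ref{theorem complete-basis} yields (i).

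For (ii), write $\mathcal{Y}=\Span\{\mbox{\boldmath $\mathrm{e}$}_i\}_{i\in\omega\setminus\Right(\mbox{\boldmath $\mathrm{F}$})}$. The family $\{\mbox{\boldmath $\mathrm{e}$}_i\}_{i\in\omega\setminus\Right(\mbox{\boldmath $\mathrm{F}$})}$ is linearly independent as a subset of the canonical basis $\mbox{\boldmath $\mathrm{E}$}$, so it is a basis of $\mathcal{Y}$. By (i), the union $\mbox{\boldmath $\mathrm{F}$}\cup\{\mbox{\boldmath $\mathrm{e}$}_i\}_{i\in\omega\setminus\Right(\mbox{\boldmath $\mathrm{F}$})}$ is a basis of ${\mathcal{F}}^{(\omega)}$, so the sum ${\mathcal{Z}}+\mathcal{Y}$ is direct and equals ${\mathcal{F}}^{(\omega)}$. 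Therefore $\mathcal{Y}$ is a complementary subspace of ${\mathcal{Z}}$ with basis $\{\mbox{\boldmath $\mathrm{e}$}_i\}_{i\in\omega\setminus\Right(\mbox{\boldmath $\mathrm{F}$})}$.

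I do not expect a genuine obstacle here; the corollary is essentially a bookkeeping consequence of Proposition \ref{proposition_1.2}, Proposition \ref{proposition_1.3}, and Theorem \ref{theorem complete-basis}. The only mildly subtle point is recognizing that the spanning step in the earlier proof of Proposition \ref{proposition_1.3}(i) was already producing this exact basis of ${\mathcal{F}}^{(\omega)}$, so no new computation is required beyond the disjointness of the two index blocks.
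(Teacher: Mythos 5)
Your proposal is correct and follows essentially the same route as the paper: relabel $\mbox{\boldmath $\mathrm{F}$}$ by rightmost index, observe that the union with $\{\mbox{\boldmath $\mathrm{e}$}_i\}_{i\in \omega\setminus\Right(\mbox{\footnotesize\boldmath $\mathrm{F}$})}$ satisfies the hypotheses of Proposition \ref{proposition_1.2} (exactly as in the proof of Proposition \ref{proposition_1.3}(i)), and note that the combined rightmost-index sequence is injective with image $\omega$. Part (ii) as a direct-sum consequence of (i) is also how the paper disposes of it, so no discrepancy to report.
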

\begin{proof} (i) Theorem \ref{theorem complete-basis} entails that $\rho: J\mapsto \Right(\mbox{\boldmath $\mathrm{F}$})$ is bijective. Thus $\mbox{\boldmath $\mathrm{F}$}$ can be relabeled by setting $\mbox{\boldmath $\mathrm{F}$}'_{\rho_{j}}=\mbox{\boldmath $\mathrm{F}$}_{j}$ for all $j\in J$ and $\mbox{\boldmath $\mathrm{F}$}'=\{\mbox{\boldmath $\mathrm{F}$}'_i\}_{i\in \Right(\mbox{\footnotesize\boldmath $ \mathrm{F}$})}$. Let ${\mathcal{X}}=\mbox{\boldmath $\mathrm{F}$}'\cup \{\mbox{\boldmath $\mathrm{e}$}_i\}_{i\in \omega\setminus\Right(\mbox{\footnotesize\boldmath $ \mathrm{F}$})}$. 
Since $\mbox{\boldmath $\mathrm{F}$}'$ and $\mbox{\boldmath $\mathrm{F}$}$ coincide as sets, using the same argument as in the proof of Proposition \ref{proposition_1.3} (i), we deduce that ${\mathcal{X}}$ is a complete basis of rightmost index of ${\mathcal{F}}^{(\omega)}$, as required. Statement (ii) follows directly from (i).
\end{proof}
\section{Lower Row-Reduced and Row-Echelon Forms}
\label{sec:RowFiniteMatricesInLRRF}
The terms ``lower" and ``upper" readily characterize matrices in echelon form, but not directly matrices in row-reduced form. However, if $A\in \RFM$ is in LRRF, there is a permutation matrix, $P$, such that  $P\cdot A$ is in LREF (see Proposition \ref{proposition LRRF_3}). A similar result holds for matrices in URRF (see Proposition \ref{proposition URRF_1}). In this regard, with an abuse of language, the terms ``lower" and ``upper" are also used to distinguish $\omega\times\omega$ matrices in row-reduced form. These forms of row-finite matrices generalize analogous results on finite matrices.
\begin{definition} \label{definition-LRRF} {\rm Let $A\in \RFM$. Let also $(A_{j})_{j\in J}$ be the sequence of nonzero rows of  $A=(a_{ki})_{(k,i)\in \omega\times\omega}$. Then $A$ is said to be in \emph{lower row-reduced form} if the following conditions are satisfied: 
i) $a_{j\rho_{j}}=1$ for all $j\in J$ (Right leading coefficients are ones).
ii) If $j\in J$ and $k\in \omega$ such that $k\not=j$, then $a_{k\rho_{j}}=0$ (A column containing a right leading one has zeros everywhere else).}
\end{definition}
In order to cover the case of row-finite $\omega\times\omega$ matrices of infinite nullity, the standard condition on finite matrices in echelon form which requires that zero rows are to be grouped together, is not included in the subsequent definition. 
\begin{definition} \label{definition-LREF} {\rm  The matrix $A\in \RFM$ is in \emph{lower row-echelon form} (LREF) if the sequence $(\rho_j)_{j\in \omega}$ of row-length of $A$ is strictly increasing.}
\end{definition}
Substituting leftmost for rightmost coefficients in Definitions \ref{definition-LRRF} and \ref{definition-LREF}, row-finite $\omega\times\omega$ matrices in URRF and in UREF are formally defined; in accord with the prevailing definitions of finite matrices in row-reduced and row-echelon form. If $A$ is in LRRF (resp. LREF) and $A\sim B$, then $A$ is said to be a LRRF (resp. LREF) of $B$. Inasmuch as row permutations on a matrix in LRRF do not affect Definition \ref{definition-LRRF}, the following Proposition is a direct consequence.
\begin{proposition} \label{proposition LRRF_0} Let $P$ be a permutation $\omega\times\omega$ matrix. If $A\in \RFM$ is in \emph{LRRF}, then $P\cdot A$ is in \emph{LRRF} and $\Right(A)=\Right(P\cdot A)$. 
\end{proposition}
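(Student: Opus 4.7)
The plan is to exploit the fact that pre-multiplying $A$ by a permutation matrix $P_\sigma$ (corresponding to a bijection $\sigma: \omega \to \omega$) merely reorders the rows of $A$, so that $(P_\sigma \cdot A)_k = A_{\sigma(k)}$. Since the conditions defining LRRF refer only to the individual rows and to the column-wise pattern around right leading ones, both of which are invariant under row reordering, the result should reduce to careful bookkeeping of indices.

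First I would establish that $\Right(A) = \Right(P \cdot A)$. The nonzero rows of $P \cdot A$ form exactly the same family of vectors in ${\mathcal{F}}^{(\omega)}$ as the nonzero rows of $A$, only with their indices permuted by $\sigma$; consequently the set $\{\maxs((P \cdot A)_k) : (P\cdot A)_k \neq \textbf{0}\}$ coincides with $\{\maxs(A_j) : A_j \neq \textbf{0}\}$. Let me denote by $J$ the indexing set of nonzero rows of $A$ and by $J' = \sigma^{-1}(J)$ the indexing set of nonzero rows of $P \cdot A$; the map $J' \ni k \mapsto \sigma(k) \in J$ is a bijection, and $\rho'_k := \maxs((P \cdot A)_k) = \maxs(A_{\sigma(k)}) = \rho_{\sigma(k)}$.

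Next I would verify condition (i) of Definition \ref{definition-LRRF}: for each $k \in J'$, the right leading coefficient of $(P\cdot A)_k$ equals $a_{\sigma(k)\rho_{\sigma(k)}}$, which is $1$ by the LRRF hypothesis on $A$. For condition (ii), fix a nonzero row $(P\cdot A)_k$ with rightmost index $\rho'_k = \rho_{\sigma(k)}$. The column indexed by $\rho_{\sigma(k)}$ in $P\cdot A$ consists of the entries $a_{\sigma(m)\rho_{\sigma(k)}}$ for $m\in \omega$. By the LRRF condition on $A$, we have $a_{i\rho_{\sigma(k)}} = 0$ for every $i \neq \sigma(k)$, and so $a_{\sigma(m)\rho_{\sigma(k)}} = 0$ for every $m$ with $\sigma(m) \neq \sigma(k)$, i.e.\ for every $m \neq k$ (by the injectivity of $\sigma$). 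This is condition (ii) for $P\cdot A$.

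The entire argument is essentially tautological; the only obstacle is notational, namely keeping the bijection $\sigma$ and the relabeling of $J$ to $J'$ transparent. No new structural fact is needed beyond the observation that left multiplication by $P_\sigma$ permutes rows while preserving each column as an indexed family up to reindexing.
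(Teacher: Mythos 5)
Your argument is correct and takes exactly the route the paper intends: the paper gives no explicit proof, stating the proposition as a ``direct consequence'' of the observation that row permutations do not affect Definition~\ref{definition-LRRF}, and your verification of conditions (i) and (ii) under the relabeling $k\mapsto\sigma(k)$ is precisely the bookkeeping being elided. Nothing is missing.
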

\begin{proposition} \label{proposition LRRF_1} Let $A=(a_{ki})_{(k,i)\in \omega\times\omega}$ be in \emph{LRRF}. Then the map $\rho:J\mapsto \Right(A)$ of row-length of $A$ is bijective.
\end{proposition}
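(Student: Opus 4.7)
\smallskip
\noindent \textbf{Proof plan.} My plan is to unpack the two properties that define LRRF and observe that they immediately force both surjectivity and injectivity of the map $\rho: J \mapsto \Right(A)$, where $J$ denotes the indexing set of nonzero rows of $A$.

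Surjectivity is essentially by construction. First I would recall that, by the definition of the set of rightmost index given in the preliminaries, $\Right(A)$ coincides with $\Right((A_j)_{j\in J})=\{\maxs(A_j):j\in J\}$, since the zero rows contribute nothing (the convention $\maxs(\mathbf{0})=-1$ places them outside $\omega$). Hence $\Right(A)$ is precisely the image of $\rho$, and $\rho: J\mapsto \Right(A)$ is surjective.

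For injectivity I would argue by contradiction. Suppose $j_0, j_1 \in J$ are distinct with $\rho_{j_0} = \rho_{j_1} = n$. Applying condition (i) of Definition \ref{definition-LRRF} to the row $A_{j_1}$ yields $a_{j_1 n}=a_{j_1 \rho_{j_1}}=1$. On the other hand, condition (ii) of the same definition, applied with $j=j_0\in J$ and $k=j_1\neq j_0$, gives $a_{j_1 \rho_{j_0}}=a_{j_1 n}=0$. The resulting identity $1=0$ contradicts the fact that $\mathcal{F}$ is a field, so $\rho$ must be injective.

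There is no substantive obstacle here; the entire argument is a direct reading of the LRRF conditions, and the only thing to be a little careful about is the observation (needed for surjectivity) that $\Right(A)$ as defined for matrices consists only of the rightmost indices of the \emph{nonzero} rows, so that $\rho$ with domain $J$ is genuinely surjective onto $\Right(A)$.
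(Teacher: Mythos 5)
Your proof is correct and follows essentially the same route as the paper: surjectivity is noted to hold by definition of $\Right(A)$ as the image of the map of rightmost index over the nonzero rows, and injectivity is obtained by the same contradiction, pitting Definition \ref{definition-LRRF}(i) against Definition \ref{definition-LRRF}(ii) on a single entry to force $1=0$. Your extra remark clarifying why the zero rows do not obstruct surjectivity is a harmless elaboration of what the paper dispatches with ``by definition.''
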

\begin{proof} By definition $\rho$ is surjective. Let $k,j\in J$ such that $k\not=j$ and $\rho_{k}=\rho_{j}=m$. Definition \ref{definition-LRRF}(i) implies that $a_{km}=a_{k \rho_{k}}=1$. As $k\not=j$, Definition \ref{definition-LRRF}(ii) implies that $a_{km}=a_{k \rho_{j}}=0$, which is impossible. Thus $\rho$ is also injective.
\end{proof}
Since row-finite matrices in LRRF and LREF are associated with injective maps of row-length, Corollary \ref{corollary_1.4} entails the following statement.
\begin{proposition} \label{proposition LREF_1} Let $A\in\RFM$ be in \emph{LRRF} or in \emph{LREF}. \emph{i)} $\Right(A)=\Right(\RS(A))$ and $\rho: J\mapsto \Right(\RS(A))$ is bijective. \emph{ii)} The set of nonzero rows of $A$ is a complete basis of  row-length of $\RS(A)$. \emph{iii)} ${\rm card}(W)=\nul (A)$.
\end{proposition}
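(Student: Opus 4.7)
The plan is to reduce all three statements to a single observation: whether $A$ is in LRRF or in LREF, the map $\rho : J \to \Right(A)$ of row-length of the nonzero rows is injective. Once this is established, Corollary \ref{corollary_1.4} and Definition \ref{def-complete basis} deliver all three conclusions essentially for free; there is no deep obstacle.

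First I would handle the two cases for injectivity of $\rho$. If $A$ is in LRRF, Proposition \ref{proposition LRRF_1} already gives that $\rho : J \to \Right(A)$ is bijective, so in particular injective. If $A$ is in LREF, then by Definition \ref{definition-LREF} the sequence of row-lengths of the nonzero rows is strictly increasing (the restriction of the strictly increasing sequence $(\rho_j)_{j\in\omega}$ to $J$), hence $\rho$ is injective as a map on $J$. This is the sole point where the two hypotheses come in.

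With $\rho$ injective, Corollary \ref{corollary_1.4}(i) applied with $\mbox{\boldmath $\mathrm{F}$}=(A_j)_{j\in J}$ yields that $(A_j)_{j\in J}$ is a complete basis of rightmost index of $\RS(A)$, which is exactly statement (ii). Unfolding Definition \ref{def-complete basis}, being a complete basis of rightmost index means that $\Right((A_j)_{j\in J})=\Right(\RS(A))$; since by convention $\Right(A)$ is the set $\Right((A_j)_{j\in J})$ of rightmost indices of the nonzero rows, this reads $\Right(A)=\Right(\RS(A))$. Combined with the surjectivity of $\rho$ onto $\Right(A)$ from its very definition and the injectivity just established, we obtain that $\rho : J \to \Right(\RS(A))$ is a bijection, which is statement (i).

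Finally, statement (iii) is immediate from Corollary \ref{corollary_1.4}(ii): under the same injectivity hypothesis, $\card(W)=\nul(A)$ where $W=\omega\setminus J$ is the indexing set of zero rows. Since nothing further is required, the proof is essentially just a clean dispatch to the earlier results; the only judgment call is noticing that the LREF hypothesis, stated for the full sequence $(\rho_j)_{j\in\omega}$, indeed restricts to injectivity on the nonzero-row indices $J$, which is straightforward.
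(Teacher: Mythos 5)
Your proposal is correct and follows essentially the same route as the paper, which justifies the Proposition in one line by noting that matrices in LRRF or LREF have injective maps of row-length (via Proposition \ref{proposition LRRF_1} in the first case and Definition \ref{definition-LREF} in the second) and then invoking Corollary \ref{corollary_1.4}. Your additional unpacking of how statement (i) follows from Definition \ref{def-complete basis} together with the surjectivity of $\rho$ onto $\Right(A)$ is a faithful elaboration of what the paper leaves implicit.
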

As shown below, the existence of a row equivalent LRRF of every matrix in $\RFM$ can be deduced from Theorem \ref{existence-uniqueness of QHFs}(i).
\begin{theorem}[Existence] \label{QH is in LRRF} A row-finite $\omega\times\omega$ matrix $H$ in \emph{QHF} is simultaneously in \emph{LRRF} and in \emph{LREF}. Conversely, a row-finite $\omega\times\omega$ matrix in \emph{LRRF} and in \emph{LREF} is in \emph{QHF}.
\end{theorem}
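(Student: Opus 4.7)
The plan is to prove each direction by matching, essentially line-by-line, the four defining conditions of a Hermite basis (Definition~\ref{definition-Hermite basis}) against the two LRRF conditions (Definition~\ref{definition-LRRF}) and the LREF condition (Definition~\ref{definition-LREF}). I would fix $H=(h_{ki})_{(k,i)\in\omega\times\omega}\in\RFM$, write $J$ for the index set of the nonzero rows, $W=\omega\setminus J$, and $(\rho_j)_{j\in J}$ for the rightmost index sequence of $(H_j)_{j\in J}$.

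For the forward direction, assuming that $H$ is in QHF so that $(H_j)_{j\in J}$ is the Hermite basis of $\RS(H)$, I would read off LREF directly from Hermite condition (ii), and LRRF condition (i) from Hermite condition (iii). The only nontrivial verification is LRRF condition (ii), which requires $h_{k\rho_j}=0$ for every $j\in J$ and every $k\in\omega\setminus\{j\}$. I would split on the sign of $k-j$: the case $k>j$ is precisely Hermite (iv); if $k<j$ and $k\in W$ the row $H_k$ vanishes so there is nothing to check; and if $k<j$ with $k\in J$, the strictly increasing property (Hermite (ii)) forces $\rho_k<\rho_j$, so the column $\rho_j$ lies strictly past $\maxs(H_k)$ and $h_{k\rho_j}=0$ follows.

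For the converse I would assume $H$ is simultaneously in LRRF and LREF and verify the Hermite-basis conditions for $(H_j)_{j\in J}$: (iii) is LRRF (i), (ii) is the LREF hypothesis, and (iv) is the special case $k>j$ of LRRF (ii). The spanning condition (i) is then immediate from Proposition~\ref{proposition LREF_1}(ii), which asserts that the nonzero rows of any matrix in LRRF or LREF form a complete basis of row-length of $\RS(H)$; in particular they span. The only subtlety worth flagging is the $k<j$ branch of the forward direction, where LRRF (ii) is strictly stronger than Hermite (iv) and must be pieced together from the strict monotonicity of $\rho$ together with the trivial vanishing of zero rows. Otherwise the argument is a purely definitional translation, relies only on previously established material, and presents no essential obstacle.
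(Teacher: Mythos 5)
Your proposal is correct and follows essentially the same route as the paper's proof: the forward direction hinges on exactly the same case split (Hermite (iv) for $k>j$, vanishing of zero rows, and strict monotonicity of $\rho$ forcing $h_{k\rho_j}=0$ for $k<j$ with $k\in J$), and the converse is the same definitional translation, with the spanning condition being immediate since $\RS(H)$ is by definition the span of the rows. No gaps.
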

\begin{proof} As the sequence of row-length $(\rho_{j})_{j\in J}$ of $H$ is strictly increasing, it follows that $k<j \Rightarrow \rho_{k}<\rho_{j}$. The latter inequality implies that $a_{k \rho_{j}}$ is positioned further to the right than the rightmost one of the $k$-th row and so $a_{k \rho_{j}}=0$ for all $k\in J$ with $k<j$. Also, $a_{k \rho_{j}}=0$ for all $k\in W$. In view of Definition \ref{definition-Hermite basis}(iv), the condition (ii) of Definition \ref{definition-LRRF} follows. As the condition (i) of Definition \ref{definition-LRRF} is trivially satisfied, the direct implication follows. The converse implication follows directly from the definitions.
\end{proof}
\begin{proposition} \label{proposition LRRF_3}
Let $A\in \RFM$ and $A\not=\mbox{\boldmath $\mathrm{0}$}$. If the map $\rho:J\mapsto \Right(A)$ of row-length of $A$ is injective, there is a permutation $\omega\times\omega$ matrix $P$ such that $P\cdot A$ is in \emph{LREF}.
\end{proposition}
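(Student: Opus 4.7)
The plan is to apply Theorem~\ref{Relabeling} directly to the row-length map and then extend the resulting bijection on $J$ to a permutation of $\omega$ by fixing the zero-row indices. Since $A\neq \textbf{0}$, the set $J$ of nonzero-row indices is nonempty, and by hypothesis the map $\rho:J\mapsto \Right(A)\subseteq\omega$ is injective. Thus Theorem~\ref{Relabeling} applied with $\phi=\rho$ produces a bijection $\tau:J\mapsto J$ such that $(\rho_{\tau_i})_{i\in J}$ is strictly increasing in the natural order on $J$.

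Next, I extend $\tau$ to $\sigma:\omega\mapsto\omega$ by setting $\sigma|_{J}=\tau$ and $\sigma|_{W}={\rm id}_{W}$, where $W=\omega\setminus J$. Then $\sigma$ is a bijection of $\omega$ stabilizing $J$ and $W$ separately. Let $P=P_{\sigma}$ be the associated permutation matrix, which lies in $\RFM$ by the discussion of Section~\ref{sec:PermutationMatrices}. The $i$-th row of $P\cdot A$ is $A_{\sigma_i}$, so for $i\in J$ it equals the nonzero row $A_{\tau_i}$ of row-length $\rho_{\tau_i}$, and for $i\in W$ it equals the zero row. Consequently the nonzero rows of $P\cdot A$ are indexed by $J$ itself and carry the row-length sequence $(\rho_{\tau_i})_{i\in J}$, which is strictly increasing by construction; hence $P\cdot A$ satisfies Definition~\ref{definition-LREF}.

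The single conceptual point requiring care is that Definition~\ref{definition-LREF} was stated precisely without requiring the zero rows to be grouped together, and the construction above must respect this: each zero row is left at whichever index $\sigma$ assigns to it rather than being collected into an initial segment of $\omega$. Any attempt to pack all of $W$ ``above'' $J$ in the case where both $J$ and $W$ are infinite would force the row indexing set of the permuted matrix to have ordinal type exceeding $\omega$, taking us outside $\RFM$. Beyond this observation, the proof is little more than an invocation of Theorem~\ref{Relabeling} together with the standard row action of permutation matrices, so I do not anticipate a substantive obstacle.
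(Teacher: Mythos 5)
Your proposal is correct and follows essentially the same route as the paper's own proof: invoke Theorem~\ref{Relabeling} on the injective row-length map to get a bijection of $J$ rearranging the nonzero rows into strictly increasing length, extend it by the identity on $W$ to a bijection of $\omega$, and read off that the resulting permutation matrix puts $A$ into LREF. Your closing remark about not grouping the zero rows (to stay within $\RFM$) is consistent with the paper's definition of $\mu$ in equation~(\ref{bijection}) and with the discussion surrounding Example~\ref{example grouping zero rows}.
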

\begin{proof} 
It follows from $A\not=\mbox{\boldmath $\mathrm{0}$}$ that $J\not=\emptyset$. Moreover, as $\rho$ is injective, Theorem \ref{Relabeling} implies the existence of a bijection $\sigma:J\mapsto J$ such that $(\rho_{\sigma_j})_{i\in J}$ is strictly increasing. Let 
\begin{equation} \label{bijection}
\mu_i=\left\{\begin{array}{cc} \sigma_i,  & \ {\rm if} \ i\in J \\
                                         i,   &  \ {\rm if} \  i\in W    
\end{array}\right. 
\end{equation}
On account of $J\cap W=\emptyset$ and $J\cup W=\omega$, as $\mu_{j}=\sigma_{j}\in J$ for all $j\in J$ and $\mu:W \mapsto W$ is the identity, we conclude that $\mu: \omega\mapsto \omega$ is bijective. Thus $\mu$ defines a permutation $\omega\times\omega$ matrix $P$ such that $\maxs(P\cdot A)_{\mu_j}=\rho_{\sigma_j}$, $j\in J$ and so the sequence of row-length of $P\cdot A$ is strictly increasing, as claimed. 
\end{proof}
\begin{corollary} \label{corollary LRRF_4}
If $A\in \RFM$ is in \emph{LRRF}, there is a permutation matrix $P$ such that $P\cdot A$ is both in \emph{LRRF} and \emph{LREF} \emph{(}or in \emph{QHF}\emph{)} and $P\cdot A\sim A$.
\end{corollary}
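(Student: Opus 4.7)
The plan is to chain together the three preceding results (Propositions \ref{proposition LRRF_0}, \ref{proposition LRRF_1}, \ref{proposition LRRF_3}) together with Theorem \ref{QH is in LRRF}; the argument is almost purely a composition. First I would dispose of the trivial case $A=\mathbf{0}$ by taking $P=\mathbf{I}$, in which case $P\cdot A = \mathbf{0}$ is vacuously in LRRF and LREF and is certainly row equivalent to $A$.

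Assume now $A\neq\mathbf{0}$. The key starting point is Proposition \ref{proposition LRRF_1}, which, because $A$ is in LRRF, tells us that the row-length map $\rho:J\mapsto \Right(A)$ is bijective, hence in particular injective. This is exactly the hypothesis needed to invoke Proposition \ref{proposition LRRF_3}, which yields a permutation $\omega\times\omega$ matrix $P$ such that $P\cdot A$ is in LREF. Then Proposition \ref{proposition LRRF_0} guarantees that row-permuting a matrix in LRRF keeps it in LRRF, so $P\cdot A$ is simultaneously in LRRF and LREF. Theorem \ref{QH is in LRRF} then immediately upgrades this to $P\cdot A$ being in QHF.

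It remains to note $P\cdot A \sim A$. This is immediate from the paragraph on permutation matrices: every permutation matrix is non-singular in $\RFM$, with inverse again a permutation matrix, so the identity $P\cdot A = P\cdot A$ exhibits the required non-singular left multiplier in the definition of left association (\ref{change of basis}).

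There is essentially no obstacle here; the only subtlety worth mentioning is the bookkeeping with the zero-row index set $W$. Proposition \ref{proposition LRRF_3} is stated only for $A\neq\mathbf{0}$ and it uses the extension (\ref{bijection}) of $\sigma:J\mapsto J$ to a bijection $\mu:\omega\mapsto \omega$ by fixing $W$ pointwise; this is precisely why the zero rows of $A$ do not interfere with the reordering of nonzero rows, and is the reason we can conclude in one stroke that the resulting $P\cdot A$ is in both canonical forms. Hence no further verification beyond citing the listed results is required.
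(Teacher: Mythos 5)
Your proposal is correct and follows exactly the paper's own argument: Proposition \ref{proposition LRRF_1} gives injectivity of the row-length map, Proposition \ref{proposition LRRF_3} supplies the permutation matrix, Proposition \ref{proposition LRRF_0} preserves the LRRF, Theorem \ref{QH is in LRRF} yields the QHF, and non-singularity of $P$ gives $P\cdot A\sim A$. Your explicit handling of the case $A=\mathbf{0}$ (which Proposition \ref{proposition LRRF_3} excludes) is a small extra care the paper leaves implicit, but it does not change the route.
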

\begin{proof} Proposition \ref{proposition LRRF_1} implies that the map of row-length of $A$ is injective and so Proposition \ref{proposition LRRF_3} entails that there exists a permutation matrix $P$ such that $P\cdot A$ is in LREF. Proposition \ref{proposition LRRF_0} entails that $P\cdot A$ is in LRRF too, and $A\sim P\cdot A$, since $P$ is non-singular. Thus Theorem \ref{QH is in LRRF} implies the assertion. 
\end{proof}
\begin{theorem}[Uniqueness] \label{uniqueness of LRRF} Let $A, B\in \RFM$ be in \emph{LRRF} and $A\sim B$. There exists a permutation matrix $P$ such that $A=P\cdot B$.
\end{theorem}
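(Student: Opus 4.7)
The plan is to construct the permutation explicitly by matching nonzero rows of $A$ and $B$ according to their rightmost indices, and matching zero rows arbitrarily. First, since $A\sim B$, Proposition \ref{proposition_3.1} gives $\RS(A)=\RS(B)$ and $\nul(A)=\nul(B)$. Applying Proposition \ref{proposition LREF_1} to each matrix yields the chain
\[
\Right(A)=\Right(\RS(A))=\Right(\RS(B))=\Right(B),
\]
the bijectivity of $\rho_A\colon J_A\to \Right(A)$ and $\rho_B\colon J_B\to \Right(B)$, and $\card(W_A)=\nul(A)=\nul(B)=\card(W_B)$. Composing these two bijections yields a well-defined bijection $\tau\colon J_A\to J_B$ singled out by the requirement $\maxs(B_{\tau(j)})=\maxs(A_j)$.

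The crux is to show that $A_j=B_{\tau(j)}$ for every $j\in J_A$. Set $k=\tau(j)$, $\rho=\maxs(A_j)=\maxs(B_k)$, and $D=A_j-B_k$. By Definition \ref{definition-LRRF}(i) both right leading coefficients equal $1$, so either $D=\mbox{\boldmath $\mathrm{0}$}$ or $\maxs(D)<\rho$. Assume for contradiction that $D\neq\mbox{\boldmath $\mathrm{0}$}$, and let $m=\maxs(D)$. Since $D\in\RS(A)=\RS(B)$ and $\Right(A)=\Right(\RS(A))=\Right(B)$, there exist $j'\in J_A$ and $k'\in J_B$ with $\maxs(A_{j'})=m=\maxs(B_{k'})$. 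Because $m<\rho$, we have $j'\neq j$ and $k'\neq k$, so Definition \ref{definition-LRRF}(ii) applied at the pivot column $m$ of $A$ forces $a_{jm}=0$, and applied at the pivot column $m$ of $B$ forces $b_{km}=0$. Hence the coefficient of $\mbox{\boldmath $\mathrm{e}$}_m$ in $D$ vanishes, contradicting $\maxs(D)=m$.

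To finish, pick a bijection $\tau'\colon W_A\to W_B$; this is available without invoking countable choice because $W_A,W_B\subset\omega$ are equinumerous and so their increasing enumerations determine a canonical order-isomorphism. Let $\sigma\colon\omega\to\omega$ be the bijection that agrees with $\tau$ on $J_A$ and with $\tau'$ on $W_A$, and let $P$ be the associated permutation matrix (which lies in $\RFM$ by the discussion following Theorem \ref{Relabeling}). Then $(P\cdot B)_i=B_{\sigma(i)}=A_i$ for every $i\in\omega$: the nonzero case follows from the previous paragraph, and the zero case is trivial. Thus $A=P\cdot B$. The only substantive step is the identity $A_j=B_{\tau(j)}$, which is the place where the full strength of the LRRF hypothesis—the coincidence of pivot columns plus the isolation of the pivot entries in those columns—is used.
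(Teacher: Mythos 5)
Your proof is correct, but it takes a genuinely different route from the paper. The paper's proof is a two-line reduction: it applies Corollary \ref{corollary LRRF_4} to replace $A$ and $B$ by row-permuted copies $Q_1=R\cdot A$, $Q_2=S\cdot B$ in QHF, invokes the uniqueness half of Theorem \ref{existence-uniqueness of QHFs} (Fulkerson's result, stated in the paper without proof) to get $Q_1=V\cdot Q_2$, and assembles $P=R^{-1}\cdot V\cdot S$. You instead argue from first principles: you build the permutation explicitly by matching nonzero rows through the bijections $\rho_A,\rho_B$ onto the common set $\Right(\RS(A))$, and the heart of your argument---that $D=A_j-B_{\tau(j)}$ must vanish because any nonzero $D$ would have $\maxs(D)=m\in\Right(A)=\Right(B)$ while Definition \ref{definition-LRRF}(ii) kills the $m$-th coordinate of both $A_j$ and $B_{\tau(j)}$---is sound, as is the handling of the zero rows via the canonical order isomorphism between the equinumerous sets $W_A,W_B\subset\omega$. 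What your approach buys is self-containment: it does not lean on Fulkerson's QHF uniqueness theorem, and in fact, combined with Theorem \ref{QH is in LRRF}, it yields an independent proof of Theorem \ref{existence-uniqueness of QHFs}(ii) itself. What the paper's approach buys is brevity and the conceptual point that LRRF uniqueness is nothing more than QHF uniqueness transported along row permutations.
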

\begin{proof} In view of Corollary \ref{corollary LRRF_4}, there exist permutation matrices $R,S$ such that $Q_1=R\cdot A$ and $Q_2=S\cdot B$, where $Q_1, \ Q_2$ are QHFs  of $A$, $B$, respectively. As $Q_1\sim A\sim B \sim Q_2$, Theorem \ref{existence-uniqueness of QHFs} implies that $Q_1=V\cdot Q_2$ for some permutation matrix $V$. Thus $A=R^{-1}\cdot Q_1=R^{-1}\cdot V\cdot Q_2=R^{-1}\cdot V\cdot S\cdot B$. Since $P=R^{-1}\cdot V\cdot S$ is a permutation matrix, the assertion follows.
\end{proof}
The above statement extends Theorem \ref{existence-uniqueness of QHFs} (ii) to matrices in LRRF. It entails that two row equivalent row-finite $\omega\times\omega$ matrices in LRRF may be different sequences of row-vectors but they are identical as sets of row-vectors. This justifies the ``almost" uniqueness characterization of matrices in LRRF. 
\section{Infinite Gauss-Jordan Elimination Algorithm}
\label{sec:TheInfiniteGauss-JordanEliminationAlgorithm}
The infinite Gauss-Jordan algorithm is primarily designed to generate a sequence of matrices in LRRF, excluding row permutations, in order to meet the theoretical purposes of this paper (see Section \ref{sec:TheInfiniteGJEliminationAndTheRowEquivalence}). At this stage of analysis, no qualitative distinction is made between leftmost and rightmost pivot strategies (LPS and RPS), since both strategies equally work on finite matrices, while the resulting matrix, upon algorithm completion, is not under consideration yet.
\begin{definition}
A \emph{top submatrix} of $A\in \RFM$ of order $n$ is the finite sequence of the first $n+1$ rows of $A$ denoted by $A\!\!\mid_n=(A_k)_{0\le k \le n}$.
\end{definition}
\textbf{The Algorithm:} Let $A\in \RFM$ have at least two nonzero rows. Otherwise, $A$ consists of at most one nonzero row, thus being both in LRRF and in URRF and the process ends. Let $A_{n_0}, A_{n_1}$ be the first and second nonzero rows of $A$ respectively. Starting with the top submatrix $A\!\!\mid_{n_1}$, the rightmost (resp. leftmost) nonzero entry of $A_{n_0}$ is used as a pivot to clear the corresponding entry (entry in the same column) of $A_{n_1}$  (elimination elementary operation). Let $G_{n_1}$ be the resulting row. If $G_{n_1}\not=\textbf{0}$, the rightmost (resp. leftmost) nonzero entry of $G_{n_1}$ is used as a pivot to clear the corresponding entry of $A_{n_0}$. Otherwise, the algorithm leaves a zero row and continues to the next step. Leading coefficients are normalized to one (scaling elementary operation) and the resulting matrix is a LRRF (resp. URRF) of $A\!\!\mid_{n_1}$. The reduced matrix is augmented by successive rows of the remaining rows of $A$ up to and including the first nonzero row encountered. Let $A_{n_i}$ be a new included row. The \emph{Gaussian elimination} uses the nonzero rows of the previously reduced matrix as pivot rows to clear corresponding entries of $A_{n_i}$ resulting in $G_{n_i}$. If $G_{n_i}\not=\textbf{0}$, the \emph{Jordan elimination} uses the leading coefficient of $G_{n_i}$ as a pivot to clear the entries in the column above this pivot. Otherwise, the algorithm leaves a zero row and continues to the next step. Upon completion of \emph{normalization}, the $n_i$ stage of the process ends with a matrix in LRRF (resp. URRF). This sequential process continues in this manner ad infinitum.

Each stage of the algorithmic process, as described above, is divided in three parts: Gaussian elimination, Jordan elimination and normalization. 
\begin{remark} \rm{ In the standard version of the Gaussian elimination, row permutations are used to find a pivot to avoid a break down, whenever a zero appears as a candidate for a pivot. However, the approach adopted here avoids row permutations in searching for a pivot, by using as pivots the rightmost (resp. leftmost) coefficients of nonzero rows already created in previous stages. Evidently, the algorithm presented in this Section also works on finite matrices. For computational and programming needs, we can also work with finite augmented matrices by defining as column dimension of each augmented matrix the integer $\mbox{\scriptsize $\mathrm{N}$}_i= \max\{\mbox{\scriptsize $\mathrm{N}$}_{i-1}, \maxs (A_{n_i})\}$, starting with $\mbox{\scriptsize $\mathrm{N}$}_1= \max\{\maxs (A_{n_0}), \maxs (A_{n_1})\}$.} 
\end{remark}
\section{LRRFs versus URRFs and RPS versus LPS}
\label{sec:LeftVersusRightPivoting} 
An intrinsic defect of row-finite matrices in URRF is analyzed in the present Section in connection with the infinite Gauss-Jordan elimination algorithm implemented with LPS. Example \ref{example1} shows that such defect is not due to an inherent weakness in the algorithm itself, but rather to the essence of row-finite $\omega\times\omega$ matrices in URRF, which do not preserve row-equivalence. 
\begin{proposition} \label{proposition URRF_1} Let $A\in \RFM$ be in \emph{URRF}. Then there is a permutation matrix $P$ such that $P\cdot A$ is in \emph{UREF} and in \emph{URRF} and $\Left(A)=\Left(P\cdot A)$.
\end{proposition}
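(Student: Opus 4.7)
The plan is to mirror the LRRF argument used in Proposition \ref{proposition LRRF_3} and Corollary \ref{corollary LRRF_4}, replacing ``rightmost'' by ``leftmost'' throughout, and then to verify the extra claim $\Left(A)=\Left(P\cdot A)$ from the fact that $P$ merely permutes the rows.

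First I would establish the leftmost analog of Proposition \ref{proposition LRRF_1}: if $A=(a_{ki})$ is in URRF and $(A_j)_{j\in J}$ are its nonzero rows, then the map $\zeta:J\mapsto \Left(A)$ of leftmost index is injective (hence bijective onto $\Left(A)$). This is immediate from the defining conditions for URRF, exactly as in the proof of Proposition \ref{proposition LRRF_1}: if $\zeta_k=\zeta_j=m$ for distinct $k,j\in J$, then the left leading one $a_{jm}=1$ would contradict the URRF requirement that the column of a left leading one is zero elsewhere.

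Next, since $\zeta:J\to\omega$ is injective and $J\ne\emptyset$ (if $A\ne 0$; the zero case is trivial with $P=\textbf{I}$), Theorem \ref{Relabeling} supplies a bijection $\sigma:J\mapsto J$ such that $(\zeta_{\sigma_j})_{j\in J}$ is strictly increasing. I extend $\sigma$ to $\mu:\omega\mapsto\omega$ by the recipe (\ref{bijection}), taking $\mu$ to be the identity on $W=\omega\setminus J$; as in the proof of Proposition \ref{proposition LRRF_3}, $\mu$ is a bijection on $\omega$ and determines a permutation matrix $P\in\RFM$. Pre-multiplication by $P$ reorders rows via $\mu$, so the nonzero rows of $P\cdot A$ are $(A_{\sigma_j})_{j\in J}$ with strictly increasing leftmost indices $(\zeta_{\sigma_j})_{j\in J}$. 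By the leftmost analog of Definition \ref{definition-LREF}, this means $P\cdot A$ is in UREF.

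Finally, I invoke the leftmost analog of Proposition \ref{proposition LRRF_0}: row permutations do not affect Definition \ref{definition-LRRF} with ``leftmost'' in place of ``rightmost'', so $P\cdot A$ remains in URRF. The equality $\Left(A)=\Left(P\cdot A)$ is immediate, since $P$ only reorders rows and the leftmost index of each individual row is preserved; thus the set of leftmost indices of nonzero rows is unchanged. No step here presents a real obstacle — the only subtlety is the routine verification that each of Propositions \ref{proposition LRRF_0} and \ref{proposition LRRF_1} transfers verbatim from the rightmost to the leftmost setting, which it does because the URRF conditions are symmetric to the LRRF conditions under interchanging $\rho$ and $\zeta$.
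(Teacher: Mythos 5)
Your proposal is correct and follows exactly the route the paper takes: the paper's proof simply says to rephrase Propositions \ref{proposition LRRF_0}, \ref{proposition LRRF_1} and \ref{proposition LRRF_3} with URRF/UREF in place of LRRF/LREF and then proceed as in Corollary \ref{corollary LRRF_4}, which is precisely the argument you spell out (including the use of Theorem \ref{Relabeling} and the extension $\mu$ in (\ref{bijection})). You merely supply the details the paper leaves implicit, including the observation that $\Left(A)=\Left(P\cdot A)$ follows from the leftmost analog of Proposition \ref{proposition LRRF_0}.
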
 
\begin{proof} We can rephrase Propositions \ref{proposition LRRF_0}, \ref{proposition LRRF_1}, \ref{proposition LRRF_3} by using URRF and UREF in place of LRRF and LREF respectively. Then proceeding as in the proof of Corollary \ref{corollary LRRF_4}, the assertion follows.
\end{proof}
\begin{proposition} \label{proposition UREF_1} If $B,C\in \RFM$ are both in \emph{UREF} and $B\sim C$, then $\Left(B)=\Left(C)$. 
\end{proposition}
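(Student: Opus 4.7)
The plan is to reduce the claim to two facts: (a) row-equivalent matrices have the same row space, and (b) for any matrix in UREF the set of leftmost indices of its rows coincides with the set of leftmost indices of its row space. The conclusion then follows from the chain of equalities
\[
\Left(B)=\Left(\RS(B))=\Left(\RS(C))=\Left(C).
\]

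First I would invoke Proposition \ref{proposition_3.1}: since $B\sim C$, the matrices share the same row space, $\RS(B)=\RS(C)$. This immediately gives $\Left(\RS(B))=\Left(\RS(C))$, which is the middle equality.

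Next I would establish the outer equalities. Let $(B_j)_{j\in J}$ denote the nonzero rows of $B$. Because $B$ is in UREF, the sequence $(\zeta_j)_{j\in J}$ of leftmost indices of the nonzero rows of $B$ is strictly increasing, and in particular injective. Applying Proposition \ref{proposition_1.3}(ii) (the leftmost-index analog of Proposition \ref{proposition_1.3}(i)) with $\mbox{\boldmath $\mathrm{F}$}=(B_j)_{j\in J}$ and $\mathcal{Z}=\RS(B)$, the family $(B_j)_{j\in J}$ is a complete basis of leftmost index of $\RS(B)$, which by Definition \ref{def-complete basis} means $\Left(B)=\Left(\RS(B))$. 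The identical argument applied to $C$ gives $\Left(C)=\Left(\RS(C))$.

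Combining the three equalities yields $\Left(B)=\Left(C)$, as required. There is no real obstacle here: the proof is a direct specialization to the leftmost-index setting of the same line of reasoning used in Proposition \ref{proposition LREF_1} for LREF matrices, and the only ingredient beyond the existing propositions is the trivial observation that a strictly increasing sequence is injective, which is what activates Proposition \ref{proposition_1.3}(ii).
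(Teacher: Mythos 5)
Your proof is correct and follows essentially the same route as the paper's: both obtain $\RS(B)=\RS(C)$ from row equivalence and sandwich it between $\Left(B)=\Left(\RS(B))$ and $\Left(C)=\Left(\RS(C))$, the latter equalities coming from the injectivity of the leftmost-index map of a UREF matrix. The only cosmetic difference is that you invoke Proposition \ref{proposition_1.3}(ii) directly, whereas the paper cites Corollary \ref{corollary_1.4}, which is just that proposition specialized to the rows of a matrix.
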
 
\begin{proof} Corollary \ref{corollary_1.4} implies that $\Left(B)=\Left(\RS(B))$ and $\Left(C)=\Left(\RS(C))$. As $B\sim C$, it follows that $\RS(B)=\RS(C)$, whence $\Left(B)=\Left(\RS(B))=\Left(\RS(C))=\Left(C)$, as claimed.
\end{proof}
The following Example concludes the discussion of this Section. 
\begin{example}[Counterexample] \label{example1}
{\rm Let us consider the row-finite and column-finite $\omega\times\omega$ matrix:
\begin{equation} \label{ex1.matr2}
A= \left( \begin{array}{cccccc} 
1 & 1 & 0 & 0 & 0 &...\\
0 & 1 & 1 & 0 & 0 &...\\
0 & 0 & 1 & 1 & 0 &...\vspace{-0.05in}\\
. & . & . & . & . &...
\end{array}   \right) 
\end{equation} 
The infinite Gauss-Jordan algorithm is implemented with LPS to row reduce $A$. After including the $n$-th row of $A$, the resulting matrix in URRF is of the form:
\begin{equation} \label{eq-ex1.matrU}
U_n=\left( \begin{array}{llllccc}
1 & 0  &...& 0 & (-1)^{n-1} \vspace{0.05in}& 0 &... \\
0 & 1  &...& 0 & (-1)^{n-2}& 0 &... \vspace{-0.05in}\\
.&.&...&.&.&.&... \\
0 & 0  & ... & 1 & 1& 0 &...\\
\end{array}
\right)
\end{equation}
We observe that, as the process progresses, the rightmost nonzero column moves further to the right. The latter entails that $U_n$ does not contain the previous reduced matrix $U_{n-1}$ as a submatrix and the length of each row increases indefinitely. This is due to the fact that, the elimination of an entry causes the simultaneous generation of a new nonzero entry further to the right in the same row, which, in turn, is vanished by a successor pivot element and so ad infinitum. For instance, two consecutive forms of the first row are given by
\[\left\{\begin{array}{ccccc} 
1,\ 0,\ 0,...,\ 0,& (-1)^{n-1} &, &  0 &, 0,...)   \vspace{0.05in}\\
1,\ 0,\ 0,...,\ 0,&                  0  &,       &(-1)^{n}&, 0,...)
\end{array}
\right.
\]
Thus we must accept either that the process does not end up with a certain outcome or that upon algorithm completion all the coefficients above rightmost nonzero entries are ultimately vanished, thus reaching the identity $\omega\times\omega$ matrix $\textbf{I}$.
However, if one follows the latter view, then must also accept that every row in $\textbf{I}$ is the result of an infinite sequence of elementary row operations. It amounts to the same to say that every row of $\textbf{I}$ must be an infinite linear combination of nonzero rows of $A$. But this event challenges row equivalence. Using the statements established earlier in this paper a formal answer to this challenge is given in what follows. Since $A$ is in LREF, Proposition \ref{proposition LREF_1} implies that $\Right(A)=\Right(\RS(A))$. On account of $\Right(A)=\{1,2,...\}$, it follows that $0\not\in \Right(\RS(A))$, whence $\mbox{\boldmath $\mathrm{e}$}_0\not\in \RS(A)$. Consequently $\RS(A)\varsubsetneqq {\mathcal{F}}^{(\omega)}=\RS(\textbf{I})$ and so Proposition \ref{proposition_3.1} 
entails that $A\not\sim \textbf{I}$. The latter is to be compared with the relevant result showing that the indexed set $\mbox{\boldmath $\mathrm{n}$}$ in Lemma \ref{lemma1} does not span ${\mathcal{F}}^{(\omega)}$.

It is shown next that the only possible URRFs of $A$ in (\ref{ex1.matr2}) are row permutations of $\textbf{I}$. Let $U$ be in URRF and $U\sim A$. Proposition \ref{proposition URRF_1} implies that there is a permutation matrix, $P$, such that $P\cdot U$ is in UREF and in URRF. As $A$ is in LREF and the indexing set of zero rows of $A$ is empty, Proposition \ref{proposition LREF_1} implies $\nul(A)=0$. Let $U^*=P\cdot U$ and $\zeta: J\mapsto \Left(U^*)$ be the map defining the sequence of 
leftmost index of $U^*$. Certainly $W=\omega\setminus J$ is the indexing set of zero rows of $U^*$. Since $\nul(A)=0$ and $A\sim U^*$, it follows from Proposition \ref{proposition_3.1} that $\nul(U^*)=0$. Formally $\card(W)\le \nul(U^*)=0$, whence $W=\emptyset$ and $J=\omega$. Whereas $A,U^*$ are in UREF and $A\sim U^*$, Proposition \ref{proposition UREF_1} implies $\Left(U^*)=\Left(A)=\omega$. As $\zeta:\omega\mapsto \omega$ is both surjective and strictly increasing, it follows that $\zeta$ is an order isomorphism on $\omega$. But the only order isomorphism from $\omega$ to itself is the identity. Hence the leftmost ones of $U^*$ are the elements of its main diagonal. Since $U^*$ is in URRF, the elements above and below the main diagonal of $U^*$ are zero and so $U^*=\textbf{I}$. Taking into account that $P^{-1}$ is a permutation matrix, the assertion follows from $P\cdot U=\textbf{I} \Leftrightarrow U=P^{-1}\cdot \textbf{I}$. Consequently $\RS(A)\varsubsetneqq \RS(\textbf{I})=\RS(U)$ and so $A\not\sim U$. 

The same arguments show the non-existence of URRF for an extensive class of non-invertible row-finite matrices such as three-diagonal infinite matrices.

Let us next apply the infinite Gauss-Jordan algorithm implemented with RPS to $A$ in (\ref{ex1.matr2}). The resulting matrix is in LRRF, as displayed below:
\begin{equation} \label{eq-ex2.matr2}
H=\left(
\begin{array}{rllllll}
 1 & 1 & 0 & 0 &...& 0 &... \vspace{0.05in}\\
 -1 & 0 & 1 & 0 &...& 0 &...\vspace{0.06in}\\
1 & 0 & 0 & 1 & ... & 0 &...\vspace{-0.05in}\\ 
 .&.&.&.&...&.&...  
\end{array}
\right) 
\end{equation}
By applying the same sequence of row operations to the identity $\omega\times\omega$ matrix $\textbf{I}$, the derived matrix, say $Q$, is given by 
\[ Q=\left( \begin{array}{rrcccc} 
1&0&0&0&...\vspace{0.05in}\\
-1&1&0&0&...\vspace{0.05in}\\
1&-1&1&0&...\vspace{-0.05in}\\
.&.&.&.&...
      \end{array}  \right)
\]
Formally $Q$ is non-singular, as being lower triangular with non-zero diagonal elements. Since $H=Q\cdot A$, the row-equivalence of $H$ and $A$ follows.} 
\end{example}
\section{The Infinite Gauss-Jordan Algorithm Implemented with RPS and the Row-Equivalence}
\label{sec:TheInfiniteGJEliminationAndTheRowEquivalence}
The main goal of this paper is established in the current Section. It is shown that the infinite Gauss-Jordan algorithm implemented with RPS ends up as a LRRF form of a row-finite $\omega\times \omega$ matrix respecting row equivalence. As a consequence, the constructiveness of a QHF is shown at the end of this Section.

From here onwards, $C\in \RFM$ and $C_n$ denotes the row of index $n$ of $C$. $L^{(i)}_n$ stands for the row of index $n$, after the application of the row of index $i$ to the previously obtained $L^{(i-1)}_n$, either by the Gaussian ($i<n$) or the Jordan elimination ($i>n$).  
The entries of \vspace{0.02in}$L^{(i)}_n$ will be denoted by $l^{(i)}_{nj}$. In the Gaussian elimination, the rows $L^{(n-1)}_i$, $0\le i\le n-1$, are successively applied to the row of index $n$, starting with the application of $L^{(n-1)}_0$ to $C_n$ resulting in $L^{(0)}_n$. Then the row $L^{(n-1)}_{1}$ is applied to $L^{(0)}_n$ resulting in $L^{(1)}_n$ and finally the row $L^{(n-1)}_{n-1}$ is applied to $L^{(n-2)}_n$ resulting in $L^{(n-1)}_n$. Aimed at normalizing the leading coefficient to one, let us define
\begin{equation} \label{leading}
G_n=\left\{\begin{array}{cl} \frac{1}{l^{(n-1)}_{n\rho_n}}L^{(n-1)}_n  & {\rm if} \  L^{(n-1)}_n\not=\textbf{0}\vspace{0.05in}\\
                               \textbf{0} &   {\rm otherwise}   
                               \end{array}\right.   
\end{equation}
where $l^{(n-1)}_{n\rho_n}$ denotes the right leading coefficient of $L^{(n-1)}_{n}$. In order to unify formulas, we shall further adopt the notation $L^{(-1)}_{n}=C_n$ and $G_n=L^{(n)}_{n}$, since \vspace{0.02in} these symbols were not previously used. 
By analogy, the entries of $G_{n}$ will be denoted by $g_{nj}$, whence \vspace{0.02in}$g_{nj}=l^{(n)}_{nj}$. 
The Jordan elimination concerns the application of $G_n$ to  $L^{(n-1)}_i$ resulting in $L^{(n)}_i$, $0\le i\le n-1$.
As no further normalization is required (see Subsection \ref{sec:JordanElimination}), upon completion of Jordan elimination, the derived matrix is in LRRF denoted by
\begin{equation} \label{eq-fmat}
{\mathcal{L}}^{(n)}=\left(\begin{array}{l}
                                           L^{(n)}_{0} \vspace{0.03in}\\
                                           L^{(n)}_{1}\vspace{-0.05in}\\
                                          ... \\
                                           L^{(n)}_{n-1} \vspace{0.03in}\\
                                           G_{n}
\end{array}\right)
\end{equation}
At this point, the $n$-th stage of the process is completed and the subsequent stage starts by including the next nonzero row of $C$.
\subsection{Gaussian Elimination}
\label{sec:GaussianElimination}
The Gaussian Elimination process carries out the reduction of $C_{n}$ to $G_{n}$. Evidently, if $C_{n}=\textbf{0}$, then $G_n=\textbf{0}$. In all that follows we shall assume that $C_{n}\not=\textbf{0}$. As $L^{(n-1)}_i$ for $0\leq i\leq n-1$ remain unchanged at this stage of process, we employ shorthand notation for the row-length of $L^{(n-1)}_i$ by setting $\rho_i=\maxs (L^{(n-1)}_i)$ (instead of $\rho^{(n-1)}_i$) and $\rho_{n}=\maxs (G_{n})$. If $L^{(n-1)}_i\not=\textbf{0}$, then these are the pivot rows used by the Gaussian elimination.

Let $0\leq i\leq n-1$, $L^{(n-1)}_i\not=\textbf{0}$ and $L^{(i-1)}_{n}\not=\textbf{0}$. The reduction of $L^{(i-1)}_{n}$ to $L^{(i)}_{n}$ through the pivot $L^{(n-1)}_i$, can be described by the relation
\begin{equation}\label{eq-GJ01}
L^{(i)}_{n}=L^{(i-1)}_{n}+\lambda^{(i-1)}_{n}L^{(n-1)}_i,
\end{equation}
where $\lambda^{(i-1)}_{n}:=-l^{(i-1)}_{n\rho_i}$. In terms of scalar coordinates, (\ref{eq-GJ01}) takes the form
\begin{equation}\label{eq-GJ001}
l^{(i)}_{nk}=l^{(i-1)}_{nk}+\lambda^{(i-1)}_{n}l^{(n-1)}_{ik}
\end{equation}
for $k\in \omega$. Formally, if $\rho_i > \maxs(L^{(i-1)}_n)$, then $l^{(i-1)}_{n\rho_i}=0$, and so if $\lambda^{(i-1)}_{n}\not=0$, then  $\rho_i\le \maxs(L^{(i-1)}_n)$. In this latter case, as $l^{(n-1)}_{i\rho_i}$ is the rightmost one of $L^{(n-1)}_i$, the $(n,\rho_i)$ entry of $L^{(i-1)}_{n}$ is eliminated by the $(i, \rho_i)$ pivot element of $L^{(n-1)}_i$, according to $l^{(i)}_{n\rho_i}=l^{(i-1)}_{n\rho_i}+\lambda^{(i-1)}_{n}l^{(n-1)}_{i\rho_i}=l^{(i-1)}_{n\rho_i}-l^{(i-1)}_{n\rho_i}\cdot 1=0$. Hence for all $i$ such that $0\leq i\leq n-1$
\begin{equation}\label{eq-GJ2}
                                              l^{(i)}_{n\rho_i}=0.
\end{equation}                                      
As a consequence, $L^{(i)}_{n}\not=L^{(i-1)}_{n}$ if and only if $L^{(n-1)}_i\not=\textbf{0}$ and $\lambda^{(i-1)}_{n}\not=0$, and we shall refer to this transition as an \emph{effective reduction} of $L^{(i-1)}_n$ to $L^{(i)}_n$ through the row pivot $L^{(n-1)}_i$. 
\newcounter{nroman2}
The following cases exhaust all the possibilities:
\begin{list} {\roman{nroman2})}{\usecounter{nroman2}\setlength{\rightmargin}{\leftmargin}}
\item If $\maxs(L^{(i-1)}_n)< \rho_i$, then $\lambda^{(i-1)}_n=0$. Hence (\ref{eq-GJ01}) gives $L^{(i)}_n=L^{(i-1)}_n$ and no effective reduction occurs. Certainly $\maxs(L^{(i-1)}_n)=\maxs(L^{(i)}_n)$.
\item If $\maxs(L^{(i-1)}_n)=\rho_i$, then $\lambda^{(i-1)}_n\not=0$. Hence (\ref{eq-GJ01}) gives $L^{(i)}_n\not=L^{(i-1)}_n$ and an effective reduction occurs. Certainly $\maxs(L^{(i-1)}_n)>\maxs(L^{(i)}_n)$.
\item If $\maxs(L^{(i-1)}_n)> \rho_i$, then an effective reduction may ($\lambda^{(i-1)}_n\not=0$) or may not ($\lambda^{(i-1)}_n=0$) occur. In both cases: $\maxs(L^{(i-1)}_n)=\maxs(L^{(i)}_n)$.
\end{list}
The above results entail the inequality:
  \begin{equation} \label{Eq-ineq}
  \begin{split} 
 \maxs(G_n)\le ...&\le\maxs(L^{(i)}_n)\le \maxs(L^{(i-1)}_n)\le ...\le\maxs(C_n). 
 \end{split}
\end{equation}

Let $0\leq j\leq n-1$ and $L^{(n-1)}_{j}\not=\textbf{0}$. Since \vspace{0.03in}${\mathcal{L}}^{(n-1)}$ is in LRRF and $l^{(n-1)}_{j\rho_{j}}$ is a rightmost one, it follows that all the entries of ${\mathcal{L}}^{(n-1)}$ positioned at $(i, \rho_{j})$ with $i \not= j$ and $0\leq i\leq n-1$ are zero, that is
\begin{equation} \label{eq-G3}
                                             l^{(n-1)}_{i \rho_{j}}=0.
\end{equation}
Setting $k=\rho_j$ in (\ref{eq-GJ001}), on account of (\ref{eq-G3}), for every $i$ such that $0\leq i < j\le n-1$, it follows that $l^{(i)}_{n\rho_{j}}=l^{(i-1)}_{n\rho_{j}}$. The latter applied with $i=0,...,j-1$ gives: $c_{n\rho_{j}}=l^{(-1)}_{n\rho_{j}}=l^{(0)}_{n\rho_{j}}=...= l^{(j-1)}_{n\rho_{j}}$.
As an immediate consequence, the factor $\lambda^{(i-1)}_n$ in (\ref{eq-GJ01}) and (\ref{eq-GJ001}) is given by
\begin{equation} \label{eq-G40}
\lambda^{(i-1)}_n=-c_{n\rho_i}.
\end{equation}

Let us now consider the case in which the index $i$ is the immediate successor of $j$ such that $L^{(i)}_n\not=L^{(j)}_n$. Typically (\ref{eq-GJ01}) takes the form 
\begin{equation}\label{eq-GJ1}
L^{(i)}_n=L^{(j)}_n+\lambda^{(j)}_nL^{(n-1)}_i,
\end{equation}
where \vspace{0.03in}$0\leq j < i\le n-1$. On account of (\ref{eq-GJ2}) and (\ref{eq-G3}), (\ref{eq-GJ1}) implies that $l^{(i)}_{n\rho_{j}}=l^{(j)}_{n\rho_{j}}+\lambda^{(j)}_nl^{(n-1)}_{i \rho_{j}}=0$\vspace{0.02in}. This entails that the Gaussian elimination does not change zero entries previously obtained by corresponding pivot rows. Accordingly
\begin{equation} \label{eq-G4}
                                             l^{(i)}_{n \rho_{j}}=0
\end{equation}
for any $i,j$ such that $0\leq j < i\leq n-1$. Setting $i=n-1$ in (\ref{eq-G4}) we infer $l^{(n-1)}_{n\rho_{j}}=0$ for every $j$ such that $0\leq j < n-1$ and so $g_{n\rho_{j}}=0$.
Moreover, if $L^{(n-1)}_{n-1}\not=\textbf{0}$, then (\ref{eq-GJ2}) applied with $i=n-1$ gives $l^{(n-1)}_{n\rho_{n-1}}=0$ and so $g_{n\rho_{n-1}}=0$. From the previous two statements, we conclude that for all $j$ such that $0 \leq j \leq n-1$
\begin{equation} \label{Eq-JG7}
                                                g_{n\rho_{j}}=0.   
\end{equation}
Trivially (\ref{eq-GJ01}) implies: $\displaystyle L^{(n-1)}_{n}=L^{(-1)}_n+\sum^{n-1}_{i=0}\lambda^{(i-1)}_nL^{(n-1)}_i$.
On account of (\ref{eq-G40}), the latter relation takes the form: 
\begin{equation}\label{Eq-GJ2}
                                  L^{(n-1)}_{n}=C_n-\sum^{n-1}_{i=0}c_{n\rho_i}L^{(n-1)}_i.
\end{equation}
The relation (\ref{Eq-GJ2}) indicates the elementary operations involved in the row reduction of $C_n$ to $L^{(n-1)}_{n}$. These operations, followed by a scaling operation, result in $G_n$, as indicated in (\ref{leading}). Let $({\mathcal{L}}^{(n-1)}:G_n)$ be the augmented matrix derived by appending the row $G_n$ below the matrix ${\mathcal{L}}^{(n-1)}$. This matrix is not in general in LRRF, since at this stage of progress the Jordan elimination has not started to operate yet. However $({\mathcal{L}}^{(n-1)}:G_n)$ shares a common property with ${\mathcal{L}}^{(n)}$, as shown below.
\begin{proposition} \label{Gpropfth} Let $J_n$ be the indexing set of nonzero rows of the matrix $({\mathcal{L}}^{(n-1)}:G_n)$. If $(\rho_{j})_{j\in J_n}$ is the sequence of row-length of $({\mathcal{L}}^{(n-1)}:G_n)$, then the map $\rho:J_n\ni i \mapsto \rho_i \in \omega$ is injective.
\end{proposition}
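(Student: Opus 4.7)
The plan is to reduce the claim to two ingredients that are already in hand: the injectivity of row-length for the matrix ${\mathcal{L}}^{(n-1)}$, which is in LRRF, and the key relation (\ref{Eq-JG7}) that has just been derived. Since nothing new has to be built, the proof should be short; its role is only to combine these two observations.

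First I would invoke Proposition \ref{proposition LRRF_1} applied to ${\mathcal{L}}^{(n-1)}$: because ${\mathcal{L}}^{(n-1)}$ is in LRRF, the restriction of $\rho$ to the indexing set of nonzero rows of ${\mathcal{L}}^{(n-1)}$ is injective. In particular, for $i,j\in J_n\setminus\{n\}$ with $i\neq j$, one has $\rho_i\neq\rho_j$.

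Next I would handle the comparison of $\rho_n$ with the other $\rho_j$'s. If $G_n=\textbf{0}$ then $n\notin J_n$ and there is nothing more to check. Otherwise $G_n\neq\textbf{0}$ and $n\in J_n$, and by definition of the right leading coefficient, $g_{n\rho_n}\neq 0$. On the other hand, by (\ref{Eq-JG7}), $g_{n\rho_j}=0$ for every $j$ with $0\le j\le n-1$ such that $L^{(n-1)}_j\neq\textbf{0}$, i.e.\ for every $j\in J_n\setminus\{n\}$. Hence $\rho_n\neq\rho_j$ for any such $j$, for otherwise $g_{n\rho_n}=g_{n\rho_j}=0$, contradicting the choice of $\rho_n$.

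Combining the two observations, $\rho$ is injective on $J_n$, which is the statement. The only potentially subtle point is the bookkeeping: making sure that the indexing set $J_n$ of the augmented matrix is exactly the union of the indexing set of nonzero rows of ${\mathcal{L}}^{(n-1)}$ together with $\{n\}$ when $G_n\neq\textbf{0}$; this is immediate from the construction of the augmented matrix. So the main (and only) obstacle is conceptual rather than technical: recognizing that (\ref{Eq-JG7}) is exactly what is needed to separate $\rho_n$ from the earlier pivot columns, because each $\rho_j$ with $j<n$ is the column of a right leading one of ${\mathcal{L}}^{(n-1)}$, and the Gaussian elimination has forced $G_n$ to vanish in every such column.
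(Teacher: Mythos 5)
Your proof is correct and follows essentially the same route as the paper's: Case 1 (both indices below $n$) is handled by the LRRF property of ${\mathcal{L}}^{(n-1)}$ via Proposition \ref{proposition LRRF_1}, and Case 2 (one index equal to $n$) by combining $g_{n\rho_n}\not=0$ with relation (\ref{Eq-JG7}) to derive a contradiction. No further comment is needed.
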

\begin{proof}
Case \ 1. Let $i,j \in J_n$ such that $i\not= j$ with $i<n$ and $j<n$. As ${\mathcal{L}}^{(n-1)}$ is in LRRF, it follows that $\rho_i\not=\rho_{j}$, whence the assertion. Case \ 2. Let $i=n$ and $j<i$. If $G_n=\textbf{0}$, then $n\not\in J_n$, whence the assertion follows. If $G_n\not=\textbf{0}$, then $n\in J_n$. Certainly $g_{n\rho_n}\not=0$, as being the right leading one of $G_n$. On the contrary we assume that $\rho_{j}=\rho_n$. As $0\le j\le n-1$, on account of (\ref{Eq-JG7}), the contradiction follows from $g_{n\rho_n}=g_{n\rho_{j}}=0$. 
\end{proof}
\subsection{Jordan Elimination}
\label{sec:JordanElimination}
The Jordan elimination comprises the second part of the $n$-th stage of the process. If $G_n\not=\textbf{0}$, $G_n$ is used as a pivot row to eliminate the entries of preceding rows. Taking into account that the preceding rows of $G_n$ are of the form $L^{(n-1)}_i$ for $0\leq i\leq n-1$, the reduction of $L^{(n-1)}_i$ to $L^{(n)}_i$ through  $G_n$, can be described by
\begin{equation} \label{eq-gj2}  
L^{(n)}_i=L^{(n-1)}_i+\lambda^{(n-1)}_iG_n,      
\end{equation}
where \vspace{0.03in}$\lambda^{(n-1)}_i=-l^{(n-1)}_{i\rho_n}$. By analogy to what was previously said, the effective reduction of the $i$-th row through $G_n$, that is $L^{(n)}_i\not=L^{(n-1)}_i$, occurs if and only if $G_n\not=\textbf{0}$ and $\lambda^{(n-1)}_i\not=0$. 
If $L^{(n-1)}_i\not=\textbf{0}$ for $0\le i \le n-1$, Proposition \ref{Gpropfth} implies that $\maxs (G_n)=\rho_n\not=\rho_i=\maxs (L^{(n-1)}_i)$.
Besides this, if $\maxs (G_n) > \maxs (L^{(n-1)}_i)$, then $\lambda^{(n-1)}_i=l^{(n-1)}_{i\rho_n}=0$ and hence (\ref{eq-gj2}) implies that $L^{(n)}_i=L^{(n-1)}_i$. From the last two statements we deduce that, if $L^{(n)}_i\not=L^{(n-1)}_i$, then
\begin{equation} \label{eq-gj42}
\maxs (G_n) < \maxs (L^{(n-1)}_i)
\end{equation}
for $0\le i \le n-1$. We infer from (\ref{eq-gj2}) and (\ref{eq-gj42}) that in all cases ($L^{(n)}_i\not=L^{(n-1)}_i$ or $L^{(n)}_i=L^{(n-1)}_i$)
\begin{equation} \label{eq-gj5}
 \maxs (L^{(n-1)}_i)= \maxs (L^{(n)}_i)
 \end{equation}
for all $n=1,2,...$ and all $i: 0\le i \le n-1$. Thus $\maxs (L^{(i)}_i)= \maxs (L^{(i+1)}_i)=...$ and so 
\begin{equation} \label{Eq-gj51}
\maxs (G_i)= \maxs (L^{(n)}_i)  \   \  \ \forall \ n>i.
\end{equation}
In view of Example \ref{example1}, $U_n$ in (\ref{eq-ex1.matrU}) shows that the analog of (\ref{Eq-gj51}) does not hold, if the algorithm is implemented with LPS. We conclude that only the Gaussian elimination can generate new zero rows and new row lengths. The latter along with the fact that the Gaussian elimination acts only once on a nonzero row entails that $\rho_i=\maxs (L^{(n)}_i)$ for all $n$ with $n\ge i$ and so the shorthand notation for $\rho_i=\maxs(G_i)$ employed in Subsection \ref{sec:GaussianElimination} can also be used for all $n>i$. Furthermore, since the Jordan elimination does not affect right leading ones of preceding rows, in each stage the normalization process needs just one scaling operation given by (\ref{leading}). By (\ref{eq-gj42}), (\ref{eq-gj5}) and (\ref{Eq-gj51}), respectively, we deduce that if $L^{(n)}_i\not=L^{(n-1)}_i$, then $\maxs (G_n) < \maxs (L^{(n-1)}_i)=\maxs (L^{(n)}_i)=\maxs (G_i)$ for $i\le n-1$. Thus
\begin{equation} \label{Eq-gj52}
\maxs (G_n)< \maxs (G_i) \ \ \ (i\le n-1),
\end{equation}
whenever $G_n$ had effectively eliminated an element of the $i$-th row. 
\begin{corollary} \label{proposition zero-rows}
For every  $n>i$
\begin{equation} \label{Eq-gj53}
                                 G_i=\mbox{\boldmath $\mathrm{0}$}\Longleftrightarrow L^{(n)}_i= \mbox{\boldmath $\mathrm{0}$}.
\end{equation}
\end{corollary}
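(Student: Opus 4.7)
The plan is to prove the two directions separately, with the backward direction being essentially a one-line consequence of the row-length invariance established in the preceding subsection.

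For the forward implication ($\Rightarrow$), I would argue by induction on $n \geq i$. The base case $n = i$ holds trivially from the convention $G_i = L^{(i)}_i$. For the inductive step, assume $L^{(n-1)}_i = \mathbf{0}$. The only way $L^{(n)}_i$ can differ from $L^{(n-1)}_i$ is through an effective Jordan reduction via (\ref{eq-gj2}), which requires $\lambda^{(n-1)}_i = -l^{(n-1)}_{i\rho_n} \neq 0$. But if $L^{(n-1)}_i = \mathbf{0}$, every entry $l^{(n-1)}_{i k}$ vanishes; in particular $l^{(n-1)}_{i\rho_n} = 0$, forcing $\lambda^{(n-1)}_i = 0$ and hence $L^{(n)}_i = L^{(n-1)}_i = \mathbf{0}$. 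This closes the induction.

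For the backward implication ($\Leftarrow$), I would prove the contrapositive: if $G_i \neq \mathbf{0}$, then $L^{(n)}_i \neq \mathbf{0}$ for every $n > i$. Suppose $G_i \neq \mathbf{0}$; then by the convention $\maxs(\mathbf{0}) = -1$ we have $\maxs(G_i) = \rho_i \geq 0$. Identity (\ref{Eq-gj51}) asserts precisely that $\maxs(G_i) = \maxs(L^{(n)}_i)$ for every $n > i$, so $\maxs(L^{(n)}_i) \geq 0$ and consequently $L^{(n)}_i \neq \mathbf{0}$, as required.

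The main (and only mild) obstacle is simply to be careful that equation (\ref{Eq-gj51}) indeed holds unconditionally for all $n > i$, and not just when successive Jordan reductions are effective; this is already warranted by (\ref{eq-gj5}), which holds in both the effective and non-effective cases, iterated from stage $i$ up to stage $n$. Apart from this bookkeeping, both directions are immediate consequences of (\ref{eq-gj2}) and (\ref{Eq-gj51}), and no additional machinery is needed.
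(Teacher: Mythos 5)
Your proof is correct and follows essentially the same route as the paper: the forward direction is the paper's observation that the algorithm leaves zero rows unchanged (which you merely formalize as an induction on $n$ using (\ref{eq-gj2})), and the backward direction is the paper's direct appeal to (\ref{Eq-gj51}), stated by you in contrapositive form. No gaps.
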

\begin{proof} Since $n>i$, the direct implication follows from the fact that algorithm leaves zero rows unchanged. The converse follows directly from (\ref{Eq-gj51}).
\end{proof}
By virtue of (\ref{Eq-ineq}), (\ref{Eq-gj51}) implies $\maxs (L^{(n)}_i)\leq \maxs (C_i)$ for all $i\in \omega$ and $n\in \omega$. 
\begin{proposition} \label{proposition G_i}
If $G_i\not=\mbox{\boldmath $\mathrm{0}$}$ and $G_{j}\not=\mbox{\boldmath $\mathrm{0}$}$ for any $i,j$ in $\omega$ with $i\not= j$, then 
\begin{equation} \label{Eq-gjG_i}
\maxs (G_i)\not= \maxs (G_{j}).
\end{equation}
Equivalently the map $J\ni i\mapsto \maxs (G_i)\in \omega$ is injective.
\end{proposition}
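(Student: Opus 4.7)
The plan is to derive the statement directly from Proposition \ref{Gpropfth} (injectivity of row-lengths in each augmented matrix $({\mathcal{L}}^{(n-1)}:G_n)$), combined with the stability relation (\ref{Eq-gj51}) and Corollary \ref{proposition zero-rows}. Without loss of generality I assume $i<j$ and show $\maxs(G_i)\neq\maxs(G_j)$.

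First, I would promote $L^{(j-1)}_i$ into the picture. Since $G_i\neq\mbox{\boldmath $\mathrm{0}$}$, Corollary \ref{proposition zero-rows} gives $L^{(j-1)}_i\neq\mbox{\boldmath $\mathrm{0}$}$, so the index $i$ belongs to the set $J_j$ of nonzero-row indices of the augmented matrix $({\mathcal{L}}^{(j-1)}:G_j)$. Moreover, relation (\ref{Eq-gj51}), applied with $n=j-1>i$ or through the chain $\maxs(G_i)=\maxs(L^{(i)}_i)=\maxs(L^{(i+1)}_i)=\cdots=\maxs(L^{(j-1)}_i)$, gives the crucial identification
\[
\maxs(G_i)=\maxs(L^{(j-1)}_i)=\rho_i,
\]
so the row-length of $G_i$ is faithfully recorded as the length of row $i$ inside ${\mathcal{L}}^{(j-1)}$.

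Second, since $G_j\neq\mbox{\boldmath $\mathrm{0}$}$ by hypothesis, the index $j$ also lies in $J_j$. Proposition \ref{Gpropfth} applied at stage $n=j$ asserts that the row-length map $\rho:J_j\to\omega$ of $({\mathcal{L}}^{(j-1)}:G_j)$ is injective. Since $i\neq j$ and both belong to $J_j$, we conclude
\[
\maxs(G_i)=\rho_i\neq\rho_j=\maxs(G_j),
\]
which is exactly (\ref{Eq-gjG_i}). The equivalent reformulation, namely that $J\ni i\mapsto\maxs(G_i)\in\omega$ is injective on the indices of nonzero $G_i$'s, is just the contrapositive.

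I do not foresee a serious obstacle here: the argument is a two-line bookkeeping exercise once the preparatory results are invoked in the correct order. The only point worth care is the invocation of Corollary \ref{proposition zero-rows} to guarantee that $L^{(j-1)}_i$ is actually nonzero and therefore indexed by $J_j$, so that the injectivity statement of Proposition \ref{Gpropfth} can be brought to bear on the pair $(i,j)$; without this step, one might naively worry that a previously nonzero $G_i$ could be zeroed out at a later stage, but (\ref{Eq-gj51}) and Corollary \ref{proposition zero-rows} explicitly forbid that.
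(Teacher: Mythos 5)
Your proof is correct, and it takes a slightly different route from the paper's. The paper picks an arbitrary $n>\max\{i,j\}$, invokes Corollary \ref{proposition zero-rows} to see that $L^{(n)}_i$ and $L^{(n)}_j$ are both nonzero rows of ${\mathcal{L}}^{(n)}$, uses the fact that ${\mathcal{L}}^{(n)}$ (the post-Jordan matrix) is in LRRF --- so its row-length map is injective by Proposition \ref{proposition LRRF_1} --- and then transports the conclusion back via (\ref{Eq-gj51}). You instead stop at stage $j$ \emph{before} the Jordan elimination and take Proposition \ref{Gpropfth}, the injectivity of the row-length map of the augmented matrix $({\mathcal{L}}^{(j-1)}:G_j)$, as the key input. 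Both arguments rest on the same two stability facts, namely (\ref{Eq-gj51}) and Corollary \ref{proposition zero-rows}. Yours has the modest advantage of not relying on the assertion that ${\mathcal{L}}^{(n)}$ is in LRRF (which the paper establishes only in the running description of the algorithm), using instead the formally proved and weaker Proposition \ref{Gpropfth}, and of needing (\ref{Eq-gj51}) only up to stage $j-1$; the paper's version is symmetric in $i$ and $j$ and sidesteps the boundary case $j-1=i$, which you correctly dispose of via the convention $G_i=L^{(i)}_i$.
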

\begin{proof} Let $n>\max\{i,j\}$. Corollary \ref{proposition zero-rows} implies: $L^{(n)}_i\not=\textbf{0}$ and $L^{(n)}_{j}\not=\textbf{0}$. As $L^{(n)}_i, L^{(n)}_{j}$ are rows of the matrix ${\mathcal{L}}^{(n)}$, which is in LRRF, we conclude that $\maxs (L^{(n)}_i)\not= \maxs (L^{(n)}_{j})$. Hence, the assertion follows from (\ref{Eq-gj51}).
\end{proof}
\subsection{The Main Theorem and Some Consequences}
\label{sec:TheMainTheoremAndSomeConsequences}
Theorem \ref{main-theorem} below shows that an arbitrary row of the form $L^{(n)}_{k}$ remains invariant after a sufficient large $n\in \omega$. It demonstrates the main difference between LPS and RPS in the implementation of the infinite Gauss-Jordan algorithm.  
\begin{theorem} \label{main-theorem}
For every $k\in \omega$ there exists ${\mbox{\scriptsize $\mathrm{M}$}}_{k}\in \omega$ with ${\mbox{\scriptsize $\mathrm{M}$}}_{k}\ge k$ such that
\begin{equation} \label{eq-main} \forall n: n > {\mbox{\scriptsize $\mathrm{M}$}}_{k} \Rightarrow  L^{(n)}_{k}=L^{({\mbox{\tiny $\mathrm{M}$}}_{k})}_{k}
\end{equation} 
\end{theorem}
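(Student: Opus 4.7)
\emph{Proof plan.} The strategy is to exploit the fact that, once stage $k$ of the algorithm has terminated, only Jordan elimination can modify row $k$, and under RPS the rightmost-pivot constraint severely restricts which later stages can be ``active'' on this row. By Proposition \ref{proposition G_i} the rightmost indices of the nonzero $G_n$ are pairwise distinct, so the active stages below $\rho_k$ can only occupy finitely many positions.

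I would first dispose of the degenerate case $G_k = \mbox{\boldmath $\mathrm{0}$}$: Corollary \ref{proposition zero-rows} forces $L^{(n)}_k = \mbox{\boldmath $\mathrm{0}$}$ for every $n \geq k$, so $\mathrm{M}_k = k$ works. Assume henceforth $G_k \neq \mbox{\boldmath $\mathrm{0}$}$ and write $\rho_k = \maxs(G_k)$. For $n > k$, the Gaussian elimination at stage $n$ only reduces the newly appended row of index $n$ and leaves $L^{(n-1)}_k$ untouched, so the only candidate modification of row $k$ at stage $n$ is the Jordan step (\ref{eq-gj2}), namely $L^{(n)}_k = L^{(n-1)}_k + \lambda^{(n-1)}_k G_n$ with $\lambda^{(n-1)}_k = -l^{(n-1)}_{k\rho_n}$. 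Call stage $n$ \emph{active on $k$} when $G_n \neq \mbox{\boldmath $\mathrm{0}$}$ and $\lambda^{(n-1)}_k \neq 0$, equivalently when $L^{(n)}_k \neq L^{(n-1)}_k$.

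The next step combines (\ref{eq-gj42}) and (\ref{Eq-gj51}): an active stage forces $\rho_n < \maxs(L^{(n-1)}_k)$, and by (\ref{Eq-gj51}) the latter equals $\maxs(G_k) = \rho_k$, so every active stage satisfies $\rho_n < \rho_k$. Proposition \ref{proposition G_i} makes $n \mapsto \rho_n$ injective on those $n$ for which $G_n \neq \mbox{\boldmath $\mathrm{0}$}$, so the set of active stages injects into $\{0, 1, \ldots, \rho_k - 1\}$ and is therefore finite (bounded in cardinality by $\rho_k$). Setting $\mathrm{M}_k$ to be the maximum of this set, or $k$ if the set is empty, gives $\mathrm{M}_k \geq k$; for each $n > \mathrm{M}_k$ the stage is inactive on $k$, hence $L^{(n)}_k = L^{(n-1)}_k$, and iterating yields $L^{(n)}_k = L^{(\mathrm{M}_k)}_k$.

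I expect the main conceptual point to lie not in the deduction itself but in recognizing that the distinctness of rightmost pivot indices (Proposition \ref{proposition G_i}) is precisely the feature that blocks the cascading phenomenon observed under LPS in Example \ref{example1}: once this is in hand, finiteness is a pigeonhole statement in $\{0, 1, \ldots, \rho_k - 1\}$, and everything else is bookkeeping with the inequalities already tabulated in Subsection \ref{sec:JordanElimination}.
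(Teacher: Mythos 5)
Your argument is correct and rests on exactly the same two facts as the paper's own proof --- the inequality (\ref{Eq-gj52}) forcing $\maxs(G_n)<\maxs(G_k)$ at every stage that effectively modifies row $k$, and the injectivity of $n\mapsto\maxs(G_n)$ from Proposition \ref{proposition G_i} --- so it is essentially the same proof. The only difference is presentational: you argue directly that the set of active stages injects into the finite set $\{0,1,\dots,\rho_k-1\}$ and take its maximum, whereas the paper assumes infinitely many effective reductions and reaches the same pigeonhole contradiction phrased in terms of ordinals of initial segments.
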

\begin{proof} 
On the contrary we assume the existence of some $k\in \omega$ such that for every ${\mbox{\scriptsize $\mathrm{M}$}}\ge k $
\begin{equation} \label{eq-contradiction}
  \exists n_{\mbox{\tiny $\mathrm{M}$}}: n_{\mbox{\tiny $\mathrm{M}$}}> \mbox{\scriptsize $\mathrm{M}$} \Rightarrow  L^{(n_{\mbox{\tiny $\mathrm{M}$}})}_{k} \not=L^{(\mbox{\tiny $\mathrm{M}$})}_{k}. 
\end{equation} 
With the aid of (\ref{eq-contradiction}) a sequence of natural numbers is constructed as follows: Applying (\ref{eq-contradiction}) with ${\mbox{\scriptsize $\mathrm{M}$}}=k+1$, then 
\[ \exists n_{k+1}: n_{k+1}>k+1 \Rightarrow  L^{(n_{k+1})}_{k} \not=L^{(k+1)}_{k}.     \]
Let ${\mbox{\scriptsize $\mathrm{M}$}}_{1}={\rm min}\{n_{k+1}>k+1 : L^{(n_{k+1})}_{k} \not=L^{(k+1)}_{k}\}$. Then applying (\ref{eq-contradiction}) with ${\mbox{\scriptsize $\mathrm{M}$}}= {\mbox{\scriptsize $\mathrm{M}$}}_{1}$ we have
\[ \exists n_{{\mbox{\tiny $\mathrm{M}$}}_{1}}:  n_{{\mbox{\tiny $\mathrm{M}$}}_{1}}> {\mbox{\scriptsize $\mathrm{M}$}}_{1} \Rightarrow \ L^{(n_{{\mbox{\tiny $\mathrm{M}$}}_{1}})}_{k}\not=L^{({\mbox{\tiny $\mathrm{M}$}}_{1})}_{k}  \]
and we define ${\mbox{\scriptsize $\mathrm{M}$}}_{2}={\rm min}\{n_{{\mbox{\tiny $\mathrm{M}$}}_{1}}> {\mbox{\scriptsize $\mathrm{M}$}}_{1}: L^{(n_{{\mbox{\tiny $\mathrm{M}$}}_{1}})}_{k}\not=L^{({\mbox{\tiny $\mathrm{M}$}}_{1})}_{k} \}$ and the process continues ad infinitum. Let ${\mbox{\scriptsize $\mathrm{M}$}}_0=k+1$. Evidently ${\mbox{\scriptsize $\mathrm{M}$}}_0<{\mbox{\scriptsize $\mathrm{M}$}}_1<{\mbox{\scriptsize $\mathrm{M}$}}_2,...$. For each ${\mbox{\scriptsize $\mathrm{M}$}}_i={\rm min}\{n_{{\mbox{\tiny $\mathrm{M}$}}_{i-1}}>{\mbox{\tiny $\mathrm{M}$}}_{i-1}: L^{(n_{{\mbox{\tiny $\mathrm{M}$}}_{i-1}})}_{k}\not=L^{({\mbox{\tiny $\mathrm{M}$}}_{i-1})}_{k} \}$ the effective reduction of $L^{({\mbox{\tiny $\mathrm{M}$}}_{i-1})}_{k}$ to $L^{({\mbox{\tiny $\mathrm{M}$}}_i)}_{k}$ through the pivot $G_{{\mbox{\tiny $\mathrm{M}$}}_i}$ can be described by 
\[L^{({\mbox{\tiny $\mathrm{M}$}}_i)}_{k}=L^{({\mbox{\tiny $\mathrm{M}$}}_{i-1})}_{k}+\lambda^{({\mbox{\tiny $\mathrm{M}$}}_{i-1})}_{k} G_{{\mbox{\tiny $\mathrm{M}$}}_i}\]
Certainly $L^{({\mbox{\tiny $\mathrm{M}$}}_i)}_{k}\not=L^{({\mbox{\tiny $\mathrm{M}$}}_{i-1})}_{k}$ and ${\mbox{\scriptsize $\mathrm{M}$}}_{i-1}<{\mbox{\scriptsize $\mathrm{M}$}}_i$. It turns out that there exists an infinite sequence $G_{{\mbox{\tiny $\mathrm{M}$}}_{1}}, G_{{\mbox{\tiny $\mathrm{M}$}}_{2}}, ...$ of successive pivot rows, which effectively eliminate entries of the $k$-th row. Let us now consider the corresponding sequence of natural numbers $\maxs (G_{{\mbox{\tiny $\mathrm{M}$}}_{1}}),\maxs (G_{{\mbox{\tiny $\mathrm{M}$}}_{2}}), ...$. Proposition \ref{proposition G_i} implies that $\maxs (G_{{\mbox{\tiny $\mathrm{M}$}}_i})\not=\maxs (G_{{\mbox{\tiny $\mathrm{M}$}}_{j}})$ for any $i,j$ in $\omega$ with $i\not= j$. Thus the sequence $(\maxs (G_{{\mbox{\tiny $\mathrm{M}$}}_i}))_{i\in \omega}$ determines an infinite subset of $\omega$, say ${\mathcal{M}}=\{\maxs (G_{{\mbox{\tiny $\mathrm{M}$}}_{1}}),\maxs (G_{{\mbox{\tiny $\mathrm{M}$}}_{2}}), ...\}$. Clearly the ordinal number of $\mathcal{M}$ is $\omega$, and we shall write for this ${\rm ord}({\mathcal{M}})=\omega$. We deduce from (\ref{Eq-gj52}) that 
\begin{equation} \label{Eq-gj520}
\maxs (G_{{\mbox{\tiny $\mathrm{M}$}}_i})<\maxs (G_{k})
\end{equation}
for all $i\in \omega$. Let us call $I(\rho_k)$ the initial segment of $\omega$ determined by the natural number $\rho_k=\maxs (G_{k})$. The definition of ordinals implies ${\rm ord}(I(\rho_{k}))=\rho_{k}$. We infer from (\ref{Eq-gj520}) that $\maxs (G_{{\mbox{\tiny $\mathrm{M}$}}_i})\in I(\rho_{k})$ for all $i\in \omega$, whence ${\mathcal{M}}\subset I(\rho_{k})$. However, the latter is a contradictory statement, inasmuch as 
$\omega={\rm ord}({\mathcal{M}})\leqq {\rm ord}(I(\rho_{k}))=\rho_{k}\lvertneqq\omega$.
\end{proof}
By virtue of Theorem \ref{main-theorem}, for any row of index $k\in \omega$, there exists $\mbox{\scriptsize $\mathrm{M}$}_k$ with $\mbox{\scriptsize $\mathrm{M}$}_k\ge k$ such that the row $L^{(n)}_{k}$ remains fixed for all $n\ge \mbox{\scriptsize $\mathrm{M}$}_k$. We thus can define $L_{k}=L^{({\mbox{\tiny $\mathrm{M}$}}_k)}_k$ for the above $\mbox{\scriptsize $\mathrm{M}$}_{k}\in \omega$. Accordingly the infinite Gauss-Jordan algorithm implemented with RPS results in a row-finite $\omega\times\omega$ matrix $L:=(L_k)_{k\in \omega}$. 
Certainly each row $L_k$ of $L$ is constructed after a finite number of algorithmic steps. Thus $L_k$ is ultimately a finite linear combination of $C_i$. If $\mbox{\scriptsize $\mathrm{M}$}_k$ is defined to be the smallest integer such that ${\mbox{\scriptsize $\mathrm{M}$}}_{k}\ge k$ and $L^{(n)}_{k}=L^{({\mbox{\tiny $\mathrm{M}$}}_{k})}_{k}$ for all $n\ge {\mbox{\scriptsize $\mathrm{M}$}}_{k}$, then
\begin{equation} \label{main2}
L_{k}=L^{({\mbox{\tiny $\mathrm{M}$}}_k)}_k=\sum^{\mbox{\tiny $\mathrm{M}$}_k}_{i=0} \lambda_{ki}C_{i}.
\end{equation}
The subsequent corollary follows immediately:
\begin{corollary} \label{cor-GJ1}
For every $k\in \omega$, $L_{k}$ is a finite linear combination of $C_i$. Equivalently $\RS(L)\subseteq \RS(C)$.
\end{corollary}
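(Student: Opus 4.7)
The plan is to extract formula (\ref{main2}) from a single induction on the algorithmic stage, at which point both assertions follow at once. The induction hypothesis I would carry is: for every $k \in \omega$ and every $n \in \omega$, the row $L^{(n)}_k$ is a finite linear combination of $C_0, C_1, \ldots, C_{\max(k,n)}$. The base case is trivial from $L^{(-1)}_k = C_k$.

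For the inductive step I would simply invoke the explicit update formulas established earlier: the Gaussian elimination relation (\ref{Eq-GJ2}), together with the scaling (\ref{leading}), writes $G_n = L^{(n)}_n$ as a finite linear combination of $C_n$ and of the rows $L^{(n-1)}_0, \ldots, L^{(n-1)}_{n-1}$, while the Jordan elimination relation (\ref{eq-gj2}) updates, for $k < n$, according to $L^{(n)}_k = L^{(n-1)}_k + \lambda^{(n-1)}_k G_n$. In both cases, substituting the inductive hypothesis for each $L^{(n-1)}_i$ and for $G_n$ gives $L^{(n)}_k$ as a finite linear combination of $C_0, C_1, \ldots, C_n$, and the induction closes.

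Theorem~\ref{main-theorem} now provides, for each $k$, an integer $\mbox{\scriptsize $\mathrm{M}$}_k \geq k$ with $L_k = L^{(\mbox{\tiny $\mathrm{M}$}_k)}_k$. Applied at stage $\mbox{\scriptsize $\mathrm{M}$}_k$, the induction above delivers (\ref{main2}) and hence the first assertion. The second is then purely formal: an arbitrary element of $\RS(L) = \Span(L_k)_{k \in \omega}$ is a finite linear combination of rows of $L$, each of which lies in $\RS(C)$ by the first assertion, so the element itself lies in $\RS(C)$, as claimed.

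There is no real obstacle here---Theorem~\ref{main-theorem} has already done the substantive work of guaranteeing that the sequence of row operations affecting any fixed $L_k$ terminates. The only care needed is the indexing bookkeeping in the induction, to verify that stage $n$ can affect $L^{(n)}_k$ only through combinations involving $C_0, \ldots, C_n$, thereby preserving the finiteness of the linear combination at every stage.
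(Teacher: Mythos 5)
Your proposal is correct and follows essentially the same route as the paper: the paper likewise derives equation (\ref{main2}) from Theorem \ref{main-theorem} together with the observation that each $L_k$ is built in finitely many algorithmic steps, each step being a finite linear combination of rows already expressed in terms of the $C_i$, and then notes that the corollary follows immediately. You merely make explicit, via the induction on the stage index using (\ref{Eq-GJ2}), (\ref{leading}) and (\ref{eq-gj2}), what the paper asserts with the word ``certainly.''
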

\begin{corollary} \label{cor-GJ3} The following statements hold:
\emph{i)} $\Right(G)=\Right(L)=\Right(\RS(G))=\Right(\RS(L))$. \emph{ii)} $G_i=\mbox{\boldmath $\mathrm{0}$}$ if and only if $L_i=\mbox{\boldmath $\mathrm{0}$}$.
\end{corollary}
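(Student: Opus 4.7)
\proof (Proposal). The plan is to first establish part (ii), and then use it together with the invariance of row-length from Subsection~\ref{sec:JordanElimination} to deduce all four equalities in part (i).

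For part (ii), I would invoke Corollary \ref{proposition zero-rows} directly. Recall $L_{i}=L^{(\mbox{\tiny $\mathrm{M}$}_{i})}_{i}$ for some $\mbox{\scriptsize $\mathrm{M}$}_{i}\geq i$. If $\mbox{\scriptsize $\mathrm{M}$}_{i}=i$ then $L_{i}=L^{(i)}_{i}=G_{i}$ by the convention $G_{i}=L^{(i)}_{i}$, so the equivalence is trivial; if $\mbox{\scriptsize $\mathrm{M}$}_{i}>i$, then Corollary \ref{proposition zero-rows} applied with $n=\mbox{\scriptsize $\mathrm{M}$}_{i}$ yields $G_{i}=\mbox{\boldmath $\mathrm{0}$}\Leftrightarrow L^{(\mbox{\tiny $\mathrm{M}$}_{i})}_{i}=\mbox{\boldmath $\mathrm{0}$}\Leftrightarrow L_{i}=\mbox{\boldmath $\mathrm{0}$}$.

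For part (i), the crucial step is to show that $\maxs(G_{i})=\maxs(L_{i})$ whenever these rows are nonzero. In the case $\mbox{\scriptsize $\mathrm{M}$}_{i}=i$ the identification is immediate since $L_{i}=G_{i}$; in the case $\mbox{\scriptsize $\mathrm{M}$}_{i}>i$, relation (\ref{Eq-gj51}) gives $\maxs(G_{i})=\maxs(L^{(\mbox{\tiny $\mathrm{M}$}_{i})}_{i})=\maxs(L_{i})$. Combined with part (ii), which ensures that $G_{i}\neq\mbox{\boldmath $\mathrm{0}$}$ iff $L_{i}\neq\mbox{\boldmath $\mathrm{0}$}$, this gives the set equality $\Right(G)=\Right(L)$.

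It remains to tie these sets to the row spaces. Proposition \ref{proposition G_i} states that the map $J\ni i\mapsto \maxs(G_{i})$ is injective, and by the previous step the same map coincides with $i\mapsto \maxs(L_{i})$ on the (common) index set of nonzero rows. Hence the nonzero rows of both $G$ and $L$ are indexed families with injective rightmost-index sequence, and Corollary \ref{corollary_1.4}(i) (or equivalently Theorem \ref{theorem complete-basis}) then furnishes complete bases of rightmost index of $\RS(G)$ and $\RS(L)$ respectively. Consequently $\Right(\RS(G))=\Right(G)$ and $\Right(\RS(L))=\Right(L)$, yielding the fourfold equality. I do not anticipate a genuine obstacle here; the only minor subtlety is the trivial case $\mbox{\scriptsize $\mathrm{M}$}_{i}=i$, where (\ref{Eq-gj51}) is vacuous and one must fall back on the convention $G_{i}=L^{(i)}_{i}$ before invoking Corollary \ref{proposition zero-rows}.
\qed
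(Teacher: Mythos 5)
Your proposal is correct and follows essentially the same route as the paper: part (ii) from Corollary \ref{proposition zero-rows} (i.e.\ (\ref{Eq-gj53})), and part (i) by combining (\ref{Eq-gj51}) with Theorem \ref{main-theorem} to get $\maxs(G_i)=\maxs(L_i)$, then Proposition \ref{proposition G_i} and Corollary \ref{corollary_1.4} for the row-space equalities. Your explicit handling of the boundary case $\mbox{\scriptsize $\mathrm{M}$}_i=i$ via the convention $G_i=L^{(i)}_i$ is a harmless refinement of the paper's argument.
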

\begin{proof}
i) For an arbitrary $i\in \omega$, it follows from (\ref{Eq-gj51}) that $\maxs (G_i)=\maxs (L^{(n)}_i)$ for all $n\ge i$. Applying Theorem \ref{main-theorem} with  $n\ge {\mbox{\scriptsize $\mathrm{M}$}}_i\ge i$, we conclude that $\maxs (L^{(n)}_i)=\maxs (L_i)$.  Thus $\maxs (G_i)=\maxs (L_i)$ for all $i\in \omega$ and the first equality follows. As the maps of row-length of the nonzero rows of $G$ and $L$ coincide, it follows from Proposition \ref{proposition G_i} that they are injective.
Hence the last two equalities follow from Corollary \ref{corollary_1.4}.
ii) It follows from (\ref{Eq-gj53}).
\end{proof}
\begin{theorem} 
The row-finite matrix $L$ is in \emph{LRRF}.
\end{theorem}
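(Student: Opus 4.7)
The plan is to verify that the limiting matrix $L=(L_k)_{k\in\omega}$ satisfies both clauses of Definition \ref{definition-LRRF}. The nonzero rows of $L$ are exactly those indexed by $\{k : G_k\ne\mathbf{0}\}$ thanks to Corollary \ref{cor-GJ3}(ii), and for such $k$ one has $\ell(L_k)=\ell(G_k)=\rho_k$ by Corollary \ref{cor-GJ3}(i). By Proposition \ref{proposition G_i} the map $k\mapsto \rho_k$ is injective on nonzero-row indices, so ``the column $\rho_k$'' is unambiguous. Fixing such a $k$, I would argue that $L_{k,\rho_k}=1$ and $L_{i,\rho_k}=0$ for every $i\ne k$, which is the whole content of Definition \ref{definition-LRRF}.

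The first step is to extract what is already in place at the end of stage $k$. Since every finite matrix $\mathcal{L}^{(k)}$ produced by the algorithm is in LRRF by construction (see the paragraph containing (\ref{eq-fmat})), the column $\rho_k$ of $\mathcal{L}^{(k)}$ already has the value $1$ in row $k$ (from the scaling in (\ref{leading})) and the value $0$ in every row $i<k$ (this is condition (ii) of Definition \ref{definition-LRRF} applied to $\mathcal{L}^{(k)}$). For indices $i>k$, the relevant entry only appears when the row of index $i$ first enters the process: at stage $i$ the Gaussian part produces $G_i$, and (\ref{Eq-JG7}) applied with $j=k\le i-1$ gives $g_{i\rho_k}=0$, so $L^{(i)}_{i,\rho_k}=0$ as well. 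Thus at stage $\max(k,i)$ the entry in column $\rho_k$, row $i$, is already what Definition \ref{definition-LRRF} demands.

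The main step, and the only genuinely infinite part of the argument, is to show that no subsequent Jordan elimination ever disturbs column $\rho_k$. Here I would invoke (\ref{Eq-JG7}) again, but now applied to a later pivot: for every $n>k$ one has $k\le n-1$, so $g_{n\rho_k}=0$. Consequently the Jordan update $L^{(n)}_i=L^{(n-1)}_i+\lambda^{(n-1)}_i G_n$, written componentwise at column $\rho_k$, reads $l^{(n)}_{i,\rho_k}=l^{(n-1)}_{i,\rho_k}+\lambda^{(n-1)}_i\cdot 0=l^{(n-1)}_{i,\rho_k}$ for every $i<n$. By induction on $n$, the entry in column $\rho_k$ of every row $i$ is frozen from stage $\max(k,i)$ onward. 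Passing to the stable value via Theorem \ref{main-theorem} and (\ref{main2}) gives $L_{k,\rho_k}=1$ and $L_{i,\rho_k}=0$ for all $i\ne k$, which is exactly Definition \ref{definition-LRRF}.

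The obstacle I expect to have to address most carefully is this ``column invariance'': one must convince the reader that the right leading position of $G_k$ is genuinely a pivot column for \emph{all} future stages, not merely for stage $k$. The identity $g_{n\rho_k}=0$ supplied by the Gaussian analysis of Subsection \ref{sec:GaussianElimination} is precisely the ingredient missing under LPS (recall the counterexample \ref{example1}, where the corresponding column kept shifting), and it is what makes Theorem \ref{main-theorem} usable here. Once this invariance is in hand, no further stability argument beyond Theorem \ref{main-theorem} is needed, and the LRRF conditions transfer from the finite-stage matrices $\mathcal{L}^{(n)}$ to $L$ pointwise.
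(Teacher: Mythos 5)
Your proof is correct, but it takes a noticeably different route from the paper's. The paper's argument is a short two-row reduction: given a nonzero row $L_n$ and any other row $L_k$, Theorem \ref{main-theorem} supplies a single stage $m>\max\{{\mbox{\scriptsize $\mathrm{M}$}}_k,{\mbox{\scriptsize $\mathrm{M}$}}_n\}$ at which both rows have already stabilized, so both are literally rows of the finite matrix ${\mathcal{L}}^{(m)}$, and the LRRF conditions for the pair $(L_k,L_n)$ are inherited verbatim from the fact that each ${\mathcal{L}}^{(m)}$ is in LRRF; no inspection of individual columns is needed. You instead re-derive the stabilization of the single column $\rho_k$ from the elimination formulas themselves, using (\ref{Eq-JG7}) to show $g_{n\rho_k}=0$ for all $n>k$ and hence that the Jordan update never touches that column after stage $\max(k,i)$. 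This is sound (the one hypothesis to keep visible is that (\ref{Eq-JG7}) is applied with $G_k\not=\mbox{\boldmath $\mathrm{0}$}$, so that $\rho_k$ is a genuine pivot position, and that stages with $C_n=\mbox{\boldmath $\mathrm{0}$}$ are vacuous), and it has the merit of isolating exactly the identity that fails under LPS, so it explains \emph{why} the theorem holds rather than only that it holds; but it duplicates work already packaged into Theorem \ref{main-theorem}, which you end up invoking anyway just to know that $L$ is well defined. In short: your column-invariance argument is a finer, more mechanical proof; the paper's is a coarser compactness-style transfer that gets the same conclusion in three lines once Theorem \ref{main-theorem} is available.
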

\begin{proof}
Let $L_n$ be an arbitrary nonzero row of $L$ and $\rho_n=\maxs (L_n)$. As $l_{n\rho_n}$ stands for the rightmost one of $L_n$, we are to show that $l_{k\rho_n}=0$ for all $k\in \omega$ such that $k\not=n$. In view of Theorem \ref{main-theorem}, if $m>\max\{{\mbox{\scriptsize $\mathrm{M}$}}_{k}, {\mbox{\scriptsize $\mathrm{M}$}}_n\}$, then $L_{k}=L^{(m)}_{k}$ and $L_n=L^{(m)}_n$. Thus $L_k,\; L_n$ are rows of ${\mathcal{L}}^{(m)}$ and $l_{k\rho_n}=l^{(m)}_{k\rho_n}$. Since ${\mathcal{L}}^{(m)}$ is in LRRF, it follows that $l^{(m)}_{k\rho_n}=0$, as claimed.
\end{proof} 
\subsection{The Row Equivalence of $C$ and $L$}
\label{sec:TheRowEquivalence}
In the sequel, the row equivalence of $C$ and $L$ is established. Let $f$ be the induced linear mapping by $C$. Thus $f(\mbox{\boldmath $\mathrm{e}$}_{n})=C_n$ for all $n\in \omega$ and $C=[f]^{\mbox{\scriptsize \boldmath $\mathrm{E}$}}_{\mbox{\scriptsize \boldmath $\mathrm{E}$}}$. Let $\mbox{\boldmath $\mathrm{L}$}=(\mbox{\boldmath $\mathrm{l}$}_{k})_{k\in \omega}$ be the sequence derived by applying the same sequence of row operations to the corresponding terms of $\mbox{\boldmath $\mathrm{E}$}$, which have occurred in the reduction of $C$ to $L$. In view of (\ref{main2}) $\mbox{\boldmath $\mathrm{l}$}_k$ is given by $\mbox{\boldmath $\mathrm{l}$}_{k}=\sum^{\mbox{\tiny $\mathrm{M}$}_k}_{i=0} \lambda_{ki}\mbox{\boldmath $\mathrm{e}$}_{i}$. Since \vspace{0.03in}$f(\mbox{\boldmath $\mathrm{l}$}_{k})=\sum^{\mbox{\tiny $\mathrm{M}$}_k}_{i=0} \lambda_{ki}f(\mbox{\boldmath $\mathrm{e}$}_{i})=\sum^{\mbox{\tiny $\mathrm{M}$}_k}_{i=0} \lambda_{ki}C_{i}=L_k$, we conclude that each nonzero row $L_{k}$, $k\in J$, represents the element $f(\mbox{\boldmath $\mathrm{l}$}_{k})$, $k\in J$, relative to $\mbox{\boldmath $\mathrm{E}$}$. Let also $G=(G_{k})_{k\in \omega}$ be the matrix whose rows are derived by the Gaussian elimination, as defined by (\ref{leading}) and (\ref{Eq-GJ2}). Corollary \ref{cor-GJ3} entails that $G, L$ are associated with the same indexing sets of zero and nonzero rows designated by $W, J$, respectively and that $G, L$ are associated with identical sequences of row-length. In analogy to $\mbox{\boldmath $\mathrm{L}$}$, the sequence $\mbox{\boldmath $\mathrm{G}$}=(\mbox{\boldmath $\mathrm{g}$}_{k})_{k\in \omega}$, is defined by applying the same sequence of row operations to $\mbox{\boldmath $\mathrm{E}$}$, which have occurred in the reduction of $C$ to $G$. Certainly $f(\mbox{\boldmath $\mathrm{g}$}_{n})=G_{n}$, $n\in \omega$.
\begin{proposition} \label{prop4}
\emph{i)} The set $\mbox{\boldmath $\mathrm{G}$}$ is a basis of ${\mathcal{F}}^{(\omega)}$.
\emph{ii)} $G\sim C$.
\emph{iii)} $\RS(G)= \RS(C)$.
\emph{iv)} The set $\{G_{j}\}_{j\in J}$ of nonzero rows of $G$ is a complete basis of  row-length of $\RS(C)$.
\emph{v)} The set $\{\mbox{\boldmath $\mathrm{g}$}_{w}\}_{ w\in W}$ is a basis of $\Ker(f)$.
\end{proposition}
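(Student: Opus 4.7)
The plan is to establish (i) first by identifying the lower-triangular structure of $\mbox{\boldmath $\mathrm{G}$}$, after which (ii)--(v) follow from results already in the paper.

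For (i), I would argue by induction on $n$ that $\mbox{\boldmath $\mathrm{g}$}_n$ has the form $\mbox{\boldmath $\mathrm{g}$}_n = \lambda_{nn}\mbox{\boldmath $\mathrm{e}$}_n + \sum_{i<n}\lambda_{ni}\mbox{\boldmath $\mathrm{e}$}_i$ with $\lambda_{nn}\ne 0$. Tracking the algorithm on $\textbf{I}$, the inductive hypothesis gives that at the start of stage $n$ every tracker row of index $i<n$ lies in $\Span(\mbox{\boldmath $\mathrm{e}$}_0,\ldots,\mbox{\boldmath $\mathrm{e}$}_{n-1})$, since Jordan eliminations from stages $j\le n-1$ only add multiples of $\mbox{\boldmath $\mathrm{g}$}_j\in \Span(\mbox{\boldmath $\mathrm{e}$}_0,\ldots,\mbox{\boldmath $\mathrm{e}$}_j)$. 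The Gaussian step at stage $n$ subtracts multiples of these earlier tracker rows from $\mbox{\boldmath $\mathrm{e}$}_n$, and either the subsequent scaling multiplies by the nonzero factor $1/l^{(n-1)}_{n\rho_n}$ or no scaling occurs and the coefficient of $\mbox{\boldmath $\mathrm{e}$}_n$ remains $1$. In every regime $\lambda_{nn}\ne 0$, so Proposition \ref{proposition_1.2} (with the convention adopted just afterwards) identifies $\mbox{\boldmath $\mathrm{G}$}$ as a Hamel basis of ${\mathcal{F}}^{(\omega)}$.

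Statements (ii) and (iii) then follow quickly. Let $Q$ be the row-finite matrix whose $k$-th row is $\mbox{\boldmath $\mathrm{g}$}_k$; by (i) and the remark after Proposition \ref{proposition_1.2}, $Q$ is non-singular. Since $f(\mbox{\boldmath $\mathrm{g}$}_k)=G_k$ and the $k$-th row of $Q\cdot C$ equals $\mbox{\boldmath $\mathrm{g}$}_k\cdot C=f(\mbox{\boldmath $\mathrm{g}$}_k)$, we obtain $Q\cdot C=G$, whence $G\sim C$; Proposition \ref{proposition_3.1} then yields $\RS(G)=\RS(C)$ for (iii). For (iv), the nonzero rows $\{G_j\}_{j\in J}$ span $\RS(G)=\RS(C)$, and Proposition \ref{proposition G_i} makes their sequence of rightmost indices injective, so Theorem \ref{theorem complete-basis} identifies them as a complete basis of row-length of $\RS(C)$. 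For (v), I would invoke Proposition \ref{proposition_1.1}(i): since $(G_j)_{j\in J}$ is a basis of $\RS(C)$ and $f(\mbox{\boldmath $\mathrm{g}$}_j)=G_j$, the family $(\mbox{\boldmath $\mathrm{g}$}_j)_{j\in J}$ is a basis of a complementary space of $\Ker(f)=\NS(C)$. Combined with (i), which exhibits $\mbox{\boldmath $\mathrm{G}$}=\{\mbox{\boldmath $\mathrm{g}$}_j\}_{j\in J}\cup\{\mbox{\boldmath $\mathrm{g}$}_w\}_{w\in W}$ as a basis of the whole space, the leftover vectors $\{\mbox{\boldmath $\mathrm{g}$}_w\}_{w\in W}$ must span and be linearly independent in $\Ker(f)$, hence form a basis of $\Ker(f)$.

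The most delicate point is the bookkeeping underpinning (i): one must keep the index ranges straight while Jordan updates from later stages propagate back to earlier tracker rows, and verify that the coefficient of $\mbox{\boldmath $\mathrm{e}$}_n$ in $\mbox{\boldmath $\mathrm{g}$}_n$ is nonzero in each of the regimes $C_n=\mbox{\boldmath $\mathrm{0}$}$, $C_n\ne\mbox{\boldmath $\mathrm{0}$}$ but $L^{(n-1)}_n=\mbox{\boldmath $\mathrm{0}$}$, and $L^{(n-1)}_n\ne\mbox{\boldmath $\mathrm{0}$}$. Once this triangular structure is secured, (ii)--(v) reduce to short applications of Propositions \ref{proposition_1.2}, \ref{proposition_1.1}, \ref{proposition_3.1}, \ref{proposition G_i} and Theorem \ref{theorem complete-basis}.
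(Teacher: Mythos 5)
Your proposal is correct and follows essentially the same route as the paper: part (i) is the paper's lower-triangular-tracker argument (the paper packages your induction as the observation that each $L^{(n-1)}_i$ in (\ref{Eq-GJ2}) is ultimately a linear combination of $C_0,\dots,C_{n-1}$, then invokes Proposition \ref{proposition_1.2}), and (ii)--(iv) cite the same results. The only cosmetic difference is in (v), where you route through Proposition \ref{proposition_1.1}(i) and a complementary-subspace argument, while the paper expands an arbitrary element of $\Ker(f)$ directly in the basis $\mbox{\boldmath $\mathrm{G}$}$ and applies $f$ --- the same computation in different clothing.
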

\begin{proof} i) If $C_n=\textbf{0}$, then $G_n=\textbf{0}$ and so $\mbox{\boldmath $\mathrm{g}$}_n=\mbox{\boldmath $\mathrm{e}$}_n$. Let $C_n\not=\textbf{0}$.
Since each term $L^{(n-1)}_i$ in (\ref{Eq-GJ2}) is ultimately a linear combination of $C_{0},...,C_{n-1}$, we can write $L^{(n-1)}_{n}=C_n-\sum^{n-1}_{k=0}\alpha_{nk}C_k$. 
By applying the same sequence of row operations to the corresponding terms of $\mbox{\boldmath $\mathrm{E}$}$, we reach $\mbox{\boldmath $\mathrm{v}$}_n=\mbox{\boldmath $\mathrm{e}$}_{n}-\sum^{n-1}_{k=0}\alpha_{nk}\mbox{\boldmath $\mathrm{e}$}_k$. In view of (\ref{leading}), if $L^{(n-1)}_n\not=\textbf{0}$, then $\mbox{\boldmath $\mathrm{g}$}_n=\frac{1}{l^{(n-1)}_{n\rho_n}}\mbox{\boldmath $\mathrm{v}$}_n$, otherwise $G_n=L^{(n-1)}_n=\textbf{0}$ and $\mbox{\boldmath $\mathrm{g}$}_n=\mbox{\boldmath $\mathrm{v}$}_n$. Thus writing $\mbox{\boldmath $\mathrm{g}$}_n=\sum^n_{k=0} \gamma_{nk}\mbox{\boldmath $\mathrm{e}$}_k$\vspace{0.03in}, if $G_n=\textbf{0}$ the leading coefficient of $\mbox{\boldmath $\mathrm{g}$}_n$ is $\gamma_{nn}=1$, otherwise $\gamma_{nn}=\frac{1}{l^{(n-1)}_{n\rho_n}}$. Accordingly $(\gamma_{nk})_{(n,k)\in \omega\times\omega}$ is a lower triangular matrix with nonzero elements in the diagonal. Thus $\mbox{\boldmath $\mathrm{G}$}=(\mbox{\boldmath $\mathrm{g}$}_{n})_{n\in \omega}$ satisfies (\ref{eq-Bou1.1}), and the assertion follows from Proposition \ref{proposition_1.2}.
ii) Let $Q$ be the matrix of passage from $\mbox{\boldmath $\mathrm{E}$}$ to $\mbox{\boldmath $\mathrm{G}$}$. According to the previous statement $G=[f]_{\mbox{\scriptsize \boldmath $\mathrm{G}$}}^{\mbox{\scriptsize \boldmath $\mathrm{E}$}}$. Thus (\ref{change of basis}) implies that $G=Q\cdot C$ and so $G\sim C$.
iii) It follows directly from Proposition \ref{proposition_3.1}.
iv) In view of Proposition \ref{proposition G_i}, $J\ni i\mapsto \maxs (G_i)\in \omega$ is injective. As $\RS(G)=\RS(C)$, the set $\{G_{j}\}_{j\in J}$ is a generating system of $\RS(C)$. Corollary \ref{cor-GJ3} and Proposition \ref{proposition_1.3} imply the assertion.
v) It suffices to show that the subset $\{\mbox{\boldmath $\mathrm{g}$}_{w}\}_{w\in W}$ of $\mbox{\boldmath $\mathrm{G}$}$ is a generating system of $\NS(G)$. Let $\mbox{\boldmath $\mathrm{n}$}\in \NS(G)$. Writing $\mbox{\boldmath $\mathrm{n}$}$ in terms of the basis $\mbox{\boldmath $\mathrm{G}$}$ we have:
\begin{equation} \label{eq-nullbasis} \mbox{\boldmath $\mathrm{n}$}=\sum_{i\in \omega}\lambda_i\mbox{\boldmath $\mathrm{g}$}_i=\sum_{i\in J}\lambda_i\mbox{\boldmath $\mathrm{g}$}_i+\sum_{i\in W}\lambda_i\mbox{\boldmath $\mathrm{g}$}_{i}.
\end{equation}
On account of $f(\mbox{\boldmath $\mathrm{g}$}_w)=\textbf{0}$ for all $w\in W$, we conclude that
\[f(\mbox{\boldmath $\mathrm{n}$})=f(\sum_{i\in J}\lambda_i\mbox{\boldmath $\mathrm{g}$}_i)+f(\sum_{i\in W}\lambda_i\mbox{\boldmath $\mathrm{g}$}_{i})=\sum_{i\in J}\lambda_i f(\mbox{\boldmath $\mathrm{g}$}_i)=\sum_{i\in J}\lambda_i G_i.\]
But also $f(\mbox{\boldmath $\mathrm{n}$})=\textbf{0}$ and so $\sum_{i\in J}\lambda_i G_i=0$. The linear independence of $(G_j)_{j\in J}$ entails that $\lambda_i=\textbf{0}$ for all $i\in J$ and so (\ref{eq-nullbasis}) implies $\mbox{\boldmath $\mathrm{n}$}=\sum_{i\in W}\lambda_i\mbox{\boldmath $\mathrm{g}$}_{i}$, as required.
\end{proof}
\begin{proposition} \label{prop51} $\RS(L)=\RS(C)$.
\end{proposition}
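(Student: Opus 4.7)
The plan is to establish the two inclusions separately. Since Corollary \ref{cor-GJ1} already gives $\RS(L) \subseteq \RS(C)$, the task reduces to proving $\RS(C) \subseteq \RS(L)$. Using $\RS(C) = \RS(G)$ from Proposition \ref{prop4}(iii), and the fact that $\{G_j\}_{j\in J}$ spans $\RS(G)$ (Proposition \ref{prop4}(iv)), it suffices to show that each nonzero row $G_j$ belongs to $\RS(L)$.

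The key observation is that for every $j\in J$ the rows $G_j$ and $L_j$ share the same rightmost index $\rho_j$ and both carry $1$ as their right leading coefficient. Indeed, $\maxs(G_j)=\maxs(L_j)$ is extracted from the proof of Corollary \ref{cor-GJ3}(i); $g_{j\rho_j}=1$ by the normalisation (\ref{leading}); and $l_{j\rho_j}=1$ because $L$ is in \textbf{LRRF} (Definition \ref{definition-LRRF}(i)). Hence either $G_j - L_j = \mbox{\boldmath $\mathrm{0}$}$ or $\maxs(G_j - L_j) < \rho_j$.

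From here I would proceed by strong induction on $n\in\omega$, with inductive claim: $G_j \in \RS(L)$ for every $j\in J$ with $\rho_j \le n$. The base case $n=0$ is immediate, since if $0\in\Right(\RS(C))$ the corresponding $G_j$ and $L_j$ both coincide with $\mbox{\boldmath $\mathrm{e}$}_0$. For the inductive step, the difference $G_j - L_j$ lies in $\RS(C)$ by Corollary \ref{cor-GJ1} and Proposition \ref{prop4}(iii), so it can be written as a finite combination $\sum_{k\in F} a_k G_k$ of elements of the complete basis $\{G_k\}_{k\in J}$ of row-length of $\RS(C)$. The injectivity of the rightmost index map $\rho$ on $J$ (Theorem \ref{theorem complete-basis}) forces the rightmost index of $\sum_{k\in F} a_k G_k$ to equal $\max\{\rho_k : k\in F,\ a_k \neq 0\}$, so every contributing $\rho_k$ is strictly less than $\rho_j \le n$. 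The inductive hypothesis places each such $G_k$ in $\RS(L)$, whence $G_j - L_j \in \RS(L)$ and therefore $G_j \in \RS(L)$.

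The principal technical point, and really the only one requiring care, is the lemma that within a complete basis of rightmost index the rightmost index of a nonzero finite linear combination equals the maximum of the rightmost indices appearing with nonzero coefficient. This is a direct consequence of the injectivity of $\rho$ and of the fact that among the contributing basis vectors only the one of maximal rightmost index writes a nonzero entry in that extreme column; everything else follows from the structural results already developed in Sections \ref{sec:BasesOfSbspaces}--\ref{sec:TheInfiniteGJEliminationAndTheRowEquivalence}.
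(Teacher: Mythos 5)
Your argument is correct, but it takes a genuinely different route from the paper's. After the common reduction to showing $\RS(G)\subseteq \RS(L)$, the paper takes an \emph{arbitrary} $g=\sum_{i=0}^{\rho_k}\alpha_i\mbox{\boldmath $\mathrm{e}$}_i\in\RS(G)$, forms the explicit candidate $g'=\sum_{i\in \Right(L),\, i\le\rho_k}\alpha_i L'_i$ (rows of $L$ relabeled by their rightmost index), computes that $g-g'$ is supported entirely on columns outside $\Right(L)=\Right(\RS(G))$, and concludes $g-g'=0$ because a nonzero element of $\RS(G)$ must have rightmost index in that set; there is no induction, and one obtains for free an explicit expansion of every $g$ in the rows of $L$, with coefficients read off directly from the coordinates of $g$ at the positions in $\Right(L)$. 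You instead work basis vector by basis vector and induct on the rightmost index, exploiting that $G_j$ and $L_j$ share the same length and the same leading one, so that $G_j-L_j$ drops strictly in length and its expansion in the complete basis $\{G_k\}_{k\in J}$ involves only indices of smaller row-length. Both proofs rest on the same structural inputs (Corollary \ref{cor-GJ1}, Proposition \ref{prop4}, and $\Right(G)=\Right(L)=\Right(\RS(G))$ from Corollary \ref{cor-GJ3}), and the auxiliary lemma you flag --- that over a family with injective $\rho$ the rightmost index of a nonzero finite combination is the largest contributing $\rho_k$ --- is exactly the computation already carried out in the proof of Proposition \ref{proposition_1.3}(i), so it is available without further work. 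The trade-off is that the paper's version is more explicit and constructive, while yours avoids the double-sum bookkeeping at the cost of an induction. One minor citation point: the row-by-row equality $\maxs(G_i)=\maxs(L_i)$ you invoke is proved \emph{inside} Corollary \ref{cor-GJ3}(i) via (\ref{Eq-gj51}) and Theorem \ref{main-theorem}; the corollary's statement only records equality of the image sets, so it is cleaner to cite those two results directly.
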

\begin{proof} 
Corollary \ref{cor-GJ1} and Proposition \ref{prop4}(ii) imply, respectively, that $\RS(L)\subset \RS(C)=\RS(G)$. It remains to be shown that $\RS(G)\subset \RS(L)$. Corollary \ref{cor-GJ3} entails that $\Right(G)=\Right(L)=\Right(\RS(G))$. For simplifying notation, let us write: $\Right:=\Right(\RS(G))=\{\rho_j\}_{j\in J}$.
Let $g\in \RS(G)$ and
$g=\sum_{i=0}^{\rho_{k}} \alpha_i\mbox{\boldmath $\mathrm{e}$}_i$ 
for $\rho_k\in \Right$. Since $\rho:J\mapsto \Right$ is bijective, we can relabel the set of nonzero rows of $L$ by setting $L'_{\rho_j}:=L_{j}$  for all $j\in J$. Certainly $\maxs (L'_{\rho_j})=\rho_j$ for all $j\in J$ or $\maxs (L'_i)=i$ for all $i\in \Right$. Without loss of generality, we assume that $\rho_0<\rho_1<...<\rho_k$ and define $g'=\alpha_{\rho_0}L'_{\rho_0}+\alpha_{\rho_1}L'_{\rho_1}+...+\alpha_{\rho_k}L'_{\rho_k}$, or alternatively 
\vspace{0.05in}$\displaystyle g'=\sum^{\rho_k}_{\substack{i=0  \\  i\in \Right}} \alpha_iL'_i.$ 
Evidently $g'\in \RS(L)$ and so $g'\in \RS(G)$. We are to show that $g=g'$. Whereas $L$ is in LRRF, in each nonzero row $L'_i=(l'_{im})_{m\in\omega}$, $i\in\Right$, we have\vspace{0.05in}: $l'_{im}=0$  for all $m\in \Right$ with $m\not=i$ and  $l'_{ii}=1$.  Thus, $L'_i$ can be written as 
\vspace{0.05in}$\displaystyle L'_i=\mbox{\boldmath $\mathrm{e}$}_i+\sum^{i-1}_{\substack{ m=0 \\ m\not\in \Right}}l'_{im}\mbox{\boldmath $\mathrm{e}$}_{m}$
for all $i\in \Right$. On account of
\begin{equation*} \begin{split}
\displaystyle \sum^{\rho_k}_{\substack{i=0  \\  i\in \Right}} \alpha_iL'_i=&\sum^{\rho_k}_{\substack{i=0  \\  i\in \Right}} \alpha_i(\mbox{\boldmath $\mathrm{e}$}_i+\sum^{i-1}_{\substack{ m=0 \\ m\not\in \Right}}l'_{im}\mbox{\boldmath $\mathrm{e}$}_{m})=\sum^{\rho_k}_{\substack{i=0  \\  i\in \Right}} (\alpha_i\mbox{\boldmath $\mathrm{e}$}_i+\sum^{i-1}_{\substack{ m=0 \\ m\not\in \Right}}\alpha_il'_{im}\mbox{\boldmath $\mathrm{e}$}_{m}) \vspace{0.05in}\\
=& \sum^{\rho_k}_{\substack{ i=0 \\ i\in \Right}}\alpha_i\mbox{\boldmath $\mathrm{e}$}_i+\sum^{\rho_k}_{\substack{i=0 \\ i\in \Right}}\sum^{i-1}_{\substack{m=0 \\ m\not\in \Right}} \alpha_i l'_{i m}\mbox{\boldmath $\mathrm{e}$}_m
\end{split}
\end{equation*}
we deduce that
\begin{equation*} 
\begin{split} g-g' =& \sum_{i=0}^{\rho_k} \alpha_i\mbox{\boldmath $\mathrm{e}$}_i-\sum^{\rho_k}_{\substack{i=0  \\  i\in \Right}} \alpha_iL'_i \\
=& \sum^{\rho_k}_{\substack{i=0 \\ i\in \Right}}\alpha_i\mbox{\boldmath $\mathrm{e}$}_i+\sum^{\rho_k}_{\substack{m=0 \\ m\not\in \Right}} \alpha_m\mbox{\boldmath $\mathrm{e}$}_m-\left(\sum^{\rho_k}_{\substack{ i=0 \\ i\in \Right}}\alpha_i\mbox{\boldmath $\mathrm{e}$}_i+\sum^{\rho_k}_{\substack{i=0 \\ i\in \Right}}\sum^{i-1}_{\substack{m=0 \\ m\not\in \Right}} \alpha_i l'_{i m}\mbox{\boldmath $\mathrm{e}$}_m \right)\\
=& \sum^{\rho_k}_{ \substack{ m=0 \\  m\not\in \Right}} \alpha_m\mbox{\boldmath $\mathrm{e}$}_m-\sum^{\rho_k}_{\substack{i=0 \\ i\in \Right}}\sum^{i-1}_{\substack{m=0 \\ m\not\in \Right}} \alpha_i l'_{i m}\mbox{\boldmath $\mathrm{e}$}_m 
\end{split}
\end{equation*}
It follows immediately that $g-g'$ is a linear combination of $\mbox{\boldmath $\mathrm{e}$}_{m}$ with $m\not\in \Right$, which in turn implies that $\maxs (g-g')\not\in \Right$. However, since $g-g'\in \RS(G)$, it follows that $g-g'=0$; for if otherwise $\maxs (g-g')\in \Right$, that contradicts the previous result. Thus $g=g'$, and so $g\in \RS(L)$, as claimed.
\end{proof} 
\begin{proposition} \label{prop6} \emph{i)} The set of nonzero rows of $L$ is a complete basis of  row-length of $\RS(C)$. \emph{ii)} $L$ is row equivalent to $C$. \emph{ii)} $\mbox{\boldmath $\mathrm{L}$}$ is a basis of  ${\mathcal{F}}^{(\omega)}$.
\end{proposition}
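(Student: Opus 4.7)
The plan is to prove (i), (ii), (iii) in order, using Proposition~\ref{prop51} ($\RS(L)=\RS(C)$) and the LRRF structure of $L$ just established, together with the bases supplied in Proposition~\ref{prop4}.

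For (i) the observation is that by Proposition~\ref{proposition LRRF_1} the sequence of row-length of the nonzero rows of $L$ is injective, so Corollary~\ref{corollary_1.4}(i) applied to $L$ identifies $(L_{j})_{j\in J}$ as a complete basis of row-length of $\RS(L)$, which coincides with $\RS(C)$ by Proposition~\ref{prop51}. For (ii) I will invoke Proposition~\ref{proposition_3.1}, for which row equivalence reduces to coincidence of row spaces and nullities. The row-space equality is Proposition~\ref{prop51}. For the nullities, Proposition~\ref{proposition LREF_1}(iii) gives $\nul(L)=\card(W)$ from the LRRF structure of $L$, while Proposition~\ref{prop4}(v) supplies a basis $\{\mbox{\boldmath $\mathrm{g}$}_{w}\}_{w\in W}$ of $\NS(C)$, so $\nul(C)=\card(W)=\nul(L)$ and $L\sim C$ follows.

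For (iii) the strategy is to partition $\mbox{\boldmath $\mathrm{L}$}$ along $\omega=W\cup J$ and handle each piece separately. The crucial observation is that the Jordan phase never modifies a zero row: for $w\in W$ and every $n>w$ one has $L^{(n-1)}_{w}=\mathbf{0}$, so the scalar $-l^{(n-1)}_{w\rho_{n}}$ vanishes and $\mbox{\boldmath $\mathrm{l}$}^{(n)}_{w}=\mbox{\boldmath $\mathrm{l}$}^{(n-1)}_{w}$. Hence $\mbox{\boldmath $\mathrm{l}$}_{w}=\mbox{\boldmath $\mathrm{g}$}_{w}$ for every $w\in W$, and Proposition~\ref{prop4}(v) then yields that $\{\mbox{\boldmath $\mathrm{l}$}_{w}\}_{w\in W}$ is a basis of $\NS(C)=\Ker(f)$. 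For the remaining indices, the construction of $\mbox{\boldmath $\mathrm{L}$}$ ensures $f(\mbox{\boldmath $\mathrm{l}$}_{j})=L_{j}$ for all $j\in J$, and part (i) shows that $\{L_{j}\}_{j\in J}$ is a basis of $\RS(C)=\Img(f)$; so Proposition~\ref{proposition_1.1}(i) applied to $C$ delivers that $\{\mbox{\boldmath $\mathrm{l}$}_{j}\}_{j\in J}$ is a basis of a complementary subspace of $\NS(C)$. Combining the two disjoint pieces one obtains $\mathcal{F}^{(\omega)}=\NS(C)\oplus \Span(\mbox{\boldmath $\mathrm{l}$}_{j})_{j\in J}$, and both spanning and linear independence of $\mbox{\boldmath $\mathrm{L}$}$ fall out of this direct-sum decomposition.

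The hardest step will be (iii), because the triangularity argument used for $\mbox{\boldmath $\mathrm{G}$}$ via Proposition~\ref{proposition_1.2} is unavailable here: the Jordan operations at stages $n>k$ inject multiples of $\mbox{\boldmath $\mathrm{g}$}_{n}$ into $\mbox{\boldmath $\mathrm{l}$}_{k}$, breaking the lower triangular pattern, and the counterexample immediately following Proposition~\ref{proposition_1.2} warns that upper triangular indexed families with unit diagonal need not span $\mathcal{F}^{(\omega)}$. The fix outlined above sidesteps this obstruction by trading the triangularity argument for the kernel/range dichotomy of Proposition~\ref{proposition_1.1}, made accessible by the fact that zero rows are left untouched by the Jordan phase.
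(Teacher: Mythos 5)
Your proposal is correct and follows essentially the same route as the paper: part (i) via injectivity of the row-length map plus Proposition~\ref{prop51}, part (ii) via Proposition~\ref{proposition_3.1} after matching row spaces and nullities through $\card(W)$, and part (iii) via the decomposition $\mbox{\boldmath $\mathrm{L}$}=\{\mbox{\boldmath $\mathrm{l}$}_{w}\}_{w\in W}\cup\{\mbox{\boldmath $\mathrm{l}$}_{j}\}_{j\in J}$ with Proposition~\ref{prop4}(v) and Proposition~\ref{proposition_1.1}(i). Your explicit justification that the Jordan phase leaves zero rows (and hence the corresponding $\mbox{\boldmath $\mathrm{l}$}_{w}$) untouched, giving $\mbox{\boldmath $\mathrm{l}$}_{w}=\mbox{\boldmath $\mathrm{g}$}_{w}$, is a welcome elaboration of the step the paper attributes to Corollary~\ref{cor-GJ3}.
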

\begin{proof} i) Whereas the map of row-length of the nonzero rows of $L$ is injective, Proposition \ref{proposition LREF_1} entails that $(L_{j})_{j\in J}$ is a complete basis of row-length of $\RS(L)$. Thus Proposition \ref{prop51} implies that $(L_{j})_{j\in J}$ is a complete basis of  row-length of $\RS(C)$, as asserted. ii) Corollary \ref{cor-GJ3} implies $\mbox{\boldmath $\mathrm{l}$}_{w}=\mbox{\boldmath $\mathrm{g}$}_{w}$ for all $w\in W$. As a 
direct consequence $\nul(L)=\nul(G)={\rm card}(W)$. As $C\sim G$, we conclude that $\nul(L)=\nul(G)=\nul(C)$. Since $\RS(L)= \RS(C)$, Proposition \ref{proposition_3.1} shows the assertion. iii) Evidently $\mbox{\boldmath $\mathrm{L}$}=\{\mbox{\boldmath $\mathrm{l}$}_{w}\}_{w\in W}\cup \{ \mbox{\boldmath $\mathrm{l}$}_{j}\}_{j\in J}$. Proposition \ref{prop4} entails that $\{\mbox{\boldmath $\mathrm{g}$}_{w}\}_{w\in W}$ is a basis of $\NS(C)$ and so is $\{\mbox{\boldmath $\mathrm{l}$}_{w}\}_{w\in W}$. Moreover, $\{L_j\}_{j\in J}$ is a basis of $\RS(C)$ and $f(\mbox{\boldmath $\mathrm{l}$}_{j})=L_j, \ j\in J$. Proposition \ref{proposition_1.1} (i) entails that $\{\mbox{\boldmath $\mathrm{l}$}_{j}\}_{j\in J}$ is a basis of a complementary space of $\NS(C)$ and the assertion follows.
\end{proof}
As a general conclusion, a row equivalent LRRF, say $L$, of an arbitrary $C\in \RFM$ is constructed, by means of the infinite Gauss-Jordan algorithm implemented with RPS.
Moreover, Corollary \ref{corollary LRRF_4} entails the existence of a permutation matrix $P$ so that the matrix $H=P\cdot L$ is a QHF of $L$. Since $H\sim L\sim C$, it turns out that $H$ is a QHF of $C$. The latter provides a proof for the existence of QHFs, without invoking the axiom of countable choice.
\section{Construction of Submatrices of LRRF-QHF}
\label{sec:ConstructionOfQuasiHermiteForms}
From a computational point of view the main objective is the construction of a sequence of submatrices of a QHF, $H$, of $C\in \RFM$, as established in this Section. 
Following the notation of Section \ref{sec:TheInfiniteGJEliminationAndTheRowEquivalence}, we start with the construction of a chain of submatrices of a LRRF, $L$, of $C$. This is not a direct consequence of the algorithmic process formulated in the previous Section, since $L^{(n)}_i$ may differ from $L_i$ for small enough $n$ and so ${\mathcal{L}}^{(n)}$ may not be a submatrix of $L$. 
\subsection{Construction of Submatrices of LRRF}
\label{sec:ConstructionOfASequenceOfSubmatricesOfL}
Let $m\ge n$. In view of (\ref{eq-fmat}), it is convenient to introduce the matrix notation:
\begin{equation} \label{eq-fmat3}
{\mathcal{L}}^{(m)}\!\!\mid_n=\left(\begin{array}{l}
                                           L^{(m)}_{0} \\
                                           L^{(m)}_{1}\vspace{-0.05in}\\
                                          ... \\
                                           L^{(m)}_{n-1} \\
                                           L^{(m)}_n
\end{array}\right)
\end{equation}
Moreover $A\sqsubset B$ will indicate that $A$ is a strict submatrix of $B$. Certainly ${\mathcal{L}}^{(n)}\!\!\mid_n={\mathcal{L}}^{(n)}$. As ${\mathcal{L}}^{(m)}\!\!\mid_n\sqsubset{\mathcal{L}}^{(m)}\!\!\mid_m$ for $n\le m$, ${\mathcal{L}}^{(m)}\!\!\mid_n$ is in LRRF too. 
\begin{proposition} \label{prop-GJ2}
For every $n\in \omega$ there exists $\epsilon_n\in \omega$ with $\epsilon_n\ge n$ such that ${\mathcal{L}}^{(\epsilon_n)}\!\!\mid_n\sqsubset L$.
\end{proposition}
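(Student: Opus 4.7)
The plan is to assemble the desired $\epsilon_n$ as a finite maximum of the stabilisation indices $\mbox{\scriptsize $\mathrm{M}$}_i$ guaranteed by Theorem \ref{main-theorem}, applied to each of the finitely many rows with index $i \le n$.

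More precisely, for each fixed $n\in\omega$, I would invoke Theorem \ref{main-theorem} once for every index $i\in\{0,1,\dots,n\}$. This produces natural numbers $\mbox{\scriptsize $\mathrm{M}$}_0,\mbox{\scriptsize $\mathrm{M}$}_1,\dots,\mbox{\scriptsize $\mathrm{M}$}_n$ with $\mbox{\scriptsize $\mathrm{M}$}_i\ge i$ and the property that $L^{(m)}_i=L_i$ whenever $m\ge \mbox{\scriptsize $\mathrm{M}$}_i$. Then I would set
\[
\epsilon_n:=\max\{\mbox{\scriptsize $\mathrm{M}$}_0,\mbox{\scriptsize $\mathrm{M}$}_1,\dots,\mbox{\scriptsize $\mathrm{M}$}_n\},
\]
which lies in $\omega$ because it is a maximum of finitely many natural numbers. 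Since $\mbox{\scriptsize $\mathrm{M}$}_n\ge n$, we immediately get $\epsilon_n\ge n$, as required by the statement.

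To conclude, for each $i$ with $0\le i\le n$ the inequality $\epsilon_n\ge\mbox{\scriptsize $\mathrm{M}$}_i$ combined with the stabilisation property from Theorem \ref{main-theorem} gives $L^{(\epsilon_n)}_i=L_i$. Referring to the matrix notation introduced in (\ref{eq-fmat3}), this means that the rows of ${\mathcal{L}}^{(\epsilon_n)}\!\!\mid_n$ are exactly $L_0,L_1,\dots,L_n$, i.e.\ the first $n+1$ rows of the constructed matrix $L=(L_k)_{k\in\omega}$. Since $L$ has infinitely many rows, this inclusion is strict, hence ${\mathcal{L}}^{(\epsilon_n)}\!\!\mid_n\sqsubset L$.

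I do not anticipate any real obstacle: the entire argument is a bookkeeping exercise reducing a claim about finitely many rows to the pointwise stabilisation already supplied by Theorem \ref{main-theorem}. The only subtlety worth flagging explicitly is the finiteness of the index set over which the maximum is taken, which is what prevents this simple trick from working if one tried to stabilise all rows simultaneously; that is precisely why the proposition only asserts the existence of submatrices of $L$ rather than the convergence of the full matrix sequence $({\mathcal{L}}^{(m)})_{m\in\omega}$ in any stronger sense.
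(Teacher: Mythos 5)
Your proposal is correct and follows essentially the same argument as the paper: apply Theorem \ref{main-theorem} to each of the finitely many indices $0\le i\le n$, take $\epsilon_n$ to be the maximum of the resulting stabilisation indices $\mbox{\scriptsize $\mathrm{M}$}_i$, and conclude that ${\mathcal{L}}^{(\epsilon_n)}\!\!\mid_n=(L_k)_{0\le k\le n}\sqsubset L$. No changes are needed.
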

\begin{proof}
Let $n\in \omega$ and $0\leq k \leq n$. According to Theorem \ref{main-theorem}, for each $k$ there is ${\mbox{\scriptsize $\mathrm{M}$}}_k\in \omega$ such that $L^{({\mbox{\scriptsize $\mathrm{M}$}}_k)}_k=L_k$. Let $\epsilon_n=\max({\mbox{\scriptsize $\mathrm{M}$}}_k)_{0\leq k \leq n}$. As ${\mbox{\scriptsize $\mathrm{M}$}}_k \le \epsilon_n$, it follows that $L^{(\epsilon_n)}_{k}=L^{({\mbox{\scriptsize $\mathrm{M}$}}_k)}_{k}=L_{k}$. Thus ${\mathcal{L}}^{(\epsilon_n)}\!\!\mid_n=(L_{k})_{0\leq k \leq n}\sqsubset L$, as claimed.
\end{proof}
As a consequence of Proposition \ref{prop-GJ2}, a chain $\{{\mathcal{L}}^{(\epsilon_n)}\!\!\mid_n\}_{n\in \omega}$, is generated satisfying:
\begin{equation} \label{chain} 
{\mathcal{L}}^{(\epsilon_0)}\!\!\mid_0\sqsubset {\mathcal{L}}^{(\epsilon_1)}\!\!\mid_1\sqsubset ... \sqsubset {\mathcal{L}}^{(\epsilon_n)}\!\!\mid_n \sqsubset ... \sqsubset L
\end{equation}
Using set-theoretic notation ${\mathcal{L}}^{(\epsilon_n)}\!\!\mid_n=\{(k,L_k): 0\le k\le n\}=L\!\!\mid_n$, whence $\displaystyle L=\bigcup_{n\in \omega}{\mathcal{L}}^{(\epsilon_n)}\!\!\mid_n \in \RFM$.
The infinite Gauss-Jordan algorithm directly generates the sequence $\{{\mathcal{L}}^{(n)}\}_{n\in \omega}$ along with the subsequence $\{{\mathcal{L}}^{(\epsilon_n)}\}_{n\in \omega}$, but in general ${\mathcal{L}}^{(\epsilon_n)}\not\sqsubset L$. However, by virtue of Proposition \ref{prop-GJ2}, since ${\mathcal{L}}^{(\epsilon_n)}\!\!\mid_n\sqsubset {\mathcal{L}}^{(\epsilon_n)}$, this algorithm also generates the chain $\{{\mathcal{L}}^{(\epsilon_n)}\!\!\mid_n\}_{n\in \omega}$ of submatrices of $L$ satisfying (\ref{chain}). Accordingly, given any desired number of rows, say $n$,
if $N$ is large enough ($N\ge \epsilon_n$), the submatrix ${\mathcal{L}}^{(N)}|_n$ of ${\mathcal{L}}^{(N)}$ is also a submatrix of $L$.
\subsection{An Extension of the Infinite Gauss-Jordan Algorithm in Constructing Submatrices of QHF}
\label{sec:ConstructionViaInfiniteGaussJordanAlgorithm}
The infinite Gauss-Jordan algorithm implemented with RPS is extended by means of a reordering routine generating a chain of submatrices of a QHF of $C\in \RFM$. Let us call $\mathfrak{m}(C\!\!\mid_n):={\rm max}\{\maxs(C_k):\ 0\le k \le n\}$. As $L$ is in LRRF, the map of row-length of $L$ is injective and so $\maxs(L_{n+1})\not=\mathfrak{m}(L\!\!\mid_n)$. In view of the proof of Proposition \ref{proposition LRRF_3}, the definition of $\mu: \omega\mapsto \omega$ in (\ref{bijection}) indicates that row permutations must apply to the set of nonzero rows exclusively, since $\mu$ maps $J$ onto $J$. Meanwhile the position of zero rows should remain unchanged, since $\mu:W\mapsto W$ is the identity. As it is shown in this Section, these conditions prevent the grouping of the zero rows on the top of the permuted matrix, thus keeping the matrix in $\RFM$. 
\paragraph{Reordering Routine:}
\label{sec:ReorderingRoutine}
Rearrange non-zero rows of $L\!\!\mid_n$ in strictly increasing row-length, while leaving the position of zero rows unchanged. 

Choose a top submatrix of $L$, say $L\!\!\mid_{k_0}$. The reordering routine applied to $L\!\!\mid_{k_0}$ results in a matrix ${\mathcal{Q}}^{(k_0)}=P_{k_0}\cdot L\!\!\mid_{k_0}$, where $P_{k_0}$ is a permutation matrix. In the next step ${\mathcal{Q}}^{(k_0)}$ is augmented by the row $L_{k_0+1}$. If $L_{k_0+1}=\textbf{0}$, a new row is included. If $L_{k_0+1}\not=\textbf{0}$ and $\maxs(L_{k_0+1})<\mathfrak{m}(L\!\!\mid_{k_0})$ the reordering routine generates ${\mathcal{Q}}^{(k_0+1)}=P_{k_0+1}\cdot L\!\!\mid_{k_0+1}$ moving $L_{k_0+1}$ in a lower position. Meanwhile, the last nonzero row of ${\mathcal{Q}}^{(k_0)}$ of row-length $\mathfrak{m}({\mathcal{Q}}^{(k_0)})$ is placed in the highest row position of ${\mathcal{Q}}^{(k_0+1)}$, thus not belonging to ${\mathcal{Q}}^{(k_0+1)}\!\!\mid_{k_0}$. Thereafter a new row is included and the process continues at infinitum. 

The infinite sequence $({\mathcal{Q}}^{(k)})_{k\ge k_0}$ consists of matrices in QHF. Moreover, we can start applying the reordering routine to any top submatrix of $L$. Fixing $k$, a matrix ${\mathcal{Q}}^{(n)}\!\!\mid_k$ will change, if there is some $m$ with $m>n$ such that $L_m\not=\textbf{0}$ and $\maxs(L_m)<\mathfrak{m}({\mathcal{Q}}^{(n)}\!\!\mid_k)$.
After moving $L_m$ in a lower position its successive nonzero rows are placed in higher positions, meanwhile the positions of zero rows are fixed. It turns out that, if $k<n<m$, then
\begin{align}\label{rearr}
{\mathcal{Q}}^{(m)}\!\!\mid_k& ={\mathcal{Q}}^{(n)}\!\!\mid_k \Longleftrightarrow\mathfrak{m}({\mathcal{Q}}^{(m)}\!\!\mid_k)=\mathfrak{m}({\mathcal{Q}}^{(n)}\!\!\mid_k)\\
{\mathcal{Q}}^{(m)}\!\!\mid_k& \not={\mathcal{Q}}^{(n)}\!\!\mid_k \Longleftrightarrow\mathfrak{m}({\mathcal{Q}}^{(m)}\!\!\mid_k)<\mathfrak{m}({\mathcal{Q}}^{(n)}\!\!\mid_k)
\end{align}
Thus, if $k<n<m$, then $\mathfrak{m}({\mathcal{Q}}^{(m)}\!\!\mid_k)\le \mathfrak{m}({\mathcal{Q}}^{(n)}\!\!\mid_k)$\vspace{0.02in}. Let us apply the reordering routine to a concrete matrix $A=\left( \begin{array}{c} {\mathcal{Q}}^{(3)} \\ \hline  L_4 \end{array}\right)$\vspace{0.02in}, that is ${\mathcal{Q}}^{(3)}$ augmented by $L_4$. 
\begin{equation*}
\left( \begin{array}{c} {\mathcal{Q}}^{(3)} \\ \hline  L_4 \end{array}\right)=
\left( \begin{array}{ccccc} 
1 & 0 & 0 &  0 &... \\
0 & 0 & 0 &  0 &... \\
0 & 0 & 1 &  0 &... \\
0 & 0 & 0 &  0 &... \\
\hline \vspace{-0.10in}\\
0 & 1 & 0 & 0 &...\\
\end{array} \right)\longrightarrow
                   {\mathcal{Q}}^{(4)}= \left( \begin{array}{ccccc} 
                    1 & 0 & 0 &  0 &... \\
                    0 & 0 & 0 &  0 &... \\
                    0 & 1 & 0 &  0 &...\\
                    0 & 0 & 0 &  0 &... \\
                    0 & 0 & 1 &  0 &... \\
                    \end{array} \right)  
\end{equation*}
The resulting matrix ${\mathcal{Q}}^{(4)}$ is a QHF of $A$ such that
\[{\mathcal{Q}}^{(3)}={\mathcal{Q}}^{(3)}\!\!\mid_3= \left( \begin{array}{ccccc} 
                    1 & 0 & 0 &  0 &... \\
                    0 & 0 & 0 &  0 &... \\
                    0 & 0 & 1 &  0 &... \\
                    0 & 0 & 0 &  0 &... \end{array} \right) \not= \left( \begin{array}{ccccc} 
                                                                    1 & 0 & 0 &  0 &... \\
                                                                    0 & 0 & 0 &  0 &... \\
                                                                    0 & 1 & 0 &  0 &... \\
                                                                    0 & 0 & 0 &  0 &... \end{array} \right)=  {\mathcal{Q}}^{(4)}\!\!\mid_3\] 
and $\mathfrak{m}({\mathcal{Q}}^{(4)}\!\!\mid_3)\lneqq \mathfrak{m}({\mathcal{Q}}^{(3)}\!\!\mid_3)$. Furthermore, as the next example illustrates, the condition that the positions of zero rows must be fixed is sufficient to avoid grouping zero rows on the top of the matrix. 
\begin{example} \label{example grouping zero rows}{\rm
Let $A$ be the row-finite $\omega\times\omega$ matrix consisting of the same repeated row $A_n=(1,0,0,...)$ for all $n\in \omega$. The infinite Gauss-Jordan algorithm is implemented with row permutations grouping zero rows on the top part of ${\mathcal{Q}}^{(n)}$. This process gives rise to the following sequence of matrices: 
\[
{\mathcal{Q}}^{(1)}= \left( \begin{array}{cccc} 
0 & 0 & 0 & ... \\
1 & 0 & 0 &... \\
\end{array} \right) \longrightarrow
                   {\mathcal{Q}}^{(2)}= \left( \begin{array}{cccc} 
                    0 & 0 & 0 & ... \\
                    0 & 0 & 0 & ... \\
                    1 & 0 & 0 & ...\\
                    \end{array} \right)  \longrightarrow ...
\]
Upon algorithm completion, the only possible alternative assumption to an undecidable problem is a matrix of the form:
\[
H_1= \left( \begin{array}{cccc} 
0 & 0 & 0 & ... \\
0 & 0 & 0 &... \vspace{-0.07in}\\
.&.&.&... \vspace{-0.1in}\\
.&.&.&... \vspace{-0.1in}\\
.&.&.&... \\
\hline \vspace{-0.15in}\\
1 & 0 & 0 & ...
\end{array} \right)
\]
As the indexing set of zero rows is $\omega$, the row indexing set of $H_1$ is $\omega+1$, whence $H_1\not\in \RFM$. 
Nevertheless, by keeping the position of zero rows unchanged the outcome is the matrix:
\[
H_2= \left( \begin{array}{cccc} 
1 & 0 & 0 & ... \\
0 & 0 & 0 &... \\
0 & 0 & 0 &...\vspace{-0.05in} \\
.&.&.&... \vspace{-0.1in}\\
.&.&.&... \vspace{-0.1in}\\
.&.&.&...
\end{array} \right)
\]
Since $1+\omega=\omega$, it follows that $H_2\in \RFM$.}
\end{example}
The subsequent theorem shows that row permutations produced by the reordering routine do not affect a top submatrix of ${\mathcal{Q}}^{(k)}$ for large enough $k$.
\begin{theorem} \label{theorem row-rearrangement}
For every $k\in \omega$ there exists $\delta_k\in \omega$ with ${\delta}_{k}\ge k$ such that
\[ \forall n : n\ge \delta_k \Rightarrow  {\mathcal{Q}}^{(n)}\!\!\mid_k ={\mathcal{Q}}^{(\delta_k)}\!\!\mid_k\]
\end{theorem}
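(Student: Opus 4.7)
The plan is to reduce the statement to a well-ordering argument on the sequence of integers $a_n := \mathfrak{m}({\mathcal{Q}}^{(n)}\!\!\mid_k)$, $n \ge k$. Fix $k \in \omega$. The equivalences collected in \eqref{rearr} say exactly that, for $k < n < m$, the top submatrix ${\mathcal{Q}}^{(m)}\!\!\mid_k$ either coincides with ${\mathcal{Q}}^{(n)}\!\!\mid_k$ (which happens if and only if $a_m = a_n$) or differs from it (which happens if and only if $a_m < a_n$). In particular, the sequence $(a_n)_{n \ge k}$ is weakly decreasing in $\omega$.

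Next I would invoke the well-ordering of $\omega$: any strictly decreasing sequence of natural numbers is necessarily finite, hence a weakly decreasing sequence in $\omega$ is eventually constant. Concretely, define
\[
\delta_k = \min\{\, n \ge k : a_n = a_m \ \text{for all} \ m \ge n\,\},
\]
which exists because the set of eventual stabilisation indices is nonempty (otherwise one could extract a strictly decreasing infinite subsequence of $(a_n)$ in $\omega$, contradicting the well-ordering). Clearly $\delta_k \ge k$.

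Finally, for any $n \ge \delta_k$, one has $a_n = a_{\delta_k}$ by the choice of $\delta_k$, and then the first line of \eqref{rearr} (applied with $\delta_k$ in place of $n$ and $n$ in place of $m$) yields ${\mathcal{Q}}^{(n)}\!\!\mid_k = {\mathcal{Q}}^{(\delta_k)}\!\!\mid_k$, as required. The only mildly delicate point is checking that \eqref{rearr} applies to the pairs I need, which is immediate since $k \le \delta_k \le n$ throughout; the substance is the well-ordering of $\omega$, and there is no real obstacle beyond translating the stabilisation of the scalar invariant $\mathfrak{m}(\cdot)$ into stabilisation of the matrix block.
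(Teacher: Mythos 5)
Your proof is correct and is essentially the paper's argument in contrapositive form: both rest on the observation that $\mathfrak{m}({\mathcal{Q}}^{(n)}\!\!\mid_k)$ is non-increasing in $n$ by (\ref{rearr}) and hence, by the well-ordering of $\omega$, cannot decrease infinitely often, after which (\ref{rearr}) converts stabilisation of the scalar into stabilisation of the block. The paper phrases this as a proof by contradiction that explicitly constructs an infinite strictly decreasing sequence $\mathfrak{m}({\mathcal{Q}}^{(\delta_0)}\!\!\mid_k)>\mathfrak{m}({\mathcal{Q}}^{(\delta_1)}\!\!\mid_k)>\cdots$, whereas you argue directly that a weakly decreasing sequence in $\omega$ is eventually constant; the difference is purely presentational.
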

\begin{proof} 
On the contrary, let there is some $k\in \omega$ such that for every $\delta>k$
\begin{equation} \label{eq-contradiction2}
  \exists n_{\delta}: n_{\delta}> \delta \Rightarrow {\mathcal{Q}}^{(n_{\delta})}\!\!\mid_k \not={\mathcal{Q}}^{(\delta)}\!\!\mid_k
\end{equation} 
In view of (\ref{rearr}), the latter statement is equivalent to the following: There exists $k\in \omega$ such that for every $\delta>k $
\begin{equation} \label{eq-contradiction3}
  \exists n_{\delta}: n_{\delta}> \delta \Rightarrow \mathfrak{m}({\mathcal{Q}}^{(n_{\delta})}\!\!\mid_k)
  < \mathfrak{m}({\mathcal{Q}}^{(\delta)}\!\!\mid_k). 
\end{equation} 
A sequence of natural numbers is constructed as follows. Applying (\ref{eq-contradiction3}) with $\delta=k+1$, there is $n_{k+1}: n_{k+1}>k+1$ such that $\mathfrak{m}({\mathcal{Q}}^{(n_{k+1})}\!\!\mid_k)<\mathfrak{m}({\mathcal{Q}}^{(k+1)}\!\!\mid_k)$. Let us call $\delta_1={\rm min}\{n_{k+1}>k+1: \mathfrak{m}({\mathcal{Q}}^{(n_{k+1})}\!\!\mid_k)<\mathfrak{m}({\mathcal{Q}}^{(k+1)}\!\!\mid_k)$. As a direct consequence, $\mathfrak{m}({\mathcal{Q}}^{({\delta}_1)}\!\!\mid_k)<\mathfrak{m}({\mathcal{Q}}^{(k+1)}\!\!\mid_k)$.
Applying (\ref{eq-contradiction3}) with $\delta= \delta_{1}$ we have
\begin{equation} \label{eq-contradiction4}
  \exists n_{\delta_1}: n_{\delta_1}> \delta_1 \Rightarrow \mathfrak{m}({\mathcal{Q}}^{(n_{\delta_1})}\!\!\mid_k)<\mathfrak{m}({\mathcal{Q}}^{(\delta_1)}\!\!\mid_k)
\end{equation}
and we define $\delta_{2}={\rm min}\{n_{{\delta}_{1}}> \delta_{1}: \mathfrak{m}({\mathcal{Q}}^{(n_{\delta_1})}\!\!\mid_k)<\mathfrak{m}({\mathcal{Q}}^{(\delta_1)}\!\!\mid_k)\}$. Certainly $\mathfrak{m}({\mathcal{Q}}^{({\delta}_2)}\!\!\mid_k)<\mathfrak{m}({\mathcal{Q}}^{(\delta_1)}\!\!\mid_k)$. The process continues in this manner ad infinitum. Let us denote  $\delta_0:=k+1$. Inasmuch as $\delta_0<\delta_1<\delta_2<...$ and $\mathfrak{m}({\mathcal{Q}}^{(\delta_0)}\!\!\mid_k)> \mathfrak{m}({\mathcal{Q}}^{(\delta_1)}\!\!\mid_k)>\mathfrak{m}({\mathcal{Q}}^{(\delta_2)}\!\!\mid_k)>... $, a strictly decreasing infinite sequence in $\omega$ is constructed, which is impossible.
\end{proof}
As the rows of ${\mathcal{Q}}^{(n)}\!\!\mid_k$, $n\ge \delta_k$, remain invariant under further row permutations by means of the reordering routine, we can write ${\mathcal{Q}}^{(\delta_{k})}\!\!\mid_k=(H_n)_{0\le n\le k}$ for all $k\in \omega$ and define \vspace{0.03in}$H=\bigcup_{k\in \omega}{\mathcal{Q}}^{(\delta_{k})}\!\!\mid_k=\bigcup_{k\in \omega} \{(n,H_n): 0\le n\le k\}$\vspace{0.03in}. Certainly $H\in \RFM$. Moreover, taking into account that $L$ is a LRRF of $C$ and $H$ has the same sets of nonzero and zero rows as $L$, it follows that $\RS(L)=\RS(H)$ and $\nul(L)=\nul(H)$. Thus Proposition \ref{proposition_3.1} implies $H\sim L$. Also, as $H$ is in LRRF of strictly increasing sequence of row-length, $H$ is a QHF of $C$. As a conclusion, applying the reordering routine to $L$ we deduce:

\begin{corollary} \label{corollary row-rearrangement_1} For every $n\in \omega$ there is $\delta_{n}\in\omega$ with $\delta_{n}\ge n$ such that ${\mathcal{Q}}^{(\delta_n)}\!\!\mid_n\sqsubset H$. Thus a chain of top submatrices of $H$ is generated satisfying:
\begin{equation} \label{submatrices of QHF}
{\mathcal{Q}}^{(\delta_0)}\!\!\mid_0 \sqsubset {\mathcal{Q}}^{(\delta_1)}\!\!\mid_1\sqsubset ... \sqsubset {\mathcal{Q}}^{(\delta_n)}\!\!\mid_n\sqsubset ...\sqsubset H
\end{equation}
\end{corollary}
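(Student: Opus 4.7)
The statement is essentially an immediate consequence of Theorem \ref{theorem row-rearrangement} together with the construction of $H$ carried out in the paragraph immediately preceding the corollary, so the plan is short.

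First, for each $n\in\omega$, Theorem \ref{theorem row-rearrangement} supplies some $\delta_n\in\omega$ with $\delta_n\ge n$ such that ${\mathcal{Q}}^{(m)}\!\!\mid_n={\mathcal{Q}}^{(\delta_n)}\!\!\mid_n$ for all $m\ge\delta_n$. In other words, once the reordering routine has processed at least $\delta_n$ rows, no further permutation produced by the routine can disturb the first $n+1$ rows. I would then recall that in the construction of $H$ the row $H_j$ was defined precisely by the equation ${\mathcal{Q}}^{(\delta_k)}\!\!\mid_k=(H_j)_{0\le j\le k}$, valid for every $k\in\omega$; specialising this to $k=n$ yields ${\mathcal{Q}}^{(\delta_n)}\!\!\mid_n=(H_j)_{0\le j\le n}$, which by the very definition $H=\bigcup_{k\in\omega}\{(j,H_j):0\le j\le k\}$ is a top submatrix of $H$. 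This establishes the first assertion ${\mathcal{Q}}^{(\delta_n)}\!\!\mid_n\sqsubset H$.

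For the chain (\ref{submatrices of QHF}) I would simply observe that, since for each $n$ the matrix ${\mathcal{Q}}^{(\delta_n)}\!\!\mid_n$ coincides with the finite sequence $(H_0,H_1,\ldots,H_n)$, the top submatrices $\{{\mathcal{Q}}^{(\delta_n)}\!\!\mid_n\}_{n\in\omega}$ of the single matrix $H$ are totally ordered by strict inclusion of row indexing sets $\{0,1,\ldots,n\}\subsetneq\{0,1,\ldots,n+1\}$. Hence ${\mathcal{Q}}^{(\delta_n)}\!\!\mid_n\sqsubset {\mathcal{Q}}^{(\delta_{n+1})}\!\!\mid_{n+1}$ for every $n$, and (\ref{submatrices of QHF}) follows.

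I do not see a substantive obstacle: Theorem \ref{theorem row-rearrangement} is doing the real work, and the corollary is just repackaging that stabilisation result in chain form. The only mildly delicate point worth stating explicitly is that the $\delta_n$ produced by Theorem \ref{theorem row-rearrangement} need not be monotone in $n$, but this is harmless because, for the chain, one uses only that each ${\mathcal{Q}}^{(\delta_n)}\!\!\mid_n$ equals the length-$(n+1)$ top submatrix of the fixed matrix $H$; if one prefers a monotone sequence, one can inductively replace $\delta_{n+1}$ by $\max\{\delta_{n+1},\delta_n+1\}$, which still satisfies the stabilisation property of Theorem \ref{theorem row-rearrangement} for the first $n+2$ rows.
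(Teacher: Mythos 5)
Your proposal is correct and follows essentially the same route as the paper: the paper likewise derives the corollary directly from Theorem \ref{theorem row-rearrangement} together with the definition ${\mathcal{Q}}^{(\delta_{k})}\!\!\mid_k=(H_n)_{0\le n\le k}$ and $H=\bigcup_{k\in \omega}{\mathcal{Q}}^{(\delta_{k})}\!\!\mid_k$ given in the paragraph preceding the statement, offering no separate proof. Your remark about the non-monotonicity of $(\delta_n)$ being harmless is a sensible clarification but not a divergence from the paper's argument.
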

\textbf{The Extended Algorithm}. The Infinite Gauss-Jordan algorithm is supplemented by including the reordering routine as the forth and final part of the $n$-th stage of the process. In this setting, instead of applying the reordering routine directly to submatrices of $L$, it now applies to matrices of the form ${\mathcal{L}}^{(n)}$, which may not be submatrices of $L$. However, as we show in what follows, given any desired number of rows, say $n$, for a large enough $N\ge n$, it turns out that ${\mathcal{L}}^{(N)}\!\!\mid_n \sqsubset L$ and simultaneously ${\mathcal{Q}}^{(N)}\!\!\mid_n \sqsubset H$, where $H$ is a QHF of $C$.
Let $\delta_n, \epsilon_n$ be as in Corollary \ref{corollary row-rearrangement_1} and Proposition \ref{prop-GJ2}, respectively. For any $n\in \omega$, let $N=\max\{\epsilon_n,\delta_n\}$. At the $N$-th stage, as $N\ge \epsilon_n$, the algorithm gives ${\mathcal{L}}^{(N)}$ such that ${\mathcal{L}}^{(N)}\!\!\mid_n ={\mathcal{L}}^{(\epsilon_n)}\!\!\mid_n= L\!\!\mid_n \sqsubset L$. At the $\epsilon_N$-stage, on account of $N\ge \delta_n$ and ${\mathcal{L}}^{(\epsilon_N)}_N=L\!\!\mid_N$, the reordering routine applied to ${\mathcal{L}}^{(\epsilon_N)}_N$ gives ${\mathcal{Q}}^{(N)}\!\!\mid_n ={\mathcal{Q}}^{(\delta_n)}\!\!\mid_n\sqsubset H$.

\textbf{An Alternative Formulation.} In order to construct a chain of submatrices of a QHF of $C$, the uniqueness of ${\mathcal{Q}}^{(N)}\!\!\mid_n$ provides the following alternative. Any Gauss-Jordan elimination scheme, including the latest software innovations such as partial pivoting techniques, when implemented with RPS on a top submatrix $C\!\!\mid_N$ of $C$ gives a matrix ${\mathcal{Q}}^{(N)}$ in LRREF. If $N$ is sufficiently large ${\mathcal{Q}}^{(N)}$ contains a submatrix ${\mathcal{Q}}^{(N)}\!\!\mid_n$ of a QHF of $C$, since the set of all zero rows grouped on the top of ${\mathcal{Q}}^{(N)}$ is finite. Thereafter, the infinite Gauss-Jordan algorithm, supplemented by the reordering routine, is applied to the matrix ${\mathcal{Q}}^{(N)}$ augmented by successive rows of $C$, thus generating a chain of submatrices of a QHF of $C$, as described by (\ref{submatrices of QHF}).
\subsection{Examples on the Construction of QHF}
\label{sec:Examples}
The infinite Gauss-Jordan algorithm is implemented with RPS to row reduce two additional concrete examples. Example \ref{example2} was treated by recursion in \cite{Fu:Th} and the same results are recovered here. Special attention is paid to the conditions which make the recurrence (\ref{rec-Hermite}) applicable. In Example \ref{example3} the matrix representation of a partial differential operator is considered. In this event, condition (\ref{cond-Hermite2}) is not satisfied, making impossible a direct application of (\ref{rec-Hermite}).
\begin{example} \label{example2} {\rm
Following Fulkerson, we define the row-finite $\omega\times\omega$ matrix:
\begin{equation}
\label{firstmtr}
A= \left( \begin{array}{cccccccccccc} 
0 & 0 & 1 & 1 & 0 & 0 & 0 & 0 & 0 & 0 & ... \\
0 & 0 & 0 & 0 & 0 & 0 & 0 & 0 & 0 & 0 &... \\
0 & 0 & 0 & 1 & 0 & 1 & 1 & 0 & 0 & 0 &... \\
0 & 0 & 1 & 3 & 0 & 2 & 2 & 0 & 0 & 0 &... \\
0 & 0 & 0 & 1 & 0 & 0 & 1 & 0 & 1 & 1 &... \\
0 & 0 & 1 & 5 & 0 & 1 & 4 & 0 & 3 & 3 &...\vspace{-0.07in}\\
.&.&.&.&.&.&.&.&.&.&...
\end{array} \right)
\end{equation} 
The first three rows of $A$ are given by $A_{0}=\mbox{\boldmath $\mathrm{e}$}_{2}+\mbox{\boldmath $\mathrm{e}$}_{3}, A_{1}=\textbf{0}, A_{2}=\mbox{\boldmath $\mathrm{e}$}_{3}+\mbox{\boldmath $\mathrm{e}$}_{5}+\mbox{\boldmath $\mathrm{e}$}_{6}$, and the remaining rows are generated by the formulas:
\begin{equation*} \begin{array}{lll}
A_{2n+1}&=&\displaystyle (n+1)A_{2n}+\sum_{i=0}^{n-1}A_{2i}, \ \ \ {\rm for} \ n\ge 1    \vspace{0.05in} \\
A_{2n}&=&\mbox{\boldmath $\mathrm{e}$}_{3}+\mbox{\boldmath $\mathrm{e}$}_{6}+\mbox{\boldmath $\mathrm{e}$}_{3n+2}+\mbox{\boldmath $\mathrm{e}$}_{3(n+1)}, \ \ \ {\rm for} \ n\ge 2   
\end{array}
\end{equation*}
Let $\alpha\in\Hom$ be the linear mapping induced by $A$, thus $A=[\alpha]^{\mbox{\scriptsize \boldmath $\mathrm{E}$}}_{\mbox{\scriptsize \boldmath $\mathrm{E}$}}$. The implementation of the infinite Gauss-Jordan algorithm with RPS, including the reordering part, results in a QHF of $A$, say $H$. By applying the same sequence of row operations to $\mbox{\boldmath $\mathrm{E}$}$, which have occurred in the row reduction of $A$ to $H$, the domain basis $\mbox{\boldmath $\mathrm{L}$}=\{\mbox{\boldmath $\mathrm{L}$}_n\}_{n\in \omega}$ is derived. In what follows, the elements of $\mbox{\boldmath $\mathrm{L}$}$ are appended next to corresponding rows of $H$ occupying the right-end column:  
\[H\!\!=\!\!\left(
\begin{array}{llrllrlllllllll}
 0 & 0 & 1 & 1 & 0 & 0 & 0 & 0 & 0 & 0 & 0 & 0 &... \\
 0 & 0 & 0 & 0 & 0 & 0 & 0 & 0 & 0 & 0 & 0 & 0 &...\\
 0 & 0 & -1 & 0 & 0 & 1 & 1 & 0 & 0 & 0 & 0 & 0 &  ...\\
 0 & 0 & 0 & 0 & 0 & 0 & 0 & 0 & 0 & 0 & 0 & 0 & ...\\
 0 & 0 & 0 & 0 & 0 &-1 & 0 & 0 & 1 & 1 & 0 & 0 &... \\
 0 & 0 & 0 & 0 & 0 & 0 & 0 & 0 & 0 & 0 & 0 & 0 &...\\
 0 & 0 & 0 & 0 & 0 &-1 & 0 & 0 & 0 & 0 & 1 & 1 & ...\\
  .&.&.&.&.&.&.&.&.&.&.&.&...
\end{array}
\right)
\left|\begin{array}{l}
\mbox{\boldmath $\mathrm{e}$}_{0} \\ 
\mbox{\boldmath $\mathrm{e}$}_{1} \\ 
\mbox{\boldmath $\mathrm{e}$}_{2}-\mbox{\boldmath $\mathrm{e}$}_{0}\\ 
\mbox{\boldmath $\mathrm{e}$}_{3}-2\mbox{\boldmath $\mathrm{e}$}_{2}-\mbox{\boldmath $\mathrm{e}$}_{0} \\ 
\mbox{\boldmath $\mathrm{e}$}_{4}-\mbox{\boldmath $\mathrm{e}$}_{2}\\ 
\mbox{\boldmath $\mathrm{e}$}_{5}-3\mbox{\boldmath $\mathrm{e}$}_{4}-\mbox{\boldmath $\mathrm{e}$}_{2}-\mbox{\boldmath $\mathrm{e}$}_{0} \\
\mbox{\boldmath $\mathrm{e}$}_{6}-\mbox{\boldmath $\mathrm{e}$}_{2} \\  
...
\end{array}\right. 
\]
The defining relation of $(A_{2n+1})_{n\ge 1}$ is $A_{2n+1}-(n+1)A_{2n}-\sum_{i=0}^{n-1}A_{2i}=\textbf{0}$. On account of $\alpha(\mbox{\boldmath $\mathrm{e}$}_{k})=A_k$, we deduce the following alternative calculation of a basis of $\NS(A)$: 
$\mbox{\boldmath $\mathrm{L}$}_{1}=\mbox{\boldmath $\mathrm{e}$}_{1}, \ \mbox{\boldmath $\mathrm{L}$}_{2n+1}= \mbox{\boldmath $\mathrm{e}$}_{2n+1}-(n+1)\mbox{\boldmath $\mathrm{e}$}_{2n}-\sum_{i=0}^{n-1}\mbox{\boldmath $\mathrm{e}$}_{2i}, \ n\ge 1$;
in full accord with the result derived by the infinite Gauss-Jordan algorithm. The non-singular matrix of passage from $\mbox{\boldmath $\mathrm{E}$}$ to $\mbox{\boldmath $\mathrm{L}$}$ is given by
\[Q=\left(
\begin{array}{rlrlrrllll}
 1 & 0 & 0 & 0 & 0 & 0 & 0 & ... \\
 0 & 1 & 0 & 0 & 0 & 0 & 0 &...\\
 -1 & 0 & 1 & 0 & 0 & 0  & 0 & ...\\
 -1 & 0 & -2 & 1 & 0 & 0 & 0 & ...\\
 0 & 0 & -1 & 0 & 1 & 0 & 0 & ... \\
 -1 & 0 &-1 & 0 & -3 & 1 & 0 & ...\\
 0 & 0 & -1 & 0 & 0 & 0 & 1 & ... \\
  .&.&.&.&.&.&.&...
\end{array}\right)
\]
consisting of the coefficients of the vectors in $\mbox{\boldmath $\mathrm{L}$}$. It is easily verified that $H=Q \cdot A$ and $A\sim H$. It is worth noting the significantly less amount of automatic work in calculating simultaneously the above shown parts of $H,Q$ compared to the corresponding amount of manual work in \cite{Fu:Th}.

Let us next verify that $A$ meets the condition (\ref{cond-Hermite2}) to (\ref{rec-Hermite}). Since the map of row-length of $A$ is not injective, Theorem \ref{theorem complete-basis} implies that the set of nonzero rows of $A$ is not a complete basis of row-length of $\RS(A)$. However, the defining relation of $A_{2n+1}$ entails that every odd row of $A$ for $n\geq 1$ is a linear combination of the even rows $\{A_{2i}\}_{0\le i\leq n}$ of $A$. Thus $\Span(\{A_{2n}\}_{n\in \omega})={\rm RS}(A)$. Furthermore, as $\maxs(A_{2n})=3(n+1)$ for all $n\in \omega$, we infer that $\{\rho_{2n}\}_{n\in \omega}$ is strictly increasing. In particular, the set of length-equivalence classes of $\RS(A)$ is $\Class =\{ \Class_{3(n+1)}\}_{n\in \omega}$ and $A_{2n}\in \Class_{3(n+1)}$ for all $n\in \omega$. Corollary \ref{corollary_1.4} implies that the set of even rows of $A$ is a complete basis of row-length of ${\RS}(A)$. As a conclusion $\{A_{2n}\}_{n\in \omega}$, is available in advance and so Fulkerson's recurrence is directly applicable. By setting $\mbox{\boldmath$\mathrm{A}$}_{\rho_{2n}}=A_{2n}$ in (\ref{rec-Hermite}) we obtain
\begin{equation*} \begin{array}{l} H_0=A_0, H_2=A_2-A_0,..., H_{2n}=A_{2n}-H_2-H_0=A_{2n}-A_2, \ n\ge 2, \ {\rm or} \\ 
H_0=\mbox{\boldmath $\mathrm{e}$}_{2}+\mbox{\boldmath $\mathrm{e}$}_{3},  \ H_2=-\mbox{\boldmath $\mathrm{e}$}_2+\mbox{\boldmath $\mathrm{e}$}_5+\mbox{\boldmath $\mathrm{e}$}_6,...,H_{2n}=-\mbox{\boldmath $\mathrm{e}$}_2+\mbox{\boldmath $\mathrm{e}$}_{6}+\mbox{\boldmath $\mathrm{e}$}_{3n+2}+\mbox{\boldmath $\mathrm{e}$}_{3(n+1)}
\end{array}
\end{equation*}
which coincide with the corresponding nonzero rows of $H$.}
\end{example} 
In the sequel, we shall adopt the notation $I=\omega\times \omega$ coupled with the standard well ordering on $I$ defined by
\[ (i,j)\prec_1 (n,m) \Longleftrightarrow \ \mbox{$i+j<n+m$}\ \rm{or} \ (\mbox{$i+j=n+m$} \ \rm{and} \ \mbox{$j<m$}). \]
The elements of $I$ are listed, relative to, $\prec_1$ as follows: $(0,0)\prec_1 (1,0)\prec_1 (0,1)\prec_1 (2,0)\prec_1 (1,1)\prec_1 (0,2)\prec_1 (3,0),...$ and the ordinal number associated with $(I, \prec_1 )$ is $\omega$. Let us call ${\mathcal{Y}}$ the standard basis of the space of bivariate polynomials equipped with the well ordering $\prec_1$. Formally ${\mathcal{Y}}= \{1, x, y, x^{2}, xy, y^{2}, x^{3}, x^{2}y, xy^{2},y^{3},...\}$; in which the terms are listed with respect to $\prec_1$. We shall also use the following well ordering on $I$: 
\begin{equation*} 
(i,j)\prec_2 (n,m) \Longleftrightarrow \left\{\begin{array}{l} \mbox{$i+j<n+m$} \ \ {\rm or} \\
                                                           \mbox{$i+j=n+m$}  \ \rm{and} \ \mbox{$n+m$} \  \rm{is \  odd} \ \rm{and} \  \mbox{$i<n$}  \ \ {\rm or} \\
                                                           \mbox{$i+j=n+m$}  \ \rm{and} \  \mbox{$n+m$} \  \rm{is \ even} \ \rm{and} \ \mbox{$j<m$} 
                                              
   \end{array} \right.
\end{equation*}
The elements of $I$ are listed with respect to $\prec_2$ as follows: $(0,0)\prec_2 (0,1)\prec_2 (1,0)\prec_2 (2,0)\prec_2 (1,1)\prec_2 (0,2)\prec_2 (0,3),...$ and the ordinality of $(I, \prec_2)$ is $\omega$. Let us call ${\mathcal{X}}$ the standard basis of bivariate polynomials, but now equipped with the ordering $\prec_2$, that is ${\mathcal{X}}= \{1, y, x, x^{2}, xy, y^{2}, y^{3}, y^{2}x, yx^{2},x^{3},x^4...\}$.
\begin{example}\label{example3} {\rm   Let ${\mathcal{P}}$ be the space of bivariate polynomials. Let also $D$ be the partial differential operator
    $D:= (x^{2}+xy+y^{2}) \frac{\partial^{2}}{\partial x \partial y}+xy(\frac{\partial}{\partial x}+\frac{\partial}{\partial y})$
considered as an endomorphism of ${\mathcal{P}}$. The range of $D$ is spanned by:
\begin{equation*} \label{eq-ex21}
D(x^{n}y^{m})= nmx^{n+1}y^{m-1}+ nmx^{n}y^{m}+nmx^{n-1}y^{m+1}+mx^{n+1}y^{m}+nx^{n}y^{m+1}
\end{equation*}
The matrix representation of $D$, relative to $({\mathcal{X}}, {\mathcal{Y}})$, is displayed in what follows:
\begin{equation} \label{matrix-M} 
M\! =\!\left(
 \begin{array}{lllllllllllllllll}
 0 & 0 & 0 & 0 & 0 & 0 & 0 & 0 & 0 & 0 & 0 & 0 & 0 & 0 &\!\! ...\\
 0 & 0 & 0 & 0 & 1 & 0 & 0 & 0 & 0 & 0 & 0 & 0 & 0 & 0 &\!\! ...\\
 0 & 0 & 0 & 0 & 1 & 0 & 0 & 0 & 0 & 0 & 0 & 0 & 0 & 0 &\!\! ...\\
 0 & 0 & 0 & 0 & 0 & 0 & 0 & 2 & 0 & 0 & 0 & 0 & 0 & 0 &\!\! ...\\
 0 & 0 & 0 & 1 & 1 & 1 & 0 & 1 & 1 & 0 & 0 & 0 & 0 & 0 &\!\! ...\\
 0 & 0 & 0 & 0 & 0 & 0 & 0 & 0 & 2 & 0 & 0 & 0 & 0 & 0 &\!\! ...\\
 0 & 0 & 0 & 0 & 0 & 0 & 0 & 0 & 0 & 0 & 0 & 0 & 0 & 3 &\!\! ...\\
 0 & 0 & 0 & 0 & 0 & 0 & 0 & 2 & 2 & 2 & 0 & 0 & 2 & 1 &\!\! ...\\
 0 & 0 & 0 & 0 & 0 & 0 & 2 & 2 & 2 & 0 & 0 & 1 & 2 & 0 &\!\! ...\\
 0 & 0 & 0 & 0 & 0 & 0 & 0 & 0 & 0 & 0 & 0 & 3 & 0 & 0 &\!\! ...\vspace{-0.05in}\\
 .&.&.&.&.&.&.&.&.&.&.&.&.&.&\!\! ...\\
\end{array}
\right)
\end{equation}
The infinite Gauss-Jordan algorithm, implemented with RPS and including the reordering part, results in a QHF of $M$:
\begin{equation} \label{matrix-H}
H=\left(
\begin{array}{llllllrlllllllll}
 0 & 0 & 0 & 0 & 0 & 0 &\!\! 0 & 0 & 0 & 0 & 0 & 0 & 0 & 0 & \!\!\!\! ...\\
 0 & 0 & 0 & 0 & 1 & 0 &\!\! 0 & 0 & 0 & 0 & 0 & 0 & 0 & 0 & \!\!\!\! ...\\
 0 & 0 & 0 & 0 & 0 & 0 &\!\! 0 & 0 & 0 & 0 & 0 & 0 & 0 & 0 &\!\!\!\! ... \\
 0 & 0 & 0 & 1 & 0 & 1 &\!\! 0 & 0 & 0 & 0 & 0 & 0 & 0 & 0 & \!\!\!\! ...\\
 0 & 0 & 0 & 0 & 0 & 0 &\!\! 0 & 1 & 0 & 0 & 0 & 0 & 0 & 0 & \!\!\!\! ...\\
 0 & 0 & 0 & 0 & 0 & 0 &\!\! 0 & 0 & 1 & 0 & 0 & 0 & 0 & 0 &\!\!\!\! ...\\
 0 & 0 & 0 & 0 & 0 & 0 &\!\! -1 & 0 & 0 & 1 & 0 & 0 & 0 & 0&\!\!\!\! ...\\
 0 & 0 & 0 & 0 & 0 & 0 &\!\! 0 & 0 & 0 & 0 & 0 & 1 & 0 & 0 &\!\!\!\! ...\\
 0 & 0 & 0 & 0 & 0 & 0 &\!\! 1 & 0 & 0 & 0 & 0 & 0 & 1 & 0 &\!\!\!\! ...\\
 0 & 0 & 0 & 0 & 0 & 0 &\!\! 0 & 0 & 0 & 0 & 0 & 0 & 0 & 1 & \!\!\!\! ...\vspace{-0.07in}\\
.&.&.&.&.&.&.&.&.&.&.&.&.&.&\!\!\!\! ...
\end{array}\right)
\end{equation}
By applying the same sequence of row operations to the identity $\omega\times\omega$ matrix $\textbf{I}$, the derived non-singular matrix of passage from ${\mathcal{X}}$ to $\mbox{\boldmath $\mathrm{L}$}$ is given by
\[Q=\left(
\begin{array}{lrlrlrrrrrr}
1 & 0 & 0 & 0 & 0 & 0 & 0 & 0 & 0 & 0 &... \vspace{0.02in}\\ 
0 & 1 & 0 & 0 & 0 & 0 & 0 & 0 & 0 & 0 & ... \vspace{0.02in}\\
0 & -1 & 1 & 0 & 0 & 0 & 0 & 0 & 0 & 0 & ... \vspace{0.02in}\\
0 & -1 & 0 & -\frac{1}{2} & 1 & -\frac{1}{2} & 0 & 0 & 0 & 0 & ... \vspace{0.02in}\\
0 & 0 & 0 & \frac{1}{2} & 0 & 0 & 0 & 0 & 0 & 0 & ... \vspace{0.02in}\\
0 & 0 & 0 & 0 & 0 & \frac{1}{2} & 0 & 0 & 0 & 0 & ... \vspace{0.02in}\\
0 & 0 & 0 & 0 & 0 & 0 & -\frac{1}{6} & \frac{1}{2} & -\frac{1}{2} & \frac{1}{6}&... \vspace{0.02in}\\
0 & 0 & 0 & 0 & 0 & 0 & 0 & 0 & 0 & \frac{1}{3}& ... \vspace{0.02in}\\
0 & 0 & 0 & -\frac{1}{2} & 0 & -\frac{1}{2} & 0 & 0 & \frac{1}{2} & -\frac{1}{6}&... \vspace{0.02in}\\
0 & 0 & 0 & 0 & 0 & 0 & \frac{1}{3} & 0 & 0 & 0 & ... \vspace{-0.07in} \\
.&.&.&.&.&.&.&.&.&.&...
\end{array}
\right)
\] 
It is easily verified that $Q\cdot M=H$ and $M\sim H$.

As an illustration of the results of Subsection \ref{sec:ConstructionViaInfiniteGaussJordanAlgorithm} concerning the extended infinite Gauss-Jordan algorithm, it is noted that:
i) The smallest $N$ such that  ${\mathcal{L}}^{(N)}\!\!\mid_6 \sqsubset L$ and ${\mathcal{Q}}^{(N)}\!\!\mid_6 \sqsubset H$ is $N=\delta_6=\epsilon_6=9$. ii) ${\mathcal{Q}}^{(9)}\!\!\mid_6 \sqsubset{\mathcal{Q}}^{(9)}\!\!\mid_7 \sqsubset{\mathcal{Q}}^{(9)}\!\!\mid_8 \sqsubset{\mathcal{Q}}^{(9)}\!\!\mid_9 \sqsubset H$, thus $\delta_9=\epsilon_9=9$ and $H|_9={\mathcal{Q}}^{(9)}$, as displayed in (\ref{matrix-H}). This justifies the size of the initial matrix $M|_9$ chosen to display the results of this example.

Since $\nul(M)=2$, row permutations can be applied to $L$ so as to group zero rows on the top of the matrix, thus generating the LRREF of $M$. This same result is obtained by applying any Gauss-Jordan elimination routine with RPS, directly to the top submatrix $M|_9$ of $M$.

We observe that the map of row-length associated with $M$ is not injective. By comparing (\ref{matrix-M}) with (\ref{matrix-H}), we further notice the appearance of new row lengths in (\ref{matrix-H}) occupying the columns of $H$ of index $5, 9,...$. Unlike the previous example, it turns out that no subset of the set of the nonzero rows of $M$, is a complete basis of  row-length of $\RS(M)$. Therefore, the condition (\ref{cond-Hermite2}) is not fulfilled, that is to say, the recurrence (\ref{rec-Hermite}) is not directly applicable to $M$.}
\end{example}
\section{Infinite Systems of Linear Equations}
\label{sec:InfiniteSystemsOfLinearEquations}
In connection with the works of Toeplitz~\cite{To:Auf} and Fulkerson~\cite{Fu:Th}, the infinite Gauss-Jordan algorithm makes it possible to treat infinite linear systems with coefficient matrix in $\RFM$ as finite linear systems. The problem of the existence and uniqueness of solutions of linear systems of the above-mentioned type is treated in a purely algebraic manner along with the full construction of the general solution of the system. In this setting, the coefficient matrix is used as a left operator on the space ${\mathcal{F}}^{\omega}$ (or ${\mathcal{F}}^{\infty}$) of infinite sequences. As matrix multiplication requires, the elements of ${\mathcal{F}}^{\omega}$ are represented by column vectors and the induced linear mapping is given by $\alpha: \xi\mapsto \alpha(\xi)= A\cdot\xi$, thus being an endomorphism of ${\mathcal{F}}^{\omega}$.

Let $A\in \RFM$ and $x,c\in {\mathcal{F}}^{\omega}$. The infinite linear system is of the form:
\begin{equation} \label{initial-system}
A\cdot x= c.
\end{equation}
The extended Gauss-Jordan elimination algorithm, implemented with RPS and including the reordering part, results in a QHF of $A$, say $H$. As $A\sim H$, there is a non-singular $\omega\times\omega$ matrix, say $Q$, such that $Q\cdot A=H$. On account of
\[
A\cdot x = c \Longleftrightarrow  Q\cdot (A \cdot x)= Q\cdot c \Longleftrightarrow (Q\cdot A) \cdot x= Q\cdot c \Longleftrightarrow H\cdot x= Q\cdot c
\]
the system takes the equivalent form:
\begin{equation} \label{equivalent-system}
H\cdot x= Q\cdot c.
\end{equation}
In the context of the results of this paper, a simple alternative form of the general solution of (\ref{initial-system}) is formulated in what follows.

\subsection{Homogeneous Solution}
\label{sec:HomogeneousSolution} 
Let $W,J$ be the indexing sets of zero and nonzero rows of $H=(h_{ki})_{(k,i)\in \omega\times\omega}$, respectively, and $\{\rho_j\}_{j\in J}$ be the strictly increasing sequence of row-length of $H$. Let $\defic(A)$ be the codimension of $\RS(A)$, called the \emph{deficiency or defect} of $A$. Since the nonzero rows of $H$, form a sequence of complete row-length (Hermite basis), Corollary \ref{Corollary_1.5} (ii) implies that the subset of $\mbox{\boldmath $\mathrm{E}$}$ indexed by $\omega\setminus \Right(H)$ spans a complementary space of $\RS(A)$, that is $\defic(A)=\card(\omega\setminus \Right(H))$.
If $c=\textbf{0}$, in view of (\ref{equivalent-system}), the homogeneous system (\ref{initial-system}) is equivalent to the system $H\cdot x= 0$.
The sequence $x_{\mathcal {H}}=(\lambda_m)^T_{m\in \omega}$ in ${\mathcal{F}}^{\omega}$, where the ``$T$" stands for transposition, is defined by
\[
  \lambda_m=\left\{\begin{array}{ccl} t_m  & ({\rm free\ variable})  & {\rm if} \ m\in \omega\setminus \Right(H) \\
  \\
\displaystyle                     -\sum^{\rho_i-1}_{\substack{ k=0 \\ k\not\in \Right(H)}}  h_{ik} t_k &  & {\rm if } \ m=\rho_i\in \Right(H)      
\end{array}\right.
\]
whereas the terms $t_k$ in the sum of the second branch were previously chosen arbitrarily in the first branch. Since $H$ is in QHF: $h_{ik}=0$ for $k\in \Right(H)$ and $k<\rho_i$, $h_{i\rho_i}=1$ and $h_{ik}=0$ for $k>\rho_i$. Thus for every $i\in J$
\[
\displaystyle H_i\cdot x_{\mathcal {H}}=\sum^{\rho_i}_{k=0} h_{ik}\lambda_k =\sum^{\rho_i-1}_{k=0} h_{ik}\lambda_k +h_{i\rho_i}\lambda_{\rho_i}=\sum^{\rho_i-1}_{\substack{ k=0 \\ k\not\in \Right(H)}}  h_{ik} t_k-\sum^{\rho_i-1}_{\substack{ k=0 \\ k\not\in \Right(H)}}  h_{ik} t_k=0.          
\] 
Also $H_i\cdot x_{\mathcal {H}}=0$ for all $i\in W$ and so $H\cdot x_{\mathcal {H}}=\textbf{0}$. Consequently $x_{\mathcal {H}}$ is the homogeneous solution of (\ref{initial-system}). Certainly the number of variables in $x_{\mathcal {H}}$ equals the $\defic(A)$. Thus nontrivial homogeneous solutions exist if and only if $\defic(A)>0$.
\subsection{General Solution}
\label{sec:GeneralSolution}
In view of (\ref{equivalent-system}), let us call $k:=Q\cdot c\in {\mathcal{F}}^{\omega}$. Then the non-homogeneous system (\ref{initial-system}) is equivalent to 
\begin{equation} \label{nonhomogeneous-system}
H\cdot x= k.
\end{equation}
Since $H_w=\textbf{0}$ for all $w\in W$, it follows that $H_w\cdot x=0$ for all $x\in {\mathcal{F}}^{\omega}$ and $w\in W$. On account of $H_w\cdot x=k_w$ a necessary condition for the existence of solutions of (\ref{nonhomogeneous-system}) is 
\begin{equation} \label{condition-particular-solution}
k_w=0, \ \  {\rm for \ all} \ \ w\in W.
\end{equation}
If $J=\omega\setminus W$, without loss of generality, we may assume that $J=\{j_0,j_1,...\}$ with $j_0<j_1<...$. Let us define 
\begin{equation} \label{particular-solution}
\begin{array} {llll}
x_{\mathcal {P}}=(0,0,...,0,\!\!\!\!\!& k_{j_0},0,0,...,0,\!\!\!\!\!& k_{j_1},0,...,0,\!\!\!\!\!&k_{j_i},0,0,...)^T \\
               & \uparrow                  & \uparrow                          & \uparrow        \\
               & \rho_0                    & \rho_1                            & \rho_i
               \end{array}  
\end{equation}
where $k_{j_i}$ has the position $\rho_i$. As $h_{i\rho_j}=0$ for $j<i$, $h_{i\rho_i}=1$ and  $h_{im}=0$ for all $m>\rho_i$, we deduce
\[
H_i\cdot x_{\mathcal {P}}=h_{i\rho_0}k_{j_0}+...+h_{i\rho_i}k_i= h_{i\rho_i}k_i=k_i                              
\]
for all $i\in J$. Assuming that $k_w=0$ for all $w\in W$ in (\ref{nonhomogeneous-system}), then $x=x_{\mathcal {P}}$ satisfies (\ref{nonhomogeneous-system}) and so $x_{\mathcal {P}}$ in (\ref{particular-solution}) is a particular solution of (\ref{initial-system}). Accordingly, condition (\ref{condition-particular-solution}) is also sufficient. This means that the system (\ref{initial-system}) is consistent if and only if $k_w=0$ for all $w\in W$. The general solution $x=(x_m)^T_{m\in \omega}$ of (\ref{initial-system}) is $x=x_{\mathcal {P}}+x_{\mathcal {H}}$, that is
\begin{equation} \label{general-solution}
            x_m=\left\{\begin{array}{ccl} t_m  & ({\rm free\ variable}) & {\rm if} \  m\in \omega\setminus \Right(H) \\
\\
\displaystyle                      k_{j_i}-\sum^{\rho_i-1}_{\substack{ k=0 \\ k\not\in \Right(H)}}h_{mk}t_k  &  & {\rm if } \ m=\rho_i\in \Right(H)      
\end{array}\right.    
\end{equation}

\subsection{Examples on Infinite Linear Systems}
\label{sec:ExamplesOnInfiniteSystems}
The homogeneous solutions of corresponding infinite linear systems with coefficient matrices defined in (\ref{ex1.matr2}), (\ref{firstmtr}) and (\ref{matrix-M}) are respectively given by
\[\begin{split}x_{\mathcal {H}}=& (t_0,-t_0, t_0,...)^T\\
               x_{\mathcal {H}}=& (t_0,t_1,t_2,-t_2,0,t_3, t_2-t_3,0,t_4,t_3-t_4, t_5, t_3-t_5,...)^T\\
               x_{\mathcal {H}}=& (t_0,t_1,t_2,t_3,0,-t_3,t_4,0,0,t_4,t_5,-t_4,0,...)^T,
\end{split}
\]
where $t_0,t_1,t_2,...$ are free variables in $\mathcal{F}$.

Using the notation of Example \ref{example1}, the non-homogeneous infinite linear system $C\cdot x=s$ with coefficient matrix defined in (\ref{ex1.matr2}) is consistent for any right-hand-side sequence $s=(s_i)^T_{i\in \omega}\in {\mathcal{F}}^{\omega}$, since $H$ in (\ref{eq-ex2.matr2}) has no zero rows. The components of $k=Q\cdot s$ are given below:
\[k_0=s_0, k_1=-s_0+s_1, k_2=s_0-s_1+s_2,... \]
In view of (\ref{particular-solution}), a particular solution of the system is $x_{\mathcal {P}}=(0,k_0,k_1,k_2,...)$ and so the general solution is of the form 
\[x=(t_0,s_0-t_0, -s_0+s_1+t_0, s_0-s_1+s_2+t_0...)^T,\]
where $t_0$ is a free variable in $\mathcal{F}$.

Following the notation of Example \ref{example2}, let us finally consider the non-homogeneous system $A\cdot x=c$ with coefficient matrix $A$ defined in (\ref{firstmtr}). Let $c=(c_i)^T_{i\in \omega}\in {\mathcal{F}}^{\omega}$. The components of $k=Q\cdot c$ are given by
\[\begin{split} k_0&=c_0, k_1=c_1, k_2=-c_0+c_2, k_3=-c_0-2c_2+c_3, k_4=-c_2+c_4,\\
                k_5&=-c_0-c_2-3c_4+c_5, k_6=-c_2+c_6,...  
                \end{split}
                \]
On account of $k_{2n+1}=0$ for $n\in \omega$, since $H_{2n+1}=0$ for all $n\in \omega$, we conclude that
\[c_1=0, c_3=c_0+2c_2, c_5=c_0+c_2+3c_4,... \]
Thus the form of $c$ so as the non-homogeneous system becomes consistent must be $c=(c_0, 0, c_2, c_0+c_2, c_4, c_0+c_2+3c_4, c_6,...)^{T}$, where $c_0,c_2,c_4,...$ are free variables in $\mathcal{F}$. According to (\ref{particular-solution}), a particular solution of the system is of the form:
\[x_{\mathcal {P}} =(0, 0, 0, c_0, 0, 0,-c_0+c_2, 0, 0, -c_2+c_4, 0, -c_2+c_6,...)^{T}. \]
Finally, in view of (\ref{general-solution}) the general solution of this system is given by
\[\begin{split}
   x=(&t_0,t_1,t_2,c_0-t_2,0,t_3, -c_0+c_2+t_2-t_3, 0, t_4, -c_2+c_4+t_3-t_4, \\
      & t_5, -c_2+c_6+t_3-t_5,...)^{T},
\end{split}
\]
where $t_0,t_1,t_2,...$ are free variables in $\mathcal{F}$.


\begin{thebibliography}{1}
\bibitem{Ha:Ch} Hart, R., The Chinese Roots of Linear Algebra, Johns Hopkins University Press (2010).
\bibitem{Gr:GE} Grcar, Joseph F., How ordinary elimination became Gaussian elimination, Historia Math. 38 (2011), no. 2, 163–218.
\bibitem{To:Auf} Toeplitz, O.,  \"{U}ber die Aufl\"{o}sung Unendichveiler Linearer mit unendlicheveilen Unbekannten, Pal. Rend., Vol. 28 (1909), 88-96.
\bibitem{Fu:Th} Fulkerson, D. R., \emph{Quasi-Hermite Forms of Row-finite Matrices}, Ph.D Thesis, Univercity of Wisconsin (1951), (available at http://www.researchgate.net/publication/36218809)
\bibitem{Os:Pr} Osofsky, Barbara L., Projective dimension is a lattice invariant, ePrint arXiv:math/0007091 (2000) . 
\bibitem{Pa:reap} Paraskevopoulos, A. G., A recursive approach to the solution of abstract linear equations and the Tau Method, Computers Maths. Applic., vol. \textbf{47}, No 10-11 (2004), 1753-1774.
\bibitem{Or:tau} Ortiz, E. L., The Tau Method, SIAM J. Numer. Anal. \textbf{6} (1969), 480-491.
\bibitem{AF:rings} Anderson, F. W., Fuller, K. R., \emph{Rings and Categories of Modules}, sec. ed., Springer (1992).
\bibitem{Bou:Alg} Bourbaki, N., \emph{Elements of Mathematics, Algebra I}, Hermann, Paris (1974).
\end{thebibliography}
\end{document}